\title[Morawetz estimate for linearized gravity in Schwarzschild]{Morawetz estimate for linearized gravity in Schwarzschild}
\author[L. Andersson]{Lars Andersson} \email{laan@aei.mpg.de}
\address{Albert Einstein Institute, Am M\"uhlenberg 1, D-14476 Potsdam,
  Germany 
}
\author[P. Blue]{Pieter Blue} \email{P.Blue@ed.ac.uk}
\address{The School of Mathematics and the Maxwell Institute,  University of Edinburgh,
Edinburgh, Scotland, UK}
\author[J. Wang]{Jinhua Wang} \email{wangjinhua@xmu.edu.cn}
\address{School of Mathematical Sciences, Xiamen University, Xiamen 31005, China}
\newcounter{mnotecount}[section]
\newcounter{mymnotecount}[section]
\renewcommand{\themymnotecount}{\thesection.\arabic{mymnotecount}}
\newcommand{\mymnote}[1]{\protect{\stepcounter{mymnotecount}}${\raisebox{0.5\baselineskip}[0pt]{\makebox[0pt][c]{\color{magenta}{\tiny\em$\bullet$\themymnotecount}}}}$\marginpar{\raggedright\tiny\em$\!\!\!\!\!\!\,\bullet$\themymnotecount: \blue{#1}}\ignorespaces}
\renewcommand{\mymnote}[1]{}
\newtheorem{theorem}{Theorem}
\newtheorem{lemma}[theorem]{Lemma}
\newtheorem{remark}[theorem]{Remark}
\newtheorem{corollary}[theorem]{Corollary}
\numberwithin{equation}{section} 
\numberwithin{theorem}{section}
\newcounter{step}
\newcommand{\Naturals}{\mathbb{N}}
\newcommand{\Indicator}{\mathbf{1}}
\newcommand{\KDelta}{\Delta}
\newcommand{\pv}{\partial_v}
\newcommand{\pu}{\partial_u}
\newcommand{\dr}{\partial_r}
\newcommand{\dt}{\partial_t}
\newcommand{\p}{\partial}
\def\laplacianslash{\mbox{$\triangle \mkern -13mu /$ \!}}
\def\doubleint{\int\!\!\!\!\!\int}
\def\nablaslash{\mbox{$\nabla \mkern -13mu /$ \!}}
\def\laplacianslash{\mbox{$\triangle \mkern -13mu /$ \!}}
\newcommand{\di}{\mathrm{d}} 
\newcommand{\vecfont}[1]{\mathbf{#1}}
\newcommand{\tensorfont}[1]{#1}
\newcommand{\solu}{\psi}
\newcommand{\Solu}{\Psi}
\newcommand{\gMetric}{\tensorfont{g}}
\newcommand{\M}{\mathcal{M}}
\newcommand{\scri}{\mathcal{I}^+}
\newcommand{\hori}{\mathcal{H}^+}
\newcommand{\N}{\mathcal{N}}
\newcommand{\So}{ \Psi^{(1)} }
\newcommand{\psifirst}{\psi^{(1)}}
\newcommand{\D}{\mathcal{D}}
\newcommand{\uu}{{\underline{U}}}
\newcommand{\CurlyU}{\mathcal{U}}
\newcommand{\newcommandMG}[3]{\newcommand{#1}{#3}} 
\newcommandMG{\fnMna} {{f_{1}}} {z}
\newcommandMG{\fnMnb} {{f_{2}}} {w}
\newcommandMG{\fnMca} {f_{1,1}}   {z_1}
\newcommandMG{\fnMcb} {f_{2,1}}   {w_1}
\newcommandMG{\fnMda} {f_{1,2}}   {z_2}
\newcommandMG{\fnMdb} {f_{2,2}}   {w_2}
\newcommand{\localiseAwayFromPhotonOrbits}{\Indicator_{r\not\eqsim 3M}}
\newcommand{\vecMclassical}{\vecfont{A}}
\newcommand{\fnMclassical}{q}
\newcommand{\fnMpclassical}{q_{\text{reduced}}}
\newcommand{\fnMncClassical}{f}
\newcommand{\CurlyALzZero}{\mathcal{A}}
\newcommand{\CurlyULzZero}{\mathcal{U}}
\newcommand{\CurlyVLzZero}{\mathcal{V}}
\newcommand{\newepsilon}[3]{\newcommand{#1}{#2}}
\newepsilon{\epsilonInvIntegratedMorawetz}{C}{C_{\text{Morawetz}}}
\begin{document}

\begin{abstract}
The equations governing the perturbations of the Schwarzschild metric satisfy the Regge-Wheeler-Zerilli-Moncrief system. Applying the technique introduced in \cite{A-Blue-wavekerr}, we prove an integrated local energy decay estimate for both the Regge-Wheeler and Zerilli equations. In these proofs, we use some constants that are computed numerically. Furthermore, we make use of the $r^p$ hierarchy estimates \cite{D-R-09-rp, S-13} to prove that both the Regge-Wheeler and Zerilli variables decay as $t^{-\frac{3}{2}}$ in fixed regions of $r$.
\end{abstract}

\maketitle
\tableofcontents

\section{Introduction}\label{introduction}

The Schwarzschild spacetime is an $1+3-$dimensional Lorentzian manifold with
 the Lorentz metric taking the following form in Boyer-Lindquist coordinates $(x^\alpha) = (t,r,\theta,\phi)$, 
\begin{align}
\gMetric_{\mu\nu} \di x^\mu \di x^\nu \nonumber ={}&{}
-\left(1-\frac{2M}{r}\right)\di t^2 +\left(1-\frac{2M}{r}\right)^{-1}\di r^2+r^2 \di \sigma_{S^2},
\label{eq:SchMetric}
\end{align}
in the exterior region which is given by $\M=\mathbb{R} \times [2M, \infty) \times S^2$.
For notational convenience, we let
\begin{equation}\label{mu-eta}
 \eta=1-\mu, \quad \mu=\frac{2M}{r}, \quad \Delta=r^2-2Mr.
\end{equation}
We use $r^\ast$ to denote the Regge-Wheeler tortoise coordinate  
\begin{equation}\label{Regge-Wheeler tortoise coordinate}
r^\ast = r + 2M \log(r-
2M)-3M-2M\log M,
\end{equation}
and use the retarded and advanced Eddington-Finkelstein coordinates $u$ and $v $ defined by $u = t - r^\ast, \,\, v = t + r^\ast$.

In the region near the event horizon $\hori$, located at $r=2M$, or inside the black hole, we are also going to consider the coordinate system $(v,r,\theta,\phi)$, where $v$ and  $r$ are defined as above. In the $(v,r,\theta,\phi)$ coordinate system the metric is
\begin{align}
\gMetric_{\mu\nu} \di x^\mu \di x^\nu \nonumber ={}&{}
-(1- \mu )\di v^2 +2\di r\di v+r^2 \di \sigma_{S^2}.
\label{eq:SchMetric-horizon}
\end{align}

The study of the equations governing the perturbations of the vacuum Schwarzschild metric was initiated by Regge-Wheeler \cite{R-W-75} and later completed by Vishveshwara \cite{V-70} and Zerilli \cite{Zerilli1970evenparity}. In fact, perturbations of odd and even parity were treated separately. The perturbations of odd parity are governed by the  Regge-Wheeler equation, which is similar to the wave equation for scalar field on the Schwarzschild manifold. Later, Zerilli considered the even case and showed, by decomposing into spherical harmonics (belonging to the different $\ell$ values), that the even parities are governed by the Zerilli equations. A gauge-invariant formulation was also carried out by Moncrief \cite{M-75-1, M-75-2} and Clarkson-Barrett \cite{C-B-03}.
In \cite{C-B-03},  Clarkson-Barrett extended the $1 + 3$ covariant perturbation formalism to a `$1 + 1 + 2$ covariant sheet' formalism by introducing a radial unit vector in addition to the timelike congruence, and decomposing all covariant quantities with respect to this. On the other hand, Dafermos-Holzegel-Rodnianski \cite{D-H-R-16} used the double null foliation of Schwarzschild spacetime to derive the $1 + 1 + 2$ covariant perturbation formalism. Bardeen and Press \cite{B-P-73} analyzed the perturbation equations using the Newman-Penrose formalism. Teukolsky \cite{Teukolsky1973}  extended this to the Kerr family and found that the extreme Newman-Penrose components satisfy the {\it Teukolsky equation}. The Bardeen Press equation is Teukolsky equation restricted to Schwarzschild case. More relations between the Bardeen-Press, Regge-Wheeler, Zerilli, and Teukolsky equations were established by Chandrasekhar \cite{Chand-75, Chand-92-bib}. 
   
We shall prove boundedness, an integrated local energy decay estimate, and pointwise decay for solutions to Regge-Wheeler equation and Zerilli equations, both of which take the form of 
\begin{equation}\label{eq-V}
 \Box_g \solu -V_g\solu = 0,  
\end{equation}
in the exterior region of the Schwartzchild spacetime. Here, $V_g$ is the Regge-Wheeler or Zerilli potential.

We briefly  recall some earlier results about linear wave on black hole spacetimes. Integrated local energy decay estimates were proved for the wave equation outside Schwarzschild black holes \cite{Blue-Soffer-wavesch,Blue-Sterbenz,D-R-09}. The existence of a uniformly bounded energy and integrated local energy decay estimates were proved for $|a|\ll M$ \cite{A-Blue-wavekerr,Dafermos-Rodnianski-smalla,Tataru-Tohaneanu} and more recently for all $|a|<M$ \cite{D--R--S-R}. In addition, there is related work by Finster-Kamran-Smoller-Yau \cite{Kerr-Smoller-decay} in the $|a|<M$ range. The red-shift effect was first used to control linear waves near the event horizon in \cite{D-R-09} (also see \cite{D-R-13}). Furthermore, Dafermos and Rodnianski \cite{D-R-09-rp} introduced an $r^p$ hierarchy, weighted estimates to prove energy decay. Using this method, Schlue \cite{S-13} improved the decay rate for linear wave in fixed regions of $r$ outside a Schwarzschild black hole to $t^{-3/2+\delta}$, and for time derivative to $t^{-2+\delta}$. This could be compared to an earlier result by Luk \cite{L-10}, where he introduced a commutator that is analogous to the scaling and derived similar decay rate. Moschidis \cite{rp-ASF-Mos} performed the $r^p$-weighted energy method to general asymptotically flat spacetimes with hyperboloidal foliation, and proved the decay rate for wave is $\tau^{-2}$ (where $\tau$ is the hyperboloidal time function) providing that an integrated local energy decay statement holds. There is much more work using different methods to improve the decay rate of linear wave. In fact, the local uniform decay rate for linear waves can be improved as $t^{-3}$, see Tataru, Donninger-Schlag-Soffer, etc. \cite{D-R-05, D-S-A-12, M-D-T-12, T-13}.
Besides, we should also mention the results by Pasqualotto \cite{P-spin-1-2}, where he proved pointwise decay for the Maxwell system in Schwarzschild spacetime and by Ma \cite{Ma17spin1Kerr}, where a uniform boundedness of energy and a Morawetz estimate are proved for each extreme Newman-Penrose component on slowly rotating Kerr background.

There has also been a lot of work on energy bounds for the linearized Einstein equation on a Schwarzschild black hole. Integrated energy decay estimates were proved for the Regge-Wheeler equaion \cite{Blue-Soffer-wavesch2}. More recently, Dafermos-Holzegel-Rodnianski \cite{D-H-R-16} exhibited the decay $t^{-1}$ for solutions to Teukolsky equations, which are deduced by exploiting the associated quantities satisfying the Regge-Wheeler type equation. Hung, Keller and Wang \cite{H-K-16, H-K-W-17} worked with the Regge-Wheeler-Zerilli-Moncrief system, and established the decay estimate $t^{-1}$ for solutions to Regge-Wheeler and Zerilli equations. In the recent work of Ma \cite{Ma17spin2Kerr}, Morawetz estimates for the extreme components are proved in Kerr spacetime.

In this paper, we begin with the Regge-Wheeler-Zerilli-Monciref system, using the techniques in \cite{A-Blue-wavekerr} to prove the integrated decay estimate for both Regge-Wheeler and Zerilli variables. Based on this, we apply the $r^p$ hierarchy estimate \cite{D-R-09-rp, rp-ASF-Mos, S-13} to prove that solutions to Regge-Wheeler or Zerilli equations uniformly decay as $t^{-1}$, and the time derivatives decay as $t^{-2}$. Hence, the pointwise decay could be improved as $t^{-3/2}$ for finite $r$. Since both Regge-Wheeler and  Zerilli variables have angular frequence $\ell \geq 2$, it should be possible to improve the decay rate in this paper to at least $t^{-7/2}$ by vector field methods and to the rate given by the Price law, $t^{-7}$, by other methods.

\subsection{Regge-Wheeler and Zerilli equations}\label{Regge-Wheeler and Zerilli equations}
The Regge-Wheeler equation is given by
\begin{equation}\label{RW}
 \Box_g \solu -V_{g}^{RW} \solu = 0,  \quad V_{g}^{RW}=-\frac{8M}{r^3}.
\end{equation}
The Zerilli equation is given for each spherical harmonic mode by
\begin{equation}\label{Zerilli}
 \Box_g \solu -V_{g}^{Z}\solu = 0,  \quad V_{g}^{Z}=-\frac{8M}{r^3}\frac{(2\bar\lambda+3)(2\bar\lambda r+3M)r}{4(\bar\lambda r +3M)^2},
\end{equation}
where $2\bar\lambda =(\ell-1)(\ell+2)\geq4$.
These two equations are related by Chandrasekhar transformation \cite{Chand-75, Chand-92-bib}.
We note that, decomposed into spherical harmonics, $\ell \geq 2$ represents gravitational wave perturbations. In contrast, the equations of linearized gravity which lead to the Regge-Wheeler and Zerilli equations allow only a finite dimensional space of solutions for the $\ell =0$ and $\ell =1$ spherical harmonic modes. These correspond to perturbations of the mass (corresponding to moving from one Schwarzschild solution to another) and of the angular momentum  (corresponds to changing the non-rotating Schwarzschild background to a rotating Kerr solution and to gauge transformations), see \cite{J-99, S-T-01}. For this reason, we only consider solution to \eqref{RW} or \eqref{Zerilli} with support on $\ell \geq 2$.

\begin{remark}[Modes $\ell \geq 2$]\label{rk:mode}
We only consider solution to Regge-Wheeler \eqref{RW} or Zerilli equations\eqref{Zerilli} with modes $\ell\geq2$. For these, the spectrum of $-\mathring{\laplacianslash}_{S^2}$ acting on functions with $\ell \geq 2$ is $\ell(\ell+1)\geq 6$. Hence upon integrating over $S^2(t,r)$, 
\begin{equation}\label{angular-int}
\int_{S^2(t, r)} |\nablaslash\solu|^2 \geq \int_{S^2(t, r)} \frac{6}{r^2}|\solu|^2,
\end{equation}
where $\nablaslash$ is the induced covariant derivative on the sphere $S^2(t,r)$ of constant $r$ and $t$.
\end{remark}

\subsection{Statement of main results}\label{Statement of the Main Theorem}
We now state our main results. 

We shall make of the following hypersurfaces, which are illustrated in Figure \ref{fig:sigma-t}. 
Near the event horizon $\hori$, we fix $r_{NH}>2M$ with corresponding tortoise coordinate value $r_{NH}^\ast$
and let
\begin{equation}\label{sigma-t-int}
 \Sigma^i_\tau\doteq \{v=\tau +r_{NH}^\ast \} \cap  \{  r < r_{NH} \}.
\end{equation}
While away from the horizon, we use the usual time function $t$ and let
\begin{equation}\label{sigma-t-e}
 \Sigma^e_\tau\doteq \{t=\tau \} \cap  \{  r \geq  r_{NH} \}.
\end{equation}
The hypersurfaces $\Sigma_\tau$ is given by 
\begin{equation}\label{sigma-t}
\Sigma_\tau=\Sigma^i_\tau \cup \Sigma^e_\tau.
\end{equation}

\begin{figure}
\centering
\includegraphics[width=3.2in]{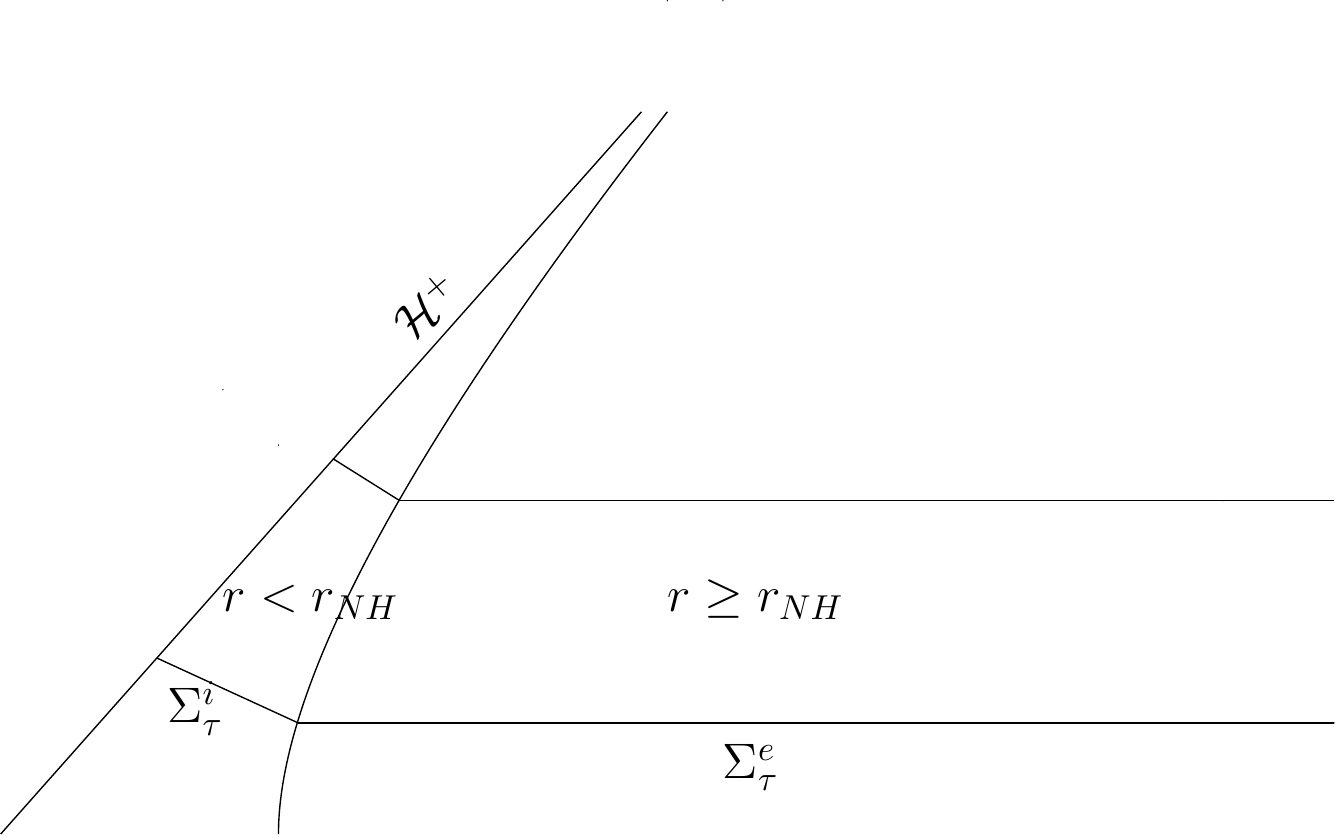}
\caption{The hypersurfaces $\Sigma_\tau=\Sigma^i_\tau \cup \Sigma^e_\tau$}
\label{fig:sigma-t}
\end{figure}

Let $\di \mu_{\Sigma}$ denote the volume form of $\Sigma.$ $n^\alpha_\Sigma$ is the future normal vector of $\Sigma$. We will drop the subscript $\Sigma$ on $n^\alpha_\Sigma$ when there is no confusion. We use $D$ to denote the Levi-Civita connection associated with the Schwarzschild metric $\gMetric_{\mu\nu}$. 
Let $\{ \Omega_i\}, \,\, i= 1,2,3$ be three rotational Killing vector fields about orthogonal axes on the $2$-sphere $S^2(t,r)$ with constant $t$ and $r$, and $\nablaslash$ the induced covariant derivative on $S^2(t,r)$. We shall allow us to use the short hand notation $\Omega^k$ for $\sum\limits_{i_1+i_2+i_3\leq k} \Omega_1^{i_1}\Omega_2^{i_2}\Omega_3^{i_3}$ for all $k\in\Naturals.$
 We fix a globally defined time-like vector field $N$ in the future development of the initial hypersurface $ J^{+}(\Sigma_{\tau_0})$, such that $N = \p_{t} $ for $r \geq r_{NH}>2M$. This would be defined in Section \ref{preliminaries}.
 
The energy-momentum tensor and the  corresponding  momentum vector for \eqref{eq-V} is
\begin{equation}\label{energy-tensor-1-momentum vector}
\begin{split}
\mathcal{T}_{\alpha\beta}(\varphi)&=D_\alpha\varphi D_\beta\varphi-\frac{1}{2} g_{\alpha\beta}( D^\gamma \varphi D_\gamma \varphi +V_g\varphi^2 ),\\
P_{\alpha}^\xi(\varphi)&= \mathcal{T}_{\alpha \beta}\cdot \xi^\beta.
\end{split}
\end{equation}
for a vector field $\xi^\mu$.
We take $\xi=N$ or $\xi=\dt$ to define the energy class.
The non-degenerate energy $E^N(\varphi, \Sigma_\tau)$ associated with $N$ is given by
\begin{equation}\label{def-nondeg-energy}
\begin{split}
&E^N(\varphi, \Sigma_\tau) \doteq  \int_{\Sigma_\tau} P^N_\alpha(\varphi)n_{\Sigma_\tau}^\alpha  \di \mu_{\Sigma_\tau}\\
&\sim \int_{\Sigma^i_\tau} \left( |\p_r\varphi|^2 +|\nablaslash\varphi|^2+\frac{|\varphi|^2}{r^2} \right) r^2\di r\di\sigma_{S^2} \\
&+ \int_{\Sigma^e_\tau} \left( |\p_t\solu|^2 + |\p_r\varphi|^2 +|\nablaslash\varphi|^2+\frac{|\varphi|^2}{r^2} \right) r^2\di r^\ast\di\sigma_{S^2},
\end{split}
\end{equation}
where the integral in the second line of \eqref{def-nondeg-energy} is written in $(v,r,\theta,\phi)$ coordinate.
For $\p_t,$ the associated energy $E^{\p_t}(\varphi,\tau)$ is given by
\begin{equation}\label{def-deg-energy}
\begin{split}
&E^{\p_t}(\varphi,\tau) \doteq  \int_{\{t=\tau\}} P^{\p_t}_\alpha(\varphi)n^\alpha  \di \mu_{\{t=\tau\}} \\
&\sim \frac{1}{2}
\int_{\{t=\tau\}} \left( |\p_t\varphi|^2  + |\p_{r^\ast}\solu|^2+(1-\mu) \left( |\nablaslash\varphi|^2+\frac{|\varphi|^2}{r^2} \right) \right) r^2\di r^\ast\di\sigma_{S^2}.
\end{split}
\end{equation}
Note that this energy is defined as an integral on the hypersurface $\{t=\tau\}$ not $\Sigma_\tau$.

We prove the uniform boundedness, integrated decay estimates  and pointwise decay for solution to the Regge-Wheeler or  Zerilli equations.

\begin{theorem}[Uniform Boundedness of the 
Energy]\label{uniform-bound}
Let $\solu$ be a solution to the Regge-Wheeler \eqref{RW} or Zerilli equations \eqref{Zerilli}, 
then for $\tau>\tau_0$,
\begin{equation}\label{uniform-energy}
E^N(\solu, \Sigma_\tau) \lesssim E^N(\solu,\Sigma_{\tau_0}).
\end{equation}
\end{theorem}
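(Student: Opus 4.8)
The plan is to prove the energy estimate $E^N(\solu,\Sigma_\tau)\lesssim E^N(\solu,\Sigma_{\tau_0})$ by applying the divergence theorem to the momentum vector $P^N_\alpha(\solu)$ on the spacetime region bounded by $\Sigma_{\tau_0}$, $\Sigma_\tau$, and the event horizon $\hori$, and then controlling the resulting bulk term. First I would set up the geometry: let $\mathcal{R}(\tau_0,\tau)$ denote the region in $J^+(\Sigma_{\tau_0})$ between the two leaves $\Sigma_{\tau_0}$ and $\Sigma_\tau$. Integrating the divergence $D^\alpha(P^N_\alpha(\solu))$ over $\mathcal{R}(\tau_0,\tau)$ and applying Stokes' theorem gives the boundary identity
\begin{equation}\label{eq:prop-div}
E^N(\solu,\Sigma_\tau) + (\text{flux through }\hori) = E^N(\solu,\Sigma_{\tau_0}) - \int_{\mathcal{R}(\tau_0,\tau)} D^\alpha P^N_\alpha(\solu)\,\di\mu_g.
\end{equation}
Because $\solu$ solves \eqref{eq-V}, the divergence of $P^N_\alpha$ is not conservation-free as it would be for a Killing field; instead $D^\alpha P^N_\alpha = \mathcal{T}^{\alpha\beta}(\solu)\,{}^{(N)}\pi_{\alpha\beta} + (\text{potential and source terms})$, where ${}^{(N)}\pi$ is the deformation tensor of $N$.

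The essential structural point is that $N$ equals the Killing field $\p_t$ for $r\geq r_{NH}$, so ${}^{(N)}\pi$ vanishes identically in the exterior part $\{r\geq r_{NH}\}$, and the bulk integrand is supported only in the compact-in-$r$ region $2M\leq r\leq r_{NH}$ near the horizon. In that region I would invoke the red-shift effect: the standard construction (following \cite{D-R-09, D-R-13}) chooses $N$ so that near $\hori$ the bulk term $\mathcal{T}^{\alpha\beta}(\solu)\,{}^{(N)}\pi_{\alpha\beta}$ has a favorable (coercive) sign on the derivatives transverse to the horizon, at the cost of a term controlled by the non-degenerate energy density itself. Concretely, one establishes a pointwise bound of the form $-D^\alpha P^N_\alpha(\solu)\leq C\,(\text{energy density of }\solu)$ throughout $\{2M\leq r\leq r_{NH}\}$, while the horizon flux term has the correct sign to be dropped (or is manifestly nonnegative by the causal character of $\hori$ and future-directedness of $N$). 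The potential terms arising from $V_g$ in \eqref{RW}–\eqref{Zerilli} are lower order and, after using the Hardy-type/angular inequality \eqref{angular-int} from Remark \ref{rk:mode} to absorb the negative potential against the angular energy, contribute only a controlled amount.

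Combining these observations with \eqref{eq:prop-div} yields a Grönwall-type inequality
\begin{equation}\label{eq:prop-gronwall}
E^N(\solu,\Sigma_\tau) \leq E^N(\solu,\Sigma_{\tau_0}) + C\int_{\tau_0}^{\tau} E^N(\solu,\Sigma_{s})\,\di s,
\end{equation}
after foliating $\mathcal{R}(\tau_0,\tau)$ by the leaves $\Sigma_s$ and bounding the bulk integral (supported near the horizon) by the integral of the energy. Grönwall's inequality then gives $E^N(\solu,\Sigma_\tau)\leq e^{C(\tau-\tau_0)}E^N(\solu,\Sigma_{\tau_0})$, which proves boundedness on any finite interval but with exponential growth. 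I expect the main obstacle to be upgrading this to the claimed uniform-in-$\tau$ bound with an absolute constant: this is exactly what the red-shift mechanism is designed to fix, since its coercivity lets one choose the near-horizon region so that the bulk term can be absorbed into the horizon flux and a fraction of the left-hand energy rather than merely bounded by it. The careful balancing of the red-shift constant against $r_{NH}$, and the treatment of the negative potential so that the total energy remains coercive, is where the real work lies; the sign of the potential in \eqref{RW}–\eqref{Zerilli} (note $V_g<0$) means one genuinely needs \eqref{angular-int} rather than pure positivity of $\mathcal{T}$.
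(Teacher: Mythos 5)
Your setup — the divergence identity for $P^N_\alpha(\solu)$ between $\Sigma_{\tau_0}$ and $\Sigma_\tau$, the observation that the bulk is supported in $\{2M\leq r\leq r_{NH}\}$ since $N=\p_t$ outside, the red-shift coercivity near $\hori$, and the use of \eqref{angular-int} to handle the negative potentials — all matches the paper's proof, which is built on exactly the identity you write (it is \eqref{energy-id-N} with the bulk split into $\{r<r_0\}$, where $K^N(\solu)\geq b\,P^N_\alpha(\solu)n^\alpha$, and the transition region $\{r_0<r<r_{NH}\}$). But there is a genuine gap at the decisive step. Your Gr\"onwall inequality only yields $E^N(\solu,\Sigma_\tau)\leq e^{C(\tau-\tau_0)}E^N(\solu,\Sigma_{\tau_0})$, and your proposed repair — that the red-shift coercivity ``lets one choose the near-horizon region so that the bulk term can be absorbed into the horizon flux and a fraction of the left-hand energy'' — does not work. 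The red-shift gives a good sign only in $\{r<r_0\}$; in the transition region, where the cutoffs $y_1,y_2$ in $N$ interpolate from $1$ to $0$, the current $K^N$ has no sign, and no choice of $r_0$, $r_{NH}$ eliminates this region. Moreover, the offending term is a \emph{spacetime} integral over the whole slab $[\tau_0,\tau]$, which grows with $\tau$; it cannot be absorbed into the fixed-time energy $E^N(\solu,\Sigma_\tau)$ or into the (one-signed but uncontrolled-from-below) horizon flux. Absorption arguments of the kind you describe can at best trade constants, not remove secular growth.

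The missing ingredient is dispersive: the paper controls $\doubleint_{\{r_0<r<r_{NH}\}}K^N(\solu)$ by the integrated local energy decay estimate, Theorem \ref{Integrated decay estimate} (legitimately applicable there since the transition region is far from the photon sphere $r=3M$, where that estimate degenerates). Concretely, the paper constructs an auxiliary solution $\tilde{\solu}$ by extending the data backward with $T$-invariant data on the horizon, uses that the $P^T$ flux through $\hori$ vanishes so that $E^T(\solu,\Sigma_\tau)=E^T(\tilde{\solu},\tau)$ as in \eqref{conservation-degenerate-energy-sigma-t-t}, and then bounds the transition-region bulk by $\int_{\Sigma_{\tau_0}}\sum_{i\leq1}P^T_\alpha(T^i\solu)n^\alpha$ via the Morawetz estimate, as in \eqref{integrated-energy-sigma-t-t}. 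So uniform (in $\tau$) boundedness here is not a consequence of red-shift plus energy conservation alone: without invoking Theorem \ref{Integrated decay estimate} (or some substitute integrated decay input), your scheme stalls at exponential — or with the sharper bound $K^N\lesssim P^T_\alpha n^\alpha$ in the transition region, linear — growth in $\tau$. You correctly sensed where the real work lies, but attributed it to the wrong mechanism.
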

The uniform boundedness of higher order energies is stated in Corollary \ref{uniform-bound-high}.

\begin{theorem}[Integrated Decay Estimate]\label{Integrated decay estimate}
Given $M >0$, there are positive constants $\bar{r}$ and a function $\localiseAwayFromPhotonOrbits$ that is identically one for $|r-3M| >\bar{r}$ and zero otherwise, such that for all smooth function $\solu$ solving the Regge-Wheeler equation \eqref{RW} or Zerilli equations \eqref{Zerilli}, we have
\begin{equation}\label{Morawetz-1-rw-Z}
\begin{split}
&\int_{\tau_0}^{\tau} \di t\int_{\bar\Sigma_t} \left(\frac{\Delta^2}{r^6} |\p_r\solu|^2+\frac{ |\solu|^2}{r^4}+\localiseAwayFromPhotonOrbits \frac{1}{r^3} \left( |\p_t\solu|^2 + |r\nablaslash\solu|^2 \right) \right) r^2\di r \di \sigma_{S^2}   \\
&\lesssim E^{\p_t}(\solu, \tau_0),
\end{split}
\end{equation}
where $\bar\Sigma_\tau=\{t=\tau\}$.
\end{theorem}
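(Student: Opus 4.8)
The plan is to prove \eqref{Morawetz-1-rw-Z} by the vector-field (multiplier) method following \cite{A-Blue-wavekerr}, using a spherically symmetric current built from a radial multiplier together with a lower-order correction. Concretely, I would choose a pair $(X,q)$ with $X=f(r)\prstar$ for a bounded radial function $f$ to be determined, and form the current
\[
J_\alpha = P^X_\alpha(\solu) + q\,\solu\,D_\alpha\solu - \tfrac12\,\solu^2\,D_\alpha q ,
\]
whose spacetime divergence, after using the equation $\Box_g\solu=V_g\solu$ and the resulting identity $\nabla^\alpha\mathcal{T}_{\alpha\beta}(\solu)=-\tfrac12\solu^2\,\partial_\beta V_g$, takes the form
\[
\nabla^\alpha J_\alpha = \tfrac12\,\mathcal{T}^{\alpha\beta}(\solu)\,{}^{(X)}\pi_{\alpha\beta} + q\,D^\gamma\solu\,D_\gamma\solu + q\,V_g\solu^2 - \tfrac12\bigl(X(V_g)+\Box_g q\bigr)\solu^2 ,
\]
where ${}^{(X)}\pi_{\alpha\beta}=D_\alpha X_\beta+D_\beta X_\alpha$. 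The strategy is to choose $f$ and $q$ so that the right-hand side is a coercive quadratic form in $(\prstar\solu,\dt\solu,\nablaslash\solu,\solu)$ whose weights reproduce the integrand of \eqref{Morawetz-1-rw-Z}, then integrate over the slab $\{\tau_0\le t\le\tau\}$ and apply the divergence theorem.

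Next I would compute ${}^{(X)}\pi_{\alpha\beta}$ for $X=f\prstar$ in $(t,r^\ast,\theta,\phi)$ coordinates and organize the bulk term by derivative type. The coefficient of $(\prstar\solu)^2$ is governed by $f'$, so I take $f$ increasing to control the radial derivative; since $\prstar=\eta\,\dr$ and $\eta=\Delta/r^2$, this matches the first term $\frac{\Delta^2}{r^6}|\dr\solu|^2$ of \eqref{Morawetz-1-rw-Z}, which is allowed to degenerate at the horizon. The coefficient multiplying $|\nablaslash\solu|^2$ and $|\dt\solu|^2$ turns out to be proportional to a function that must vanish at the photon sphere $r=3M$: this is the fundamental obstruction from trapping, and it forces $f$ to change sign at $r=3M$, which is precisely why these two terms in \eqref{Morawetz-1-rw-Z} carry the cutoff $\localiseAwayFromPhotonOrbits$ while the radial and zeroth-order terms do not. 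The zeroth-order term $\solu^2/r^4$ is the most delicate: its coefficient $q\,V_g-\tfrac12(X(V_g)+\Box_g q)$ has no definite sign on its own (for Regge--Wheeler $V_g=-8M/r^3<0$), so I would close it by absorbing a controlled fraction of the positive angular term, using the mode restriction $\ell\ge2$ through the Poincar\'e inequality \eqref{angular-int} of Remark \ref{rk:mode} together with a Hardy inequality; this is the role of the small parameters $\epsilonHardyExtraFactorOnPotential$ and its variants.

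Having arranged pointwise coercivity of $\nabla^\alpha J_\alpha$, I would integrate over $\{\tau_0\le t\le\tau,\ r>2M\}$ and invoke the divergence theorem. The flux through the slices $\{t=\tau\}$ and $\{t=\tau_0\}$ is bounded by $E^{\p_t}(\solu,\cdot)$ because $f$ and $q$ are bounded and $X$ is a multiple of $\prstar$, so the slice flux reduces to cross terms $f\,\prstar\solu\,\dt\solu$ and lower-order pieces controlled by Hardy and \eqref{angular-int}; since $\dt$ is Killing the energy $E^{\p_t}$ is non-increasing in $\tau$ (its horizon flux has a favourable sign), so the final slice is absorbed and everything is bounded by $E^{\p_t}(\solu,\tau_0)$. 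The horizon flux of $J$ is made harmless by taking $f\le0$ near $r=2M$ so that its sign is favourable, reinforced by the weights degenerating as $\Delta^2/r^6$; the contribution as $r\to\infty$ is integrable because $f$ is bounded and the weights decay.

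The main obstacle is the simultaneous coercivity of the full bulk quadratic form across all $r\in(2M,\infty)$ and all modes $\ell\ge2$: one must exhibit a single $f$ (and $q$) that keeps $f'>0$ for the radial term, forces the angular and time coefficient to vanish only at $r=3M$ and to the correct order, and still leaves enough positivity to dominate the indefinite zeroth-order term after the Hardy and Poincar\'e absorptions. This balancing does not admit a clean closed-form choice; as in \cite{A-Blue-wavekerr}, it reduces to verifying the positivity of an explicit quadratic form (equivalently an associated ODE inequality in $r$) whose coefficients are checked numerically, which is the source of the numerically computed constants referred to in the abstract.
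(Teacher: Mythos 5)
Your outline follows the same high-level route as the paper --- a radial Morawetz multiplier with a zeroth-order correction, trapping-induced degeneration at $r=3M$ expressed through the cutoff $\localiseAwayFromPhotonOrbits$, the $\ell\geq2$ spectral bound \eqref{angular-int}, and boundary fluxes controlled by the conserved $\partial_t$ energy --- but it has a genuine gap at precisely the step where all of the paper's work lies. You reduce the theorem to ``verifying the positivity of an explicit quadratic form (equivalently an associated ODE inequality in $r$) whose coefficients are checked numerically,'' without supplying any mechanism for that verification. In the paper, coercivity of the indefinite zeroth-order bulk term is equivalent to the Hardy inequality \eqref{Hardy}, which (following Lemma 3.12 of \cite{A-Blue-wavekerr}) is established by exhibiting a \emph{positive $C^2$ solution} $\phi$ of the ODE $-\partial_r(A\partial_r\phi)+V\phi=0$, see \eqref{eq:CoreODE}, with $A=M\Delta^2/r^4$ and $V$ a lower bound for the Morawetz potential $\mathcal{V}+6\uu$. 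The entire proof then hinges on Lemma \ref{lemma-hypergeometric-tansformations}: for potentials of the form $V=\frac{M}{r^4}(V_2r^2+V_1Mr+V_0M^2)$ this ODE transforms to a hypergeometric equation with explicitly computable parameters, and positivity of the relevant solution follows from the Euler integral representation \eqref{eq:Hardy-Hypergeometric-Integral-Representation} because $c>b>0$. Neither this construction nor any substitute for it appears in your proposal, so the ``balancing'' you defer at the end is not actually closed.

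Moreover, your description of where numerics enter mischaracterizes the proof, and you miss the Zerilli-specific obstruction entirely. For Regge--Wheeler the lower bound $\underline{V}$ in \eqref{V-RW} is a single cubic in $M/r$ valid on all of $[2M,\infty)$, and positivity is purely analytic. For Zerilli no single such potential works globally: the paper constructs a merely \emph{continuous} piecewise lower bound $V_{joint}$ \eqref{V-global}, split at $r=3M$ (Lemma \ref{lemma-Lower bound for the potential}, exploiting the sign of $wf$ and the monotonicity $\dr\zeta>0$ on each side), solves the hypergeometric equation separately on the two regions, glues the Frobenius solutions $C^1$ at the junction with the weight $\omega$ in \eqref{omega}, and proves positivity of the glued solution by showing that the combination $G=u_{22}+\Lambda u_{21}$ coincides, up to a constant, with the Frobenius solution decaying at the regular singularity $r=\infty$ (Theorem \ref{thm-Positive hypergeometric function for Zerilli equations}). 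The numerically computed quantities are the matching data $w_{11},w_{21},w_{22}$, the asymptotic ratio $\Lambda$ \eqref{limit-u21-u22}, and the inequality $(1-\omega)/\omega>\Lambda$ --- not a pointwise numerical check of a quadratic form's coefficients. An attempt that does not recognize that the Zerilli potential forces this two-region construction, with positivity obtained through asymptotic identification at infinity rather than direct inspection, would fail exactly there.
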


\begin{remark}
$\localiseAwayFromPhotonOrbits$ is due to the trapped null geodesic at $r=3M$.
However, with a loss of regularity, commuting with $T$, we have,
\begin{equation}\label{Morawetz-2-nontrap}
\begin{split}
&\int_{\tau_0}^{\tau} \di t\int_{\bar\Sigma_t} \left(\frac{\Delta^2}{r^6} |\p_r\solu|^2+\frac{ |\solu|^2}{r^4}+ \frac{|\p_t\solu|^2}{r^3} + \frac{|r\nablaslash\solu|^2}{r^3} \right) r^2 \di r \di \sigma_{S^2}  \\
 \lesssim & E^{\p_t}(\solu, \tau_0) + E^{\p_t}(T\solu, \tau_0).
\end{split}
\end{equation}
Alternatively, the sharp cut-off $\localiseAwayFromPhotonOrbits$ can be replaced by a function that vanishes quadratically in $(r-3M)$. 
\end{remark}

Combining the red shift effect, the uniform boundedness of the energy (Theorem \ref{uniform-bound}), and the integrated decay estimate (Theorem \ref{Integrated decay estimate}), one can  use the globally time-like vector field $N$ to obtain a non-degenerate local integrated decay estimate (Corollary \ref{nondeg-Integrated decay estimate}). This can be generalized to the high order derivative cases (Corollary \ref{coro-nondeg-Integrated decay estimate-high-order}). 

To state the decay estimate, we introduce additional notation. Let $R>3M$ be a large constant. We define the interior region
\begin{equation}\label{def-interior-sigma-t-prime}
\Sigma^\prime_{\tau} =\Sigma_{\tau} \cap \{r\leq R\}.
\end{equation}
\begin{theorem}[Energy Decay and Pointwise Decay]\label{pointwise-energy-intro}
Let $R>3M$, and define the weighted derivatives $\mathbb{D}=\{r\pv, (1-\mu)^{-1}\pu, r\nablaslash\}$.
Let $\solu$ be a solution of Regge-Wheeler \eqref{RW} or Zerilli equations \eqref{Zerilli}, with initial data (setting $u_0=\tau_0-R^\ast$ and $v_0=\tau_0+R^\ast$) satisfying 
\begin{equation}\label{initial data-improved decay-high-intro}
\begin{split}
I\doteq& \sup_{n \in \Naturals }  \int_{v_0}^{\infty} \di v \int_{S^2}  \sum_{i\leq n}   \sum_{j\leq2, k+l \leq 8 }  |\mathbb{D}^i (r\pv)^j \Omega^l \dt^k \solu|^2 r^2 \di \sigma_{S^2} \big|_{u=u_0}\\
 +& \int_{\Sigma^\prime_{\tau_0}}   \sum_{k+l\leq n+9}  P^N_\mu(\dt^k\Omega^l \solu) n^\mu \di \mu_{\Sigma^\prime_{\tau_0}}<\infty
\end{split}
\end{equation}
for all $n\in \Naturals$. 
Then we have the energy decay estimate
\begin{equation}\label{intro-energy-decay-high}
\begin{split}
& \sup_{n \in \Naturals }  \int_{\tau+R^\ast}^{\infty} \di v \int_{S^2}  P^N_\alpha(\mathbb{D}^n\solu) \p_v^\alpha \cdot r^2 \di \sigma_{S^2} \big|_{u=\tau- R^\ast} \\
+  & \sup_{k,l \in \Naturals }   \int_{\Sigma^\prime_{\tau}}  P^N_\mu(\dt^k\Omega^l \solu) n^\mu \di \mu_{\Sigma^\prime_\tau} \lesssim \frac{I}{\tau^2},
\end{split}
\end{equation}
and
\begin{equation}\label{intro-decay-energy-Improved}
\begin{split}
&\sup_{n \in \Naturals } \int_{\tau+R^\ast}^{\infty} \di v \int_{S^2}   P^N_\alpha(\mathbb{D}^n \dt\solu)\p_v^\alpha \cdot r^2 \di \sigma_{S^2} \big|_{u=\tau- R^\ast}\\
+  &\sup_{k,l \in \Naturals }   \int_{\Sigma^\prime_{\tau}}  P^N_\mu(\dt^k\Omega^l \dt\solu) n^\mu \di \mu_{\Sigma^\prime_\tau} \lesssim \frac{I}{\tau^4}.
\end{split}
\end{equation}
For $t=\tau\geq \tau_0, \,  u\geq u_0$, we have the uniform pointwise decay estimate 
\begin{equation}\label{intr-pointwise decay-uni-high}
r^{\frac{1}{2}} \sup_{n \in \Naturals} |\mathbb{D}^n\solu|(\tau, r) \lesssim \frac{I}{\tau}, \quad  \quad r^{\frac{1}{2}} \sup_{n\in \Naturals} |\mathbb{D}^n \dt\solu|(\tau, r) \lesssim \frac{I}{\tau^2},
\end{equation}
and the improved interior pointwise decay estimate
\begin{equation}\label{intro-improved-pointwise decay-high}
 \sup_{n \in \Naturals}  |\mathbb{D}^n\solu| (\tau,r)\lesssim \frac{I}{\tau^{\frac{3}{2}}},  \quad \text{for} \quad r<R.
\end{equation}
\end{theorem}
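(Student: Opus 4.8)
\emph{The plan.} Since uniform boundedness (Theorem~\ref{uniform-bound}), the degenerate integrated local energy decay (Theorem~\ref{Integrated decay estimate}) and its non-degenerate near-horizon refinement via the red-shift (Corollary~\ref{nondeg-Integrated decay estimate}) are already in hand, the remaining task is a decay-extraction argument. I would run the $r^p$-weighted energy hierarchy of Dafermos--Rodnianski \cite{D-R-09-rp, S-13, rp-ASF-Mos} on the asymptotic region $\{r\ge R\}$, foliated by the outgoing null cones $\{u=\text{const}\}$, and glue it to the interior $\{r\le R\}$ (foliated by the $\Sigma_\tau$ of \eqref{sigma-t}), where Theorems~\ref{uniform-bound}--\ref{Integrated decay estimate} already provide control. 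First I would set $\Phi=r\solu$ and rewrite \eqref{eq-V} in the null coordinates $(u,v)$ as a $1+1$ wave equation $\pu\pv\Phi=(\text{angular})+(\text{potential})$; the crucial point is that both $V_{g}^{RW}$ and $V_{g}^{Z}$ decay like $r^{-3}$, so the potential terms are subcritical relative to the $r^p$ weights for $p\in(0,2]$ and do not spoil the favourable signs in the hierarchy.

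\emph{Energy decay \eqref{intro-energy-decay-high}.} Applying the multiplier $r^p\pv\Phi$ and integrating over a region bounded by two outgoing cones and the $\Sigma_\tau$, I would obtain, for $p\in(0,2]$, the standard weighted identity in which the flux $\int r^p|\pv\Phi|^2$ on the later cone plus a positive bulk controlling $p\,r^{p-1}|\pv\Phi|^2+(2-p)\,r^{p-1}|\nablaslash\Phi|^2$ is dominated by the flux on the earlier cone plus the local energy near $r\simeq R$, the latter being absorbed by Theorem~\ref{Integrated decay estimate}. This yields the two-level hierarchy: the $p=2$ flux controls the $p=1$ bulk, and the $p=1$ flux controls the energy-level bulk. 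A dyadic pigeonhole in $\tau$, fed by the uniform boundedness and the integrated decay, then produces the $\tau^{-2}$ decay of the total non-degenerate energy. To reach the $\mathbb{D}^n$ versions I would commute with the weighted fields $\mathbb{D}=\{r\pv,(1-\mu)^{-1}\pu,r\nablaslash\}$, with $\dt$, and with the angular $\Omega$, checking that the commutators are lower order and carry the correct $r$-weights so that the hierarchy closes at every order, uniformly in $n$. The quantity $I$ of \eqref{initial data-improved decay-high-intro} is precisely the sum of the top ($p=2$) fluxes on the initial cone $\{u=u_0\}$ and the commuted interior energies on $\Sigma^\prime_{\tau_0}$ that this scheme consumes.

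\emph{Improved time-derivative decay \eqref{intro-decay-energy-Improved} and pointwise bounds \eqref{intr-pointwise decay-uni-high}.} Because $\dt$ is Killing it commutes with \eqref{eq-V}; running the hierarchy for the $\dt$-commuted solution gains two further powers of $\tau$, the extra decay being supplied in the usual way by the improved flux decay of $\solu$ at the next level of the hierarchy, which yields the $\tau^{-4}$ estimate \eqref{intro-decay-energy-Improved}. The uniform pointwise bounds follow by converting cone-flux decay into $L^\infty$ control: Sobolev embedding on $S^2$ (using the $\Omega$-commuted energies) together with a weighted fundamental-theorem-of-calculus/Hardy argument in $v$ along the cones turns the decaying $p=2$ flux into $r^{\frac12}|\mathbb{D}^n\solu|\lesssim I/\tau$, and the same argument applied to $\dt\solu$ gives $r^{\frac12}|\mathbb{D}^n\dt\solu|\lesssim I/\tau^2$.

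\emph{Improved interior decay \eqref{intro-improved-pointwise decay-high} and the main obstacle.} For $r<R$ the naive Sobolev bound from the $\tau^{-2}$ interior energy only gives $\tau^{-1}$; the extra half-power is obtained by interpolating the $\tau^{-2}$ energy decay of $\solu$ against the $\tau^{-4}$ decay of $\dt\solu$ on the compact region, using \eqref{eq-V} to trade second-order spatial derivatives for $\dt^2\solu$ together with a Gagliardo--Nirenberg/elliptic interpolation inequality, so that the two rates combine to the claimed $\tau^{-\frac32}$. I expect the principal difficulty to lie not in the pigeonhole decay scheme but in constructing the hierarchy at all commuted orders: verifying that the bulk terms retain their signs once $V_g$ and the weighted and angular commutators are included, and that the asymptotic null-cone region matches cleanly with the $t$-slices near $r=R$ and with the red-shift region near $\hori$, uniformly in the number $n$ of applied weighted derivatives.
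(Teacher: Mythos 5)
Your overall architecture coincides with the paper's: set $\Solu=r\solu$, run the $r^p$-multiplier hierarchy on $\{r\ge R\}$ glued to the interior control from Theorems \ref{uniform-bound}--\ref{Integrated decay estimate}, extract decay by dyadic pigeonholing, commute with $\mathbb{D}$, $\dt$, $\Omega$, and finish with Sobolev along cones plus a $\solu$/$\dt\solu$ interpolation in the interior. But there are two concrete gaps, both located exactly at the endpoint $p=2$ on which the theorem's clean rates $\tau^{-4}$ and $\tau^{-3/2}$ (no $\delta$-loss) depend. First, your claim that the potentials are ``subcritical\dots and do not spoil the favourable signs'' for $p\in(0,2]$ is false at $p=2$: since $\hat V^{RW}=-6M/r^3<0$, the multiplier identity produces the wrong-sign bulk $\eta^{-k+1}\bigl(6M(p-3)r^{p-4}+12M^2(4-k-p)r^{p-5}\bigr)|\Solu|^2$ (see \eqref{neg-rw}), which for $p<2$ can be absorbed by the $(2-p)r^{p-1}|\nablaslash\Solu|^2$ bulk via the $\ell\ge2$ spectral bound, but at $p=2$ that coefficient vanishes and the Hardy inequality in $v$ is borderline-lossy. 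The paper's fix is the modified multiplier $\eta^{-k}r^p\pv\Solu$ with $k=4$, whose derivative generates the additional positive angular bulk $2M(k-1)r^{p-2}\eta^{-k+1}|\nablaslash\Solu|^2$ surviving at $p=2$ (cf.\ \eqref{id-r-p}, \eqref{positive-add-rw}, Remark \ref{rk-0order-rp}); without some such device your zero-order Lemma does not close at $p=2$, which is already needed for the basic $\tau^{-2}$ statement.

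Second, your step ``checking that the commutators are lower order\dots so that the hierarchy closes at every order'' is precisely what fails naively at first order: commuting with $r\pv$ produces the term $\bigl(\eta^2-2\eta(1-\tfrac{3M}{r})\bigr)\laplacianslash\Solu$ in \eqref{commutator-S1}, and pairing it with $r^p\pv\So$ yields, after Cauchy--Schwarz, a bulk $\sim r^{p-1}|\nablaslash\Solu|^2$ with an $O(1)$ constant that the $(2-p)$-weighted zero-order bulk cannot absorb when $p=2$. The paper's key move is to integrate this term by parts \emph{twice}, gaining the factor $(2-p)^2$ (see \eqref{eq-A-2}--\eqref{esimate-A-2-main}), which makes the obstruction vanish at $p=2$; without it you only recover Schlue's range $p<2$, hence $\tau^{-4+\delta}$ for $E(\dt\solu)$ and $\tau^{-3/2+\delta}$ in the interior, not the claimed rates. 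Relatedly, your description of \eqref{intro-decay-energy-Improved} as ``running the hierarchy for the $\dt$-commuted solution'' misidentifies the mechanism: since $\dt$ is Killing, $T\solu$ satisfies the same equation with comparable data and the same hierarchy only returns $\tau^{-2}$. The actual gain comes from the first-order hierarchy for $\psifirst=\pv\Solu$ with weights extended to $2<p\le4$ (\eqref{rp-psi-1-1}), converting $\pv T\Solu=\pv^2\Solu+\pv\pu\Solu$ through the equation as in \eqref{pv-T-Solu}, and cascading $p=4,3,2,1$. Your ``next level of the hierarchy'' gestures at this but must be made precise, and it again requires the $p=2$ endpoint above. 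Your Gagliardo--Nirenberg/elliptic interpolation for the interior $\tau^{-3/2}$ differs from the paper's time-integration identity \eqref{interp-integrate-t} with dyadic induction, and is plausibly workable, but only once the $\tau^{-4}$ input is genuinely available.
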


\subsection{Comment on the proof}\label{Overview of the Proof}
In this section we present some of the idea in the proof leading to the main results. \\

{\bf Integrated local energy decay.}
The proof is based on the techniques in Andersson and Blue \cite{A-Blue-wavekerr}. We use the radial multiplier vector field as in \cite{A-Blue-wavekerr} (reducing to the Schwarzschild case), which is explained in Section \ref{Local Integrated decay estimate} in details. 
Using the fact that $\ell \geq 2$ as stated in remark \ref{rk:mode}, we could improve the lower bound $\underline{V}$ of the potential in Morawetz estimates. This allows us to find a positive $C^2$ hypergeometric function which further gives a Hardy type estimate. In the Zerilli case, there are some difficulties in getting a smooth lower bound for the Morawetz potential in the whole region $[2M, \infty)$ and in applying the same strategy as in Regge-Wheeler case. However, we are allowed to relax the requirement of smooth lower bound, and find a continuous lower bound $V_{joint}$ for the Morawetz potential. After that, similar to the Regge-Wheeler case, we could work on the second order ordinary differential equation with the potential $\underline{V}$ being replaced by $V_{joint}$, and find a positive $C^2$ solution. Then the Hardy type estimate follows. The crucial part would be in finding a positive $C^2$ solution to the hypergeometric differential equation (associated to the Zerilli case). This could be done by further analysis for the two Frobenius solutions to the hypergeometric differential equation. Especially, the asymptotic expansion at the singularity $r=\infty$ is needed to prove the positivity.

{\bf $r^p$ hierarchy.} In section \ref{decay estimate-RW-Zerilli}, we use a multiplier of the form $r^p\pv$ which gives the $r^p$ hierarchy of estimates and this yields the energy decay. This approach, originated in the context of wave equation \cite{D-R-09-rp}, can also be adapted to Regge-Wheeler and Zerilli equations. Proceeding to the high order case, we further commute the equations with $r\pv$ and derive a first order $r^p$ weighted inequality for all $0<p\leq 2$. (For the wave equation, the range $0<p<2$ was treated in \cite{S-13} and the end point $p=2$ was reached in  \cite{rp-ASF-Mos}.) Based on this, the $r^p$ hierarchy of estimate yields the decay rate $t^{-3/2}$ for $\psi(t,r)$ with finite $r$, which should be compared with \cite{S-13} (or \cite{L-10}), where the decay is as $t^{-3/2+\delta}$ in a compact region. Technically, when $p=2$, one would lose the control for the angular derivatives (see the $r^p$ inequality \eqref{ineq-rp-0} where the coefficient of the angular derivative terms in the spacetime integral vanishes). To achieve the first order $r^p$ weighted inequality for $p=2$,  we additionally perform the integration by parts twice on the leading error term which involves angular Laplacian (see \eqref{eq-A-2}, \eqref{A-2} and \eqref{esimate-A-2-main}). In doing this, we gain the additional factor $(2-p)^2$, which further entails that the leading angular derivative term vanishes when $p=2$. In summary, this approach adapts some idea in \cite{rp-ASF-Mos} to the Regge-Wheeler and Zerilli equations and improves the decay estimates from $t^{-3/2+\delta}$ to $t^{-3/2}$ in finite radius region.

The paper is organized as follows: We begin in Section \ref{preliminaries} with preliminaries, introducing basic notation and background. Section \ref{Local Integrated decay estimate} is devoted to the integrated local energy decay estimate and uniform boundedness. The energy decay  and pointwise decay are proved in Section \ref{decay estimate-RW-Zerilli}.
\\

{\bf Acknowledgements} We are grateful to Steffen Aksteiner, Siyuan Ma and Vincent Moncrief for many helpful discussions and suggestions. J.W. was supported by a Humboldt Foundation post-doctoral fellowship at the Albert Einstein Institute during the period 2014-16, when part of this work was done.  She is also supported by Fundamental Research Funds for the Central Universities (Grant No. 20720170002).

\section{Preliminaries}\label{preliminaries}

In this section, we present some more notations and basic estimates that we shall use throughout the paper.

\subsection{Notations}\label{notations}
Let us introduce the notation $T$ to denote the coordinate vector field $\p_t$ with respect to $(t,r)$ coordinates. It is time-like only when $r>2M$. The globally defined time-like vector field $N$ could be defined as 
\begin{equation*}
N=\dt + \frac{y_1(r)}{1-\mu}\pu + y_2(r)\pv,
\end{equation*}
where $y_1, y_2 >0$ are supported near the event horizon, say, $r\leq r_{NH}$, and $y_1=1, y_2=0$ at the event horizon. Notice that we can also write $N$ in the $(v,r, \theta,\phi)$ coordinates as
\begin{equation*}
N=(1+ 2y_2(r))\pv - (y_1(r)- y_2(r)(1-\mu))\dr.
\end{equation*}

We shall let $D$ denote the covariant derivative associated with the Schwarzschild metric, $\partial$ the derivative in terms of coordinates, and $\nablaslash$ the induced covariant derivative on the sphere $S^2(t,r)$ of constant $t$ and $r$. $\laplacianslash$ is the induced Laplacian on $S^2(t,r)$.
For volume forms, we denote the spacetime volume form by $\di \mu_g$, volume form on $\Sigma_\tau$ by $\di \mu_{\Sigma_\tau}$. And let $S^2$ be the unit $2$-sphere with $\mathring{\laplacianslash}_{S^2}$ the induced Laplacian on $S^2$.\\

 The notation $x\lesssim 􏰑y$ means $x\leq c y$ for a universal constant $c$, and the notation $x\sim 􏰯y$ means $x􏰑 \lesssim y$ and $y \lesssim􏰑 x$. All objects are smooth unless otherwise stated.

\subsection{Energy estimate for wave equation}\label{Energy Estimate}
We would like to study the solutions to the wave equation \eqref{eq-V} on Schwarzschild spacetime.
The energy-momentum tensor for \eqref{eq-V} is
\begin{equation}\label{energy-tensor-1}
\mathcal{T}_{\alpha\beta}(\varphi)=D_\alpha\varphi D_\beta\varphi-\frac{1}{2} g_{\alpha\beta}( D^\gamma \varphi D_\gamma \varphi +V_g\varphi^2 ).
\end{equation}
Given a vector field $\xi^\mu$, the  corresponding  momentum vector is defined by
\begin{equation}\label{momentum vector}
P_{\alpha}^\xi(\varphi)= \mathcal{T}_{\alpha \beta}\cdot \xi^\beta.
\end{equation}
The corresponding energy on a hypersurface $\Sigma_\tau$ is
\begin{equation}\label{def-energy -t}
E^\xi(\varphi,\Sigma_\tau)=-\int_{\Sigma_\tau} P^\xi_{\alpha}(\varphi) n^{\alpha}_{\Sigma_\tau} \di \mu_{\Sigma_\tau},
\end{equation}
where $n^{\alpha}_{\Sigma_\tau}$ is outward normal to $\Sigma_\tau$.
The energy identity takes the form
\begin{equation}\label{energy identity-stockes}
E^\xi(\varphi,\Sigma_\tau)-E^\xi(\varphi,\Sigma_{\tau_0})=-\doubleint_{\D} D^\alpha P_\alpha^\xi \di \mu_g
\end{equation}
where $\D$ is the region enclosed between $\Sigma_\tau$ and $\Sigma_{\tau_0}$.
The associated current $K^\xi(\varphi)$ is defined as
\begin{equation}\label{def-current}
K^\xi(\varphi)=D^\alpha P_\alpha^\xi(\varphi).
\end{equation}
In applications, $\xi$ will be taken as the vector field $\dt$ or $ N.$

\subsection{Hypergeometric Differential Equation}\label{Hypergeometric Differential Equation}
We refers to \cite{AbramowitzEtAl}($\S 15$) for more background of \emph{hypergeometric functions}.
\begin{equation}\label{hypergeometric-differential-equation}
z(1-z)\frac{d^2w}{d^2z}+(c-(a+b+1)z)\frac{dw}{dz}-abw=0.
\end{equation}
This is the \emph{hypergeometric differential equation}.

\subsubsection{Fundamental Solutions}\label{Fundamental Solutions}
Solution of the hypergeometric differential equation \eqref{hypergeometric-differential-equation} has regular singularities at $z=0,1,\infty$ with corresponding exponent pairs $\{0, 1-c\}, \{0, c-a-b\}, \{ a, b\}$ respectively \cite{AbramowitzEtAl} ($\S 15$). When none of the exponent pairs differ by an integer, that is, when none of the $c, c-a-b, a-b$ is an integer, we have the following pairs $f_1(z), f_2(z)$ of fundamental solutions. They are also numerically satisfactory in the neighborhood of the corresponding singularity. 
\begin{itemize}
\item Adapted to {\bf Singularity $z=0$}
\begin{equation}\label{f-s-asym-0}
\begin{split}
f_1(z)&=F(a, b; c;z),\\
f_2(z)&=z^{1-c}F(a-c+1, b-c+1; 2-c; z).
\end{split}
\end{equation}
\item Adapted to {\bf Singularity $z=1$} 
\begin{equation}\label{f-s-asym-1}
\begin{split}
f_1(z)&=F(a, b; a+b+1-c; 1-z),\\
f_2(z)&=(1-z)^{c-a-b}F(c-a, c-b; c-a-b+1; 1-z).
\end{split}
\end{equation}
\item Adapted to {\bf Singularity $z=\infty$}
\begin{equation}\label{f-s-asym-infty}
\begin{split}
f_1(z)&=z^{-a}F(a, a-c+1;a-b+1;\frac{1}{z}),\\
f_2(z)&=z^{-b}F(b, b-c+1;b-a+1;\frac{1}{z}).
\end{split}
\end{equation} 
\end{itemize}

\subsubsection{Integral Representations}\label{Integral Representations}
The hypergeometric function $F(a,b;c;z)$ has the following integral representation \cite{AbramowitzEtAl} ($\S15$):  For $0<\Re b<\Re c$ and $z\not\in[1,\infty)$
\begin{align}
F(a,b;c;z)
={}&{} 
\frac{\Gamma(c)}{\Gamma(b)\Gamma(c-b)} \int_0^1 t^{b-1} (1-t)^{c-b-1} (1-zt)^{-a} \di t .
\label{eq:Hardy-Hypergeometric-Integral-Representation}
\end{align}
And $F$ is symmetric in its first two arguments, $F(a,b;c;z)=F(b,a;c;z)$.

\section{Integrated local energy decay and Uniform bound}\label{Local Integrated decay estimate}

\subsection{Morawetz Vector Field}
In this section, we consider momentum of the form associated to solution $\psi$ of the wave equation \eqref{eq-V},
\begin{equation*}
P^\alpha=P^\alpha[\psi, \xi,q]=\mathcal{T}^{\alpha}_{\beta}\cdot \xi^\beta+q D^\alpha\psi\cdot\psi-\frac{1}{2} D^\alpha q\cdot\psi^2.
\end{equation*}
We shall consider a generalized Morawetz vector field $\vecfont{A}$ of the form:
\begin{align*}
\vecMclassical ={}&{} -\fnMna\fnMnb\fnMncClassical \dr ,\,\,\,\,\,\,
\fnMncClassical= \dr z ,\\
\fnMpclassical ={}&{} \frac12\left(\dr\fnMna\right)\fnMnb\fnMncClassical ,\,\,\,
\fnMclassical  =\frac12\left(\dr \vecMclassical^r\right) + \fnMpclassical .
\end{align*}
Defining the functions $\fnMnb, \fnMna$:
\begin{align*}
\fnMnb=\frac{r^5}{6}, \quad \KDelta= r^2-2Mr, \quad \fnMna= \frac{\KDelta}{r^4},
\end{align*}
we have
\begin{equation}\label{bulk-Morwetz}
D_\alpha P^\alpha[\solu, \vecMclassical,q] 
= \CurlyALzZero (\dr\solu)^2 + \CurlyULzZero^{\alpha\beta} \partial_\alpha\solu \partial_\beta\solu +\CurlyVLzZero|\solu|^2 , 
\end{equation}
with
\begin{align*}
\CurlyALzZero={}&{}-\fnMna^{1/2} \KDelta^{3/2} \dr\left(\fnMnb \frac{\fnMna^{1/2}}{\KDelta^{1/2}}\fnMncClassical \right) , \\
\CurlyVLzZero={}&{} \frac{1}{4}\dr(\KDelta\dr(\fnMna\dr(\fnMnb f))) + \frac{1}{2}\fnMnb\fnMncClassical\dr(zV),
\end{align*}
and $\CurlyU^{\alpha\beta} \p_\alpha \solu \p_\beta \solu$ involves angular derivatives 
\begin{align*}
\CurlyU^{\alpha\beta} \p_\alpha \solu \p_\beta \solu=  \CurlyU |r\nablaslash\solu|^2 + \p_r \left(\frac{r^4}{2\Delta}  \right) zwf (\dt\solu)^2 , \quad \CurlyU=\frac12 \fnMnb\fnMncClassical^2.
\end{align*}
With the choice of $z$ and $w$, we have $$\CurlyALzZero=\frac{M\KDelta^2}{r^4}.$$

As in \cite{A-Blue-wavekerr}, we define 
\begin{equation*}
\mathcal{B}^\alpha=\CurlyU\mathcal{L}^{\alpha\beta}\p_{\beta}\solu\cdot\solu,
\end{equation*}
where the second order operator $\mathcal{L}=\p^2_t + \mathring{\laplacianslash}_{S^2}$.
With the boundary term $\mathcal{B}^\alpha$, we have

\begin{equation}\label{bulk-Morwetz-re}
D_\alpha \left( P^\alpha[\solu, \vecMclassical,q]-\mathcal{B}^\alpha \right)
= \CurlyALzZero (\dr\solu)^2 + \CurlyULzZero^{\alpha\beta} \partial_\alpha\solu \partial_\beta\solu +\CurlyVLzZero|\solu|^2.
\end{equation}
Here $$\CurlyU^{\alpha\beta}=\CurlyU \mathcal{L}^{\alpha\beta},$$ (we still denote it by $\CurlyU^{\alpha\beta}$ without confusion) and $\CurlyALzZero, \CurlyVLzZero$ are the same as above.

For the junk term $\CurlyULzZero^{\alpha\beta} \partial_\alpha\solu \partial_\beta\solu$ in \eqref{bulk-Morwetz-re}, recalling that $$\CurlyU^{\alpha\beta}=\frac12 \fnMnb\fnMncClassical^2 \mathcal{L}^{\alpha\beta},$$ we have
 $$\CurlyULzZero^{\alpha\beta} \partial_\alpha\solu \partial_\beta\solu \geq \frac{1}{2} wf^2 \left( |r\nablaslash\solu|^2 +(\partial_t\solu)^2 \right).$$ Hence upon integrating over $S^2(t,r)$, using the fact that the spectrum of $-\mathring{\laplacianslash}_{S^2}$ acting on functions with $\ell \geq 2$ is $\ell(\ell+1)\geq 6,$ we have 
\begin{equation}\label{U-lower}
\int_{S^2(t, r)} \CurlyULzZero^{\alpha\beta} \partial_\alpha\solu \partial_\beta\solu \geq \int_{S^2(t, r)} 6 \uu |\solu|^2,
\end{equation}
where $\uu$ is 
\begin{equation}\label{U}
\uu= \frac{1}{3}\frac{(r-3M)^2}{r^3}.
\end{equation}
Putting this together we have for some constant $\epsilon>0$, 
\begin{equation}\label{bulk-Morwetz-re-1}
\begin{split}
D_\alpha \left( P^\alpha[\solu, \vecMclassical,q]-\mathcal{B}^\alpha \right)
\geq {}&{}  \epsilon \left( \CurlyALzZero (\dr\solu)^2 +(\CurlyVLzZero + 6\uu)|\solu|^2 \right) \\
{}&{} +(1-  \epsilon) \CurlyU \mathcal{L}^{\alpha\beta} \partial_\alpha\solu \partial_\beta\solu.
\end{split}
\end{equation}
Hence, the Morawetz estimate reduces to prove the Hardy inequality
\begin{equation}\label{Hardy}
\int_{2M}^{\infty} \left( A |\p_r\varphi|^2+ V |\varphi|^2 \right)  \di r \geq \epsilon_{\text{Hardy}} \int_{2M}^{\infty}  \left( \frac{\Delta^2}{r^4} |\p_r\varphi|^2+\frac{\varphi^2}{r^2} \right) \di r.
\end{equation}
As in the proof of Lemma 3.12 in \cite{A-Blue-wavekerr}, one has to show that there is a positive $C^2$ solution of the ordinary differential equation
\begin{equation*}
-\partial_r(A \partial_r) \phi + V\phi =0.
\end{equation*}

\subsection{Integrated decay estimate}\label{proof of integrated decay}
We would like to introduce a lemma relating the above ODE and the hypergeometric functions.
Recall that
\begin{align*}
\KDelta&=r^2-2Mr .
\end{align*}

\begin{lemma}\label{lemma-hypergeometric-tansformations}
Let $A=M\frac{\KDelta^2}{r^4}$ and $V=\frac{M}{r^4}(V_2r^2+V_1Mr^1+V_0M^2)$. 

Let
\begin{align*}
\alpha{}=&\frac12 +\frac{\sqrt{4V_2+2V_1+V_0+1}}{2} ,\\
\beta{}=&\frac12-\frac{\sqrt{9+V_0}}{2} ,\\
a{}=& \frac{1+\sqrt{1+4V_2+2V_1+V_0}-\sqrt{9+V_0}-\sqrt{1+4V_2}}{2},\\
b{}=& \frac{1+\sqrt{1+4V_2+2V_1+V_0}-\sqrt{9+V_0}+\sqrt{1+4V_2}}{2},\\
c{}=& 1+\sqrt{1+4V_2+2V_1+V_0} .
\end{align*}
Assume none of $c$, $c-a-b$, and $a-b$ are integers. 

For the ODE
\begin{align}
-\partial_r(A\partial_r\phi) +V\phi = 0 ,
\label{eq:CoreODE}
\end{align}
a pair of fundamental solutions, which we call the Frobenius solutions adapted to $r=2M$, is
\begin{align*}
&(r-2M)^\alpha r^\beta F\left(a,b,c,-\frac{r-2M}{2M}\right) ,\\
&(r-2M)^\alpha r^\beta \left(-\frac{r-2M}{2M}\right)^{1-c}F\left(a-c+1,b-c+1,2-c,-\frac{r-2M}{2M}\right) . 
\end{align*}
The second can also be expressed as
\begin{align*}
(r-2M)^\alpha r^\beta \left(-\frac{r-2M}{2M}\right)^{1-c}
\left(\frac{r}{2M}\right)^{c-a-b}
F\left(1-a,1-b,2-c,-\frac{r-2M}{2M}\right). 
\end{align*}
Another pair of fundamental solutions, which we call the Frobenius solutions adapted $r=\infty$, is
\begin{align*}
&(r-2M)^\alpha r^\beta \left(-\frac{r-2M}{2M}\right)^{-a}F\left(a,a-c+1,a-b+1,-\frac{2M}{r-2M}\right) ,\\
&(r-2M)^\alpha r^\beta \left(-\frac{r-2M}{2M}\right)^{-b}F\left(b,b-c+1,b-a+1,-\frac{2M}{r-2M}\right) .
\end{align*}
\end{lemma}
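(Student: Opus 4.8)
The plan is to recognise \eqref{eq:CoreODE} as a Fuchsian equation with exactly three regular singular points and to reduce it to the hypergeometric equation \eqref{hypergeometric-differential-equation} by a M\"obius change of the independent variable combined with a Frobenius gauge transformation. First I would simplify the leading coefficient, $A = M\Delta^2/r^4 = M(r-2M)^2/r^2$, and rewrite \eqref{eq:CoreODE} in normal (non-divergence) form
$$\phi'' + \frac{4M}{r(r-2M)}\phi' - \frac{V_2 r^2 + V_1 M r + V_0 M^2}{r^2(r-2M)^2}\phi = 0 .$$
Its only singularities are the regular singular points $r=2M$, $r=0$ and $r=\infty$, so by the classical theory it is projectively equivalent to \eqref{hypergeometric-differential-equation}; the whole content of the lemma is to make this equivalence explicit and to write down the resulting parameters and fundamental solutions.

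Next I would read off the indicial equation at each singular point: at $r=2M$ the exponents are the roots of $\rho^2+\rho-\tfrac14(4V_2+2V_1+V_0)=0$, at $r=0$ the roots of $\rho^2-3\rho-\tfrac14 V_0=0$, and at $r=\infty$ the roots of $\rho(\rho+1)-V_2=0$. These three indicial equations produce exactly the discriminants $\sqrt{4V_2+2V_1+V_0+1}$, $\sqrt{9+V_0}$ and $\sqrt{1+4V_2}$ appearing in the definitions of $\alpha,\beta,a,b,c$, and the Fuchs relation that the six exponents sum to $1$ is a useful consistency check. I would then apply the M\"obius substitution $z=-\tfrac{r-2M}{2M}$, which sends $(2M,0,\infty)\mapsto(0,1,\infty)$, and factor out a Frobenius prefactor of the form $(r-2M)^\alpha r^\beta$, equivalently $z^\alpha(1-z)^\beta$ up to constants, adapted to the indicial exponents at $z=0$ and $z=1$. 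Substituting $\phi=(r-2M)^\alpha r^\beta\, w(z)$ and verifying that the surviving factor $w$ solves \eqref{hypergeometric-differential-equation} then determines the parameters by matching exponents: $c$ from the exponent gap at $z=0$, $c-a-b$ from the gap at $z=1$, and $\{a,b\}$ from the pair of exponents at $z=\infty$. Solving this small linear system in the three square roots (using their sum and difference) yields the stated closed forms for $a$, $b$ and $c$.

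Finally, under the hypothesis that none of $c$, $c-a-b$, $a-b$ is an integer, the two exponents at each singular point differ by a non-integer, so the numerically satisfactory fundamental pairs listed in Section \ref{Fundamental Solutions} apply verbatim. Transcribing \eqref{f-s-asym-0} and \eqref{f-s-asym-infty} back through the gauge factor $(r-2M)^\alpha r^\beta$ and the change of variable $z=-\tfrac{r-2M}{2M}$ produces the two Frobenius pairs adapted to $r=2M$ and to $r=\infty$, and the alternative expression for the second solution at $r=2M$ is precisely Euler's transformation $F(A,B;C;z)=(1-z)^{C-A-B}F(C-A,C-B;C;z)$ applied to $F(a-c+1,b-c+1;2-c;z)$. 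The one genuinely delicate step, and the one most prone to sign and bookkeeping slips, is the exponent matching: one must track how the M\"obius map and the prefactor shift all three pairs of exponents simultaneously and then invert those relations to obtain $a,b,c$ in closed form. I would use the Fuchs relation together with the independently computed exponents at $r=\infty$ as cross-checks to keep this accounting under control.
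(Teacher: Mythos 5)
Your overall strategy---recognising \eqref{eq:CoreODE} as Fuchsian with regular singular points at $r=2M,0,\infty$, moving them to $0,1,\infty$ by $z=-\tfrac{r-2M}{2M}$, factoring out a Frobenius prefactor, and determining $a,b,c$ by matching exponent gaps---is sound, and your bookkeeping is largely right: the three indicial equations you write down are correct, the Fuchs relation checks out ($-1+3-1=1$), the exponent-difference matching does reproduce exactly the stated closed forms for $a$, $b$, $c$, and your identification of the alternative second solution with Euler's transformation $F(A,B;C;z)=(1-z)^{C-A-B}F(C-A,C-B;C;z)$ is exactly correct (here $1-z=r/(2M)$ and $C-A-B=c-a-b$). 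The paper takes a slightly different route: it first performs the Liouville substitution $v=A^{1/2}u$, $x=r-2M$ of \eqref{hypergeometric-tranform} to reach the potential form \eqref{ode-transformed}, and only then factors $v=x^{\alpha}(x+2M)^{\beta}\tilde v$ and sets $x=-2Mz$, fixing $\alpha,\beta$ by forcing coefficient conditions rather than by quoting indicial theory.

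There is, however, a concrete inconsistency in your proposal that your own indicial computations expose. At $r=2M$ the exponents of a solution $\phi$ of \eqref{eq:CoreODE} are $\tfrac{-1\pm\sqrt{1+4V_2+2V_1+V_0}}{2}$, i.e.\ $\alpha-1$ and $\alpha-c$, and at $r=0$ they are $\tfrac{3\pm\sqrt{9+V_0}}{2}$, i.e.\ $\beta+1$ plus its partner; so the lemma's $\alpha$ and $\beta$ are \emph{not} indicial exponents of \eqref{eq:CoreODE}---they are shifted by $+1$ and $-1$ respectively (they are the indicial exponents of the Liouville-transformed equation \eqref{ode-transformed}, since $A^{1/2}\propto (r-2M)/r$). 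Consequently your announced substitution $\phi=(r-2M)^{\alpha}r^{\beta}w(z)$ cannot "verify that $w$ solves \eqref{hypergeometric-differential-equation}": with that prefactor the local exponents of $w$ at $z=0$ are $\{-1,-c\}$ rather than $\{0,1-c\}$, so the verification step fails as stated. Carried out with the true exponents, your method yields the fundamental pair $(r-2M)^{\alpha-1}r^{\beta+1}F(\cdots)\propto A^{-1/2}(r-2M)^{\alpha}r^{\beta}F(\cdots)$; the functions displayed in the lemma differ from these by the factor $A^{1/2}=\sqrt{M}\,(r-2M)/r$ and are, strictly, solutions of \eqref{ode-transformed} in the variable $x$---which is how the paper itself constructs and later uses them, the factor $A^{-1/2}$ reappearing in \eqref{Hypergeometic fun}. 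Since $A^{1/2}>0$ on $(2M,\infty)$, this discrepancy is harmless for the positivity and Hardy-inequality applications, but a correct write-up must either pass through $v=A^{1/2}\phi$ first, as the paper does, or use the shifted prefactor $(r-2M)^{\alpha-1}r^{\beta+1}$ and record explicitly how the resulting solutions relate to the lemma's display.
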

\begin{proof}
We follow the argument from \cite{A-Blue-wavekerr}. Let
\begin{equation}\label{hypergeometric-tranform}
\begin{split}
v&=A^{1/2} u,\\
x&=r-2M.
\end{split} 
\end{equation}
The ODE \eqref{eq:CoreODE} then becomes 
and find that the resulted ordinary differential equation 
\begin{equation}\label{ode-transformed}
-\partial_x^2v+Wv=0,
\end{equation}
and
\begin{align*}
W{}=&\frac{V}{A} +\frac12\frac{\partial_x^2A}{A}-\frac{(\partial_x A)^2}{A^2} \\
=&\frac{V_2r^2  +(V_1-4)Mr +(V_0+8)M^2}{r^2(2M-r)^2} \\
=&\frac{V_2x^2 +(4V_2+V_1-4)Mx +(4V_2+2V_1+V_0)M^2}{x^2(x+2M)^2} .
\end{align*}
Now, let $\tilde{v}$ be such that $v=x^\alpha (x+d)^{\beta}\tilde{v}$, so that the above ODE becomes\footnote{Observe that equation (3.23) in \cite{A-Blue-wavekerr} is missing a minus sign in front of $x^2(x+d)^2\partial_x^2\tilde{v}$, but the rest of the argument there is correct.}
\begin{align*}
0{}=&x^{\alpha-2}(x+d)^{\beta-2}P,\\
P{}=&-x^2(x+d)^2\partial_x^2\tilde{v}\\
&-2x(x+d)((\alpha+\beta)x+\alpha d)\partial_x\tilde{v}\\
&+\big(-(\alpha(\alpha-1)(x+d)^2+2\alpha\beta x(x+d)+\beta(\beta-1)x^2)
+W x^2(x+d)^2\big) \tilde{v} .
\end{align*}

Let $d=2M$, so that
\begin{align*}
W x^2(x+d)^2 
{}=& V_2x^2 +(4V_2+V_1-4)Mx +(4V_2+2V_1+V_0)M^2 ,
\end{align*}
and the coefficient of $\tilde{v}$ in $P$ above is
\begin{align*}
&(-\alpha(\alpha-1)-2\alpha\beta-\beta(\beta-1)+V_2)x^2 \\
&+(-4\alpha(\alpha-1)-4\alpha\beta+4V_2+V_1-4)Mx\\
&+(-4\alpha(\alpha-1)+4V_2+2V_1+V_0)M^2 .
\end{align*}
We choose $\alpha$ so that the $M^2$ coefficient vanishes, i.e.
\begin{align*}
0{}=&-4\alpha(\alpha-1)+4V_2+2V_1+V_0 \\
=&-4\alpha^2+4\alpha +(4V_2+2V_1+V_0) ,\\
\alpha{}=&\frac12 \pm\frac{\sqrt{4V_2+2V_1+V_0+1}}{2} .
\end{align*}
From the second line above, we also have the identity $-\alpha(\alpha-1)=-(4V_2+2V_1+V_0)/4$. We now choose $\beta$ so that the ratio of the $Mx$ coefficient to the $x^2$ coefficient is $2$, i.e.
\begin{align*}
2
{}=&\frac{-4\alpha(\alpha-1)-4\alpha\beta+4V_2+V_1-4}{-\alpha(\alpha-1)-2\alpha\beta-\beta(\beta-1)+V_2}\\
-2\alpha(\alpha-1)-4\alpha\beta-2\beta(\beta-1)+2V_2
{}=&-4\alpha(\alpha-1)-4\alpha\beta+4V_2+V_1-4\\
-2\beta(\beta-1)
{}=&-2\alpha(\alpha-1)+2V_2+V_1-4\\
=&\left(-2V_2-V_1-\frac12V_0\right)+2V_2+V_1-4\\
=&-\frac12V_0-4,\\
0{}=&4\beta^2-4\beta+\left(-V_0-8\right)\\
\beta&=\frac12\pm\frac{\sqrt{9+V_0}}{2}.
\end{align*}
We choose the $+$ sign in $\alpha$ and $-$ sign in $\beta$. 

The substitution $x=-2Mz$ now yields the ODE
\begin{align*}
0{}
=&z(1-z)\partial_z^2 \tilde{v}
+(2\alpha -(2\alpha+2\beta)z)\partial_z\tilde{v}
-(2\alpha\beta+1+V_0/2+V_1/2)\tilde{v} .
\end{align*}
This is now in the form of a standard hypergeometric differential equations, and the corresponding parameters thus satisfy
\begin{align*}
c{}=&2\alpha ,\\
a+b+1{}=&2\alpha+2\beta ,\\
ab{}=&(2\alpha\beta+1+V_0/2+V_1/2) ,
\end{align*}
which implies the parameters $a$ and $b$ are 
\begin{align*}
a{}=&\alpha+\beta-\frac12-\frac{\sqrt{4\alpha(\alpha-1)+4\beta(\beta-1)-2V_1-2V_0-7}}{2},\\
b{}=&\alpha+\beta-\frac12+\frac{\sqrt{4\alpha(\alpha-1)+4\beta(\beta-1)-2V_1-2V_0-7}}{2} .
\end{align*}
Thus, the parameters are
\begin{align*}
a{}=& \frac{1+\sqrt{1+4V_2+2V_1+V_0}-\sqrt{9+V_0}-\sqrt{1+4V_2}}{2},\\
b{}=& \frac{1+\sqrt{1+4V_2+2V_1+V_0}-\sqrt{9+V_0}+\sqrt{1+4V_2}}{2},\\
c{}=& 1+\sqrt{1+4V_2+2V_1+V_0} .
\end{align*}

Several forms for solutions to the hypergeometric function are given in \cite{AbramowitzEtAl}. Reversing all the substitutions made so far in the proof, one finds a pair of fundamental solutions adapted to $z=0$ (i.e. $r=2M$) is
\begin{align*}
&(r-2M)^\alpha r^\beta F\left(a,b,c,-\frac{r-2M}{2M}\right) ,\\
&(r-2M)^\alpha r^\beta \left(-\frac{r-2M}{2M}\right)^{1-c} 
F\left(a-c+1,b-c+1,2-c,-\frac{r-2M}{2M}\right).
\end{align*}
An alternative way of writing the second solution is
\begin{align*}
(r-2M)^\alpha r^\beta \left(-\frac{r-2M}{2M}\right)^{1-c}\left(\frac{r}{2M}\right)^{c-a-b}
F\left(1-a,1-b,2-c,-\frac{r-2M}{2M}\right).
\end{align*}
Another pair of fundamental solutions adapted to $z=\infty$ (i.e. $r$ approaching the point at infinity on the Riemann sphere except along the negative real axis) is 
\begin{align*}
&(r-2M)^\alpha r^\beta \left(-\frac{r-2M}{2M}\right)^{-a}F\left(a,a-c+1,a-b+1,-\frac{2M}{r-2M}\right) ,\\
&(r-2M)^\alpha r^\beta \left(-\frac{r-2M}{2M}\right)^{-b}F\left(b,b-c+1,b-a+1,-\frac{2M}{r-2M}\right) .
\end{align*}
\end{proof}

In this section, we will prove the integrated decay estimate for both Regge-Wheeler and zerilli equations.

\subsubsection{The Regge-Wheeler case}\label{proof of integrated decay-RW}
\begin{proof}[Proof of Theorem \ref{Integrated decay estimate} for {\bf Regge-Wheeler case}]

We first prove the statement for Regge-Wheeler case, where we have
\begin{align}\label{A-V}
\CurlyALzZero={}&{}\frac{M\KDelta^2}{r^4},\\
\CurlyVLzZero= {}&{}-\frac{5}{2}Mr^{-2}+15M^2r^{-3}-23M^3r^{-4}.
\end{align}
Denoting $\underline{V}=\CurlyVLzZero + 6\uu \cdot \frac{2M}{r},$  we have the lower bound for the Morawetz potential $\CurlyVLzZero + 6\uu  \geq \underline{V}$ and
\begin{equation}\label{V-RW}
\underline{V}= \frac{3}{2}Mr^{-2}-9M^2r^{-3}+13M^3r^{-4}.
\end{equation}
Recalling the $\CurlyALzZero$ \eqref{A-V} and $\underline{V}$ \eqref{V-RW}, we let 
\begin{equation}\label{def-A-V-RW}
A=  \CurlyALzZero, \quad V= \underline{V}.
\end{equation}
Now we apply the same transformations \eqref{hypergeometric-tranform} and find that the resulted ordinary differential equation \eqref{ode-transformed} has a solution taking the following form
\begin{equation}\label{Hypergeometic fun}
u=A^{-\frac{1}{2}}(r-2M)^{\alpha}r^{\beta}F(a;b;c;z),
\end{equation}
where $z=-\frac{r-2M}{2M},$ and $F(a;b;c;z)$ is the associated hypergeometric function.
 We follows Lemma \ref{lemma-hypergeometric-tansformations} to choose the parameters,
\begin{equation*}
\alpha=\frac{1 \pm \sqrt{2}}{2}, \quad \beta=\frac{1 \pm \sqrt{22}}{2}.
\end{equation*}
We take the $+$ sign in $\alpha$ and the $-$ sign in $\beta$.
Then we can immediately read off some quantities in terms of $\alpha$ and $\beta$,
\begin{align*}
c&=1+\sqrt{2},\\
a+b&=1+\sqrt{2}-\sqrt{22}, \\
ab&= \frac{1+\sqrt{2}-\sqrt{22} - 2\sqrt{11} }{2} + 4.
\end{align*}
The remaining two parameters $a,b$ could be solved
\begin{align*}
\{ a , b \}=\{ \frac{1+\sqrt{2}-\sqrt{22} \pm \sqrt{7}}{2} \}.
\end{align*}
We choose $a<b$  such that
\begin{equation*}
a<-2.4<0<0.1<b<0.2<2.0<c.
\end{equation*}
We can use the integral representation \eqref{eq:Hardy-Hypergeometric-Integral-Representation} to show that
the hypergeometric function $F(a;b;c;z)$ with $z<0$ is positive. This further leads to the Hardy estimate \eqref{Hardy}.
Hence the integrated decay estimate follows,
\begin{equation*}
\doubleint_{\M} \frac{\Delta^2}{r^4} |\p_r\solu|^2+\frac{|\solu|^2}{r^2} +\localiseAwayFromPhotonOrbits \left( \frac{|\p_t\solu|^2}{r}  + \frac{ |r\nablaslash\solu|^2}{r} \right) \di t \di r \di \sigma_{S^2} \lesssim E^T(\solu, \tau_0).
\end{equation*}
That is,
\begin{equation}\label{Morawetz-0}
\doubleint_{\M}\frac{\Delta^2}{r^6} |\p_r\solu|^2+\frac{ |\solu|^2}{r^4}+\localiseAwayFromPhotonOrbits \left( \frac{|\p_t\solu|^2}{r^3} + \frac{|r \nablaslash\solu|^2}{r^3} \right) \di \mu_g \lesssim E^T(\solu, \tau_0).
\end{equation}
Therefore we complete the proof for the Regge-Wheeler case.
\end{proof}

\subsubsection{The Zerilli case}\label{proof of integrated decay-Zerilli}

Let $V_g^{RW}$ denote the potential of Regge-Wheeler equation, and $V_g^{Z}$ the Zerilli potential. They are related by $V_g^{Z}=V_g^{RW}(1+\zeta),$ with 
\begin{equation}\label{epsilon}
\zeta= \frac{2\bar\lambda+3}{4\bar\lambda}\left(9 \left( \frac{1}{2}-\frac{M}{\Lambda} \right)^2-\frac{1}{4} \right) -1.
\end{equation}
Here 
\begin{equation}\label{def-bar-lambda}
\Lambda=\bar\lambda r +3M,\,\,\, 2\bar\lambda=\ell(\ell+1)-2.
\end{equation}
Noting that $\ell\geq 2$, we have $\bar\lambda\geq 2$.  We calculate
\begin{equation}\label{dr-epsilon-1}
\partial_r \zeta= \frac{9M \left( 2\bar\lambda +3 \right) }{2} \left( \frac{1}{2} - \frac{M}{\Lambda} \right)\Lambda^{-2}.
\end{equation}
It is easy to check that $$\dr \zeta >0.$$

Before proceeding to the proof for Zerilli case, we first state the main idea and main steps for the proof.
In the Zerilli case, it would be difficult to find a smoothly lower bound for the Morawetz potential $\mathcal{V}+6\uu$ \eqref{bulk-Morwetz-re-1} in the whole region $[2M, \infty)$. The key point is: we are allowed to relax the requirement of smoothly lower bound, and find a continuously lower bound $V_{joint}$ for the Morawetz potential. Then, similar to the Regge-Wheeler case, we could work on the second order ordinary differential equation \eqref{eq:CoreODE} with the potential $V$ being replaced by $V_{joint}$, and find a positive $C^2$ solution.  The Hardy inequality then follows and hence the integrated decay estimate. 

The first step is to separate the estimate in the two regions $[2M, 3M]$ and $(3M, \infty)$ and find a $C^0$ lower bound $V_{joint}$ for the Morawetz potential. Note that, the lower bounding potential $V_{joint}$ is chosen such that the second order ordinary differential equation \eqref{eq:CoreODE} could be transformed to  hypergeometric differential equations in each of the two intervals.  In step two, we will analyze the hypergeometric differential equation associated to the ODE \eqref{eq:CoreODE}, and find out the Frobenius solutions (adapted to $x=0$) in $[2M, 3M]$ and $(3M, \infty)$. The Frobenius solutions (adapted to $x=\infty$) follow by making some transformations on the old Frobenius solutions (adapted to $x=0$).
 At last, in step three, we will construct a $C^1$ solution to the hypergeometric differential equation, which comes from linear combinations of the Frobenius solutions (adapted to $x=0$). We will show that this solution is  $C^2$ and positive by finding a positive lower bound $G(x)$. The construction of $G(x)$ is based on an observation that the ratio of two Frobenius solutions (adapted to $x=0$) has a limitation at infinity. Remarkably, further analysis for the asymptotically expansion at infinity shows that $G(x)$ is proportional to one of the Frobenius solutions adapted to $x=\infty$. We can further make use of the integral representation \eqref{eq:Hardy-Hypergeometric-Integral-Representation} to show the positivity of $G(x)$.

{\bf Step I: The continuous lower bound for the Morawetz potential.}
\begin{lemma}[Lower bound for the Morawetz potential]\label{lemma-Lower bound for the potential}
In the Zerilli case, we have the lower bound for the Morawetz potential $\mathcal{V}+6\uu$ in \eqref{bulk-Morwetz-re-1} as the following:

For case I, $r\leq 3M$ 
\begin{equation}\label{bound-less-3M-general}
\mathcal{V}+6\uu\geq  \underline{V}_{r\leq 3M} \doteq \frac{5M}{2r^2}  - \left(13 + \frac{2}{3} \right) \frac{M^2}{r^3}  +  \frac{18M^3}{r^4}.
\end{equation}

For case II, $r\geq 3M$ 
\begin{equation}\label{bound-greater-3M-general}
\mathcal{V}+6\uu\geq   \underline{V}_{r\geq 3M} \doteq \frac{M}{2r^2} -\frac{4M^2}{r^3}  + \frac{7M^3}{r^4}.
\end{equation}

In particular, we have
\begin{equation}\label{compare-v-3m}
\left(\underline{V}_{r\leq 3M} - \underline{V}_{r\geq 3M} \right) \big|_{r=3M} =0.
\end{equation} 
\end{lemma}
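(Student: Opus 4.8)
The plan is to reduce the Zerilli bound to the already-understood Regge-Wheeler potential plus a correction controlled by $\zeta$, and to exploit that this correction carries an explicit factor $(r-3M)$.

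First I would isolate the $\zeta$-dependence. Since $V_g^Z=V_g^{RW}(1+\zeta)$, only the term $\tfrac12 wf\,\partial_r(zV)$ in $\mathcal{V}$ differs between the two cases, so
\begin{equation*}
\mathcal{V}^{Z}-\mathcal{V}^{RW}=\tfrac12 wf\,\partial_r\!\left(zV_g^{RW}\zeta\right).
\end{equation*}
Inserting $w=r^5/6$, $f=\partial_r z=-2(r-3M)/r^4$, $z=(r-2M)/r^3$, and $V_g^{RW}=-8M/r^3$, and applying the product rule gives, after simplification,
\begin{equation*}
\mathcal{V}^{Z}-\mathcal{V}^{RW}=\frac{4M(r-3M)}{3r^6}\,B(r),\qquad B(r):=-(5r-12M)\zeta+r(r-2M)\,\partial_r\zeta.
\end{equation*}
The factor $(r-3M)$, inherited from the Morawetz weight $wf=-\tfrac13 r(r-3M)$, is the structural reason to split at $r=3M$: the correction vanishes there, so $\mathcal{V}^{Z}+6\uu$ agrees with $\mathcal{V}^{RW}+6\uu$ at $r=3M$, which is precisely what forces the two candidate bounds to coincide and yields \eqref{compare-v-3m}.

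Next I would discard the harmless part of $6\uu$ exactly as in the Regge-Wheeler case: since $6\uu(1-\tfrac{2M}{r})\ge0$, we have $\mathcal{V}^{Z}+6\uu\ge \mathcal{V}^{Z}+6\uu\cdot\tfrac{2M}{r}=\underline{V}^{RW}+\tfrac{4M(r-3M)}{3r^6}B(r)$, where $\underline{V}^{RW}$ is the Regge-Wheeler lower bound \eqref{V-RW}. This rewriting is what keeps the bound of the form $\tfrac{M}{r^4}(V_2r^2+V_1Mr+V_0M^2)$ needed to invoke Lemma \ref{lemma-hypergeometric-tansformations}. The differences $\underline{V}_{r\le3M}-\underline{V}^{RW}=\tfrac{M(r-3M)(r-\frac53 M)}{r^4}$ and $\underline{V}_{r\ge3M}-\underline{V}^{RW}=-\tfrac{M(r-2M)(r-3M)}{r^4}$ both factor through $(r-3M)$; dividing by $M(r-3M)/r^4$ (and flipping the inequality for $r<3M$) reduces the lemma to the two scalar estimates
\begin{equation*}
B(r)\ge-\tfrac34 r^2(r-2M)\ \ (r>3M),\qquad B(r)\le\tfrac14 r^2(3r-5M)\ \ (2M\le r<3M),
\end{equation*}
to be established uniformly in $\bar\lambda\ge2$.

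To prove these I would use the qualitative information already available: $\partial_r\zeta>0$ by \eqref{dr-epsilon-1}, together with the endpoint values $\zeta|_{r=2M}=-\tfrac{3}{2(2\bar\lambda+3)}$ and $\zeta\to\tfrac{3}{2\bar\lambda}$ as $r\to\infty$ computed from \eqref{epsilon}, both monotone in $\bar\lambda$ and hence extremised at $\bar\lambda=2$. Bounding $\zeta$ between its endpoint values on each region and estimating $r(r-2M)\partial_r\zeta$ from the explicit form of $\partial_r\zeta$ produces rational majorants and minorants for $B(r)$; one then chooses the constant coefficients so that the two resulting bounds have the hypergeometric form and agree at $r=3M$, recovering exactly $\underline{V}_{r\le3M}$ and $\underline{V}_{r\ge3M}$.

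I expect the main obstacle to be the uniform control in $\bar\lambda$: the quantity $\zeta$ depends on $r$ only through $\Lambda=\bar\lambda r+3M$, which does not combine with the powers of $r$ in $B(r)$, so the two estimates for $B$ are genuinely two-variable inequalities. The delicate point is to distill, from the monotonicity in both $r$ and $\bar\lambda$, a single pair of rational bounds that are simultaneously valid for all $\bar\lambda\ge2$, continuous across $r=3M$, and of the special form $\tfrac{M}{r^4}(V_2r^2+V_1Mr+V_0M^2)$ demanded by the hypergeometric transformation — rather than $\bar\lambda$-dependent ones. The worst case is anticipated at $\bar\lambda=2$, where $\zeta$ has the largest amplitude, and verifying that the chosen coefficients still give true bounds there is where the care is needed.
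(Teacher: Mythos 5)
Your overall architecture is recognizably the same as the paper's: you split at $r=3M$ because $wf=-\tfrac{1}{3}r(r-3M)$ vanishes there (which is also the correct explanation of \eqref{compare-v-3m}), and you plan to control $\zeta$ and $\dr\zeta$ uniformly in $\bar\lambda\geq 2$ using their monotonicity, with the worst case at $\bar\lambda=2$; packaging the $\zeta$-dependence into a single function $B(r)$ is a legitimate reorganization of the paper's term-by-term sign analysis. However, before the main issue, note a normalization slip: you computed $B$ from the literal $z=\Delta/r^4$ and $V_g^{RW}=-8M/r^3$, whereas the paper's $\mathcal{V}^{RW}$ in \eqref{A-V} and $\underline{V}$ in \eqref{V-RW} --- which you then combine with $B$ --- are consistent with $\dr(zV^{RW})=8Mr^{-5}(3r-8M)$, i.e.\ with $zV^{RW}=-8M(r-2M)/r^4$ (the potential enters carrying an extra factor of $r^2$). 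In the consistent normalization the correction is $\tfrac{4M(r-3M)}{3r^4}\tilde B$ with $\tilde B=-(3r-8M)\zeta+r(r-2M)\dr\zeta$; the tell-tale symptom in your version is that your two reduced inequalities compare $B\sim r$ against bounds $\sim r^3$, which are dimensionally inhomogeneous.

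The genuine gap is in Case II. You discard $6\uu\bigl(1-\tfrac{2M}{r}\bigr)$ on \emph{both} regions, ``exactly as in the Regge--Wheeler case.'' With the normalization fixed, your reduced requirement on $[3M,\infty)$ becomes $\tilde B\geq -\tfrac{3}{4}(r-2M)$ uniformly in $\bar\lambda\geq 2$, and this is false for the lowest modes: as $r\to\infty$ one has $\zeta\to\tfrac{3}{2\bar\lambda}$ and $r(r-2M)\dr\zeta=O(M)$, so at $\bar\lambda=2$ (i.e.\ $\ell=2$) $\tilde B=-\tfrac{3}{4}(3r-8M)+O(M)\sim-\tfrac{9}{4}r$, far below $-\tfrac{3}{4}r$; it fails at $\bar\lambda=5$ ($\ell=3$) as well. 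Equivalently, $\mathcal{V}^{Z}+6\uu\cdot\tfrac{2M}{r}\sim-\tfrac{3M}{2r^2}$ at $\bar\lambda=2$, which can never dominate $\underline{V}_{r\geq 3M}\sim+\tfrac{M}{2r^2}$, so the target \eqref{bound-greater-3M-general} is unreachable along your route. The paper avoids this precisely by keeping $6\uu\cdot\tfrac{3M}{r}$ on $r\geq 3M$ (legitimate there since $3M/r\leq 1$), which supplies the missing $\tfrac{2M}{r^2}$; its Case II bound $\tfrac12 wf\,\dr(zV^{Z})\geq\tfrac{7}{8}wf\,\dr(zV^{RW})$ is asymptotically \emph{sharp} at $\bar\lambda=2$, so there is no slack to give away. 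Your Case I reduction, by contrast, is sound once renormalized: $\tilde B\leq\tfrac14(3r-5M)$ on $[2M,3M]$ does follow from \eqref{bound-epsilon-1} and \eqref{bound-dr-epsilon}, with margin $\tfrac{3}{14}M$ against $\tfrac14 M$ at $r=3M$. But since the two key scalar inequalities were left unverified and the Case II one, as you set it up, is false, the proposal as written does not prove the lemma; the fix is the region-dependent replacement $6\uu\geq 6\uu\cdot\tfrac{3M}{r}$ for $r\geq 3M$, exactly as in the paper.
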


\begin{proof}[Proof of Lemma \ref{lemma-Lower bound for the potential}]
As in the Regge-Wheeler case, we still have the formulation \eqref{bulk-Morwetz-re-1} with $\CurlyALzZero=\frac{M\KDelta^2}{r^4}$ being the same as in \eqref{A-V}. Now we have to estimate the lower bound for $\mathcal{V}+6\uu$ in \eqref{bulk-Morwetz-re-1}.
First, we recall the formula for $\CurlyVLzZero, $  
\begin{equation}\label{eq-formula-pV}
\CurlyVLzZero= \frac{1}{4}\dr(\KDelta\dr(\fnMna\dr(\fnMnb\fnMncClassical))) + \frac{1}{2}\fnMnb\fnMncClassical\dr(zV^Z),
\end{equation} 
where the first term $\frac{1}{4}\dr(\KDelta\dr(\fnMna\dr(\fnMnb\fnMncClassical)))$ is the same as in Regge-Wheeler case. Now we consider the second term $\frac{1}{2}\fnMnb\fnMncClassical\dr(zV^Z)$, which is given by
\begin{equation}\label{eq:pVZ}
\frac{1}{2}\fnMnb\fnMncClassical \dr(zV^Z)=\frac{1}{2}\fnMnb\fnMncClassical \dr(zV^{RW})(1+\zeta)+\frac{1}{2}\fnMnb\fnMncClassical zV^{RW}\dr \zeta.
\end{equation} 
Notice that $$f=-\frac{2(r-3M)}{r^4}.$$

{\bf In case I: $2M\leq r \leq 3M.$ }
\begin{equation}\label{rho-bound}
\begin{split}
 2M(2\bar\lambda+3)\leq &2\Lambda\leq 6M(\bar\lambda+1), \\
(2\bar\lambda+2)r\leq &2\Lambda\leq (2\bar\lambda+3)r.
\end{split}
\end{equation} 
And
\begin{equation}\label{bound-epsilon-1}
-\frac{3}{14} \leq \zeta(2M)=-\frac{3}{2(2\bar\lambda+3)} \leq \zeta \leq \zeta(3M)=-\frac{1}{4(\bar\lambda+1)^2}<0.
\end{equation}
Submitting \eqref{rho-bound} into \eqref{dr-epsilon-1}, we have
\begin{equation}\label{bound-dr-epsilon}
\frac{5}{14r} \leq \dr \zeta  \leq \frac{3}{4r}.
\end{equation} 
Since $2M\leq r \leq 3M,$  we have  $wf\geq0.$ Moreover, in view of \eqref{bound-dr-epsilon} and the fact that $ zV^{RW}<0$, we have the estimate for the second term in \eqref{eq:pVZ}  
\begin{equation}\label{Zerilli-V-1}
 \frac{1}{2}\fnMnb\fnMncClassical zV^{RW}\dr \zeta<0.
\end{equation}
Let us turn to the first term $\frac{1}{2}wf\dr(zV^{RW})(1+\zeta)$ in \eqref{eq:pVZ}.
However, $\dr(zV^{RW})$ has no sign.
Hence we separate $\dr(zV^{RW})$ into positive and negative part:
\begin{equation*}
\dr(zV^{RW})=8Mr^{-5}(3r-8M) =24Mr^{-5}(r-3M) + 8M^2r^{-5},
\end{equation*}
ignoring the negative part, we use the bound of $\zeta$ in \eqref{bound-epsilon-1} to obtain,
\begin{equation*}
\frac{1}{2}wf\dr(zV^{RW}) \zeta \geq - wf \frac{6M^2}{7r^5}.
\end{equation*}
Again, using the bound of $\dr \zeta$ in \eqref{bound-dr-epsilon} to get the lower bound for \eqref{Zerilli-V-1}, we obtain
\begin{equation}\label{v-1}
\begin{split}
\frac{1}{2}wf\dr(zV^Z) &=\frac{1}{2}wf\left(\dr(zV^{RW})(1+\zeta) +zV^{RW}\dr \zeta \right)\\
& \geq  \frac{1}{2}wf \left( (3r-8M) \frac{8M}{r^5} -  \frac{12M^2}{7r^5}  -(r-2M) \frac{6M}{r^5}  \right).
\end{split}
\end{equation}

With the above estimate \eqref{v-1} for the case $2M\leq r \leq 3M,$  the lower bound of the potential $\CurlyVLzZero + 6\uu$ in the Morawetz estimate is given by
\begin{equation}\label{bound-small-3M-0}
\mathcal{V}+6\uu\geq \mathcal{V}+6\uu\frac{2M}{r} \geq \frac{5M}{2r^2}- \left(13 + \frac{5}{7} \right)\frac{M^2}{r^3} +
\left(18+\frac{1}{7} \right) \frac{M^3}{r^4}.
\end{equation}
Additionally, using the fact that $\frac{1}{3} \leq \frac{M}{r} \leq \frac{1}{2},$ we finally get the lower bound for the potential for $r\leq 3M$:
\begin{equation}\label{bound-small-3M}
\mathcal{V}+6\uu \geq \underline{V}_{r\leq 3M} \doteq \frac{5M}{2r^2}  - \left(13 + \frac{2}{3} \right) \frac{M^2}{r^3}  +  \frac{18M^3}{r^4}.
\end{equation}

{\bf In case II:  $r\geq 3M.$} We have $wf\leq 0,$ and $ \dr(zV^{RW})>0.$ Additionally,
$$\zeta \leq \frac{3}{2 \bar\lambda}\leq \frac{3}{4}.$$ Hence in \eqref{eq:pVZ}
the first term has the lower bound $$\frac{1}{2}wf\dr(zV^{RW})(1+\zeta) \geq \frac{7}{8}wf\dr(zV^{RW}).$$
In view of the fact that $zV^{RW}<0$ and $\dr \zeta \geq 0$, the second term 
\begin{equation*}
\frac{1}{2}\fnMnb\fnMncClassical zV^{RW}\dr \zeta \geq 0.
\end{equation*}
Thus, we have the lower bound
\begin{equation}\label{zerlli-v-2}
\frac{1}{2}wf\dr(zV^Z) \geq \frac{7}{8}wf\dr(zV^{RW}).
\end{equation}
We can estimate  $\mathcal{V}+6\uu$ for $r\geq 3M$ as
\begin{equation}\label{bound-large-3M}
\mathcal{V}+6\uu\geq \mathcal{V}+6\uu\frac{3M}{r} \geq \underline{V}_{r\geq 3M} \doteq \frac{M}{2r^2} -\frac{4M^2}{r^3}  + \frac{7M^3}{r^4}.
\end{equation}

In summary, for case I, $wf\geq 0$, we had found a lower bound $L_{r\leq 3M}$ for $\dr(zV^Z)$, such that
\begin{equation}\label{bound-less-3M-general-1}
\mathcal{V}+6\uu\geq  \frac{1}{4}\dr(\KDelta\dr(\fnMna\dr(\fnMnb\fnMncClassical))) + \frac{1}{2}\fnMnb\fnMncClassical L_{r\leq 3M}+6\uu\frac{2M}{r} = \underline{V}_{r\leq 3M}.
\end{equation}
For case II, $wf\leq 0$, we had found a upper bound $U_{r\geq 3M}$ for $\dr(zV^Z)$, such that
\begin{equation}\label{bound-greater-3M-general-1}
\mathcal{V}+6\uu\geq  \frac{1}{4}\dr(\KDelta\dr(\fnMna\dr(\fnMnb\fnMncClassical))) + \frac{1}{2}\fnMnb\fnMncClassical U_{r\geq 3M}+6\uu\frac{3M}{r} = \underline{V}_{r\geq 3M}.
\end{equation}
Both $wf\big|_{r=3M} =0$ and $U\big|_{r=3M}=0$ hold, it would be obviously to see that $\left(\underline{V}_{r\leq 3M} - \underline{V}_{r\geq 3M}\right)\big|_{r=3M}=0$.
In particular, we have
\begin{equation}\label{compare-v}
\underline{V}_{r\leq 3M} - \underline{V}_{r\geq 3M} = (r-3M) \left(2r-\frac{11M}{3} \right) \frac{M}{r^4},
\end{equation} which vanishes at $r=3M.$ 

\end{proof}

{\bf Step II: The hypergeometric functions associated to the Morawetz potential.}
Denote the lower bound for the Morawetz potential $\mathcal{V}+6\uu$ in both cases by
\begin{equation}\label{V1-V2}
\underline{V}_{r\leq 3M}\doteq V_{1}, \quad \text{and} \quad \underline{V}_{r\geq 3M}\doteq V_{2}.
\end{equation}
Notice that \eqref{compare-v} implies
\begin{equation}\label{v12-3M}
V_1|_{r=3M}= V_2|_{r=3M}.
\end{equation}
We therefore define the potential in the whole region $r\geq 2M$ by
\begin{equation}\label{V-global}
 V_{joint} \doteq
\begin{cases}
V_1, &\text{if } 2M \leq r \leq 3M, \\
V_2, &\text{if } r \geq 3M.
\end{cases}
\end{equation}
This is the lower bound for the Morawetz potential in the Zerilli case, and we know that $V_{joint} \in C^0$. In this case, the  Hardy type estimate is reduced to finding a positive solution to the ordinary differential equation 
\begin{equation}\label{ode-v}
-\partial_r( A \partial_r \phi) + V\phi=0,
\end{equation}
on the interval $r\in [2M, +\infty)$ with
\begin{equation} \label{A-V-Zerilli}
A=\CurlyALzZero=\frac{M\KDelta^2}{r^4}, \quad V= V_{joint}.
\end{equation}
If $\phi$ is a solution to equation \eqref{ode-v} with $A$ and $V$ being specified in the Zerilli case by \eqref{A-V-Zerilli}, we apply the transformation (see Lemma \ref{lemma-hypergeometric-tansformations})
\begin{equation} \label{u-v-tranf}
u=A^{\frac{1}{2}} \phi, \quad x=r-2M.
\end{equation}
Then $u$ solves the new ordinary differential equation
\begin{equation}\label{ode-u}
-\partial_x^2u + Wu=0,
\end{equation}
on the interval $x\in [0, +\infty)$ with 
\begin{equation}\label{W-global}
 W \doteq
\begin{cases}
W_1= \frac{1}{6}\frac{15x^2-46Mx+4M^2}{x^2(x+2M)^2}, \,\,\, &x \leq M, \\
W_2=\frac{1}{2}\frac{x^2-12Mx+2M^2}{x^2(x+2M)^2}, \,\,\, &x > M.
\end{cases}
\end{equation}

We will apply the scheme in Lemma \ref{lemma-hypergeometric-tansformations} to this case, and explore the hypergeometric functions associated to \eqref{ode-u}, \eqref{W-global} in each of  the two region $x\leq M$ and $x>M$. To do the calculation explicitly, we may set $M=1.$ Index the intervals $x\leq 1$ and $x>1$ by $i=1,2,$ respectively. Index the hypergeometric (Frobenius) solutions in each interval by $j=1, 2.$  

\begin{figure} 
\centering 
\subfigure[Positive solution $u_{11}$]{
 \label{fig:subfig:u11} 
 \includegraphics[width=2.0in]{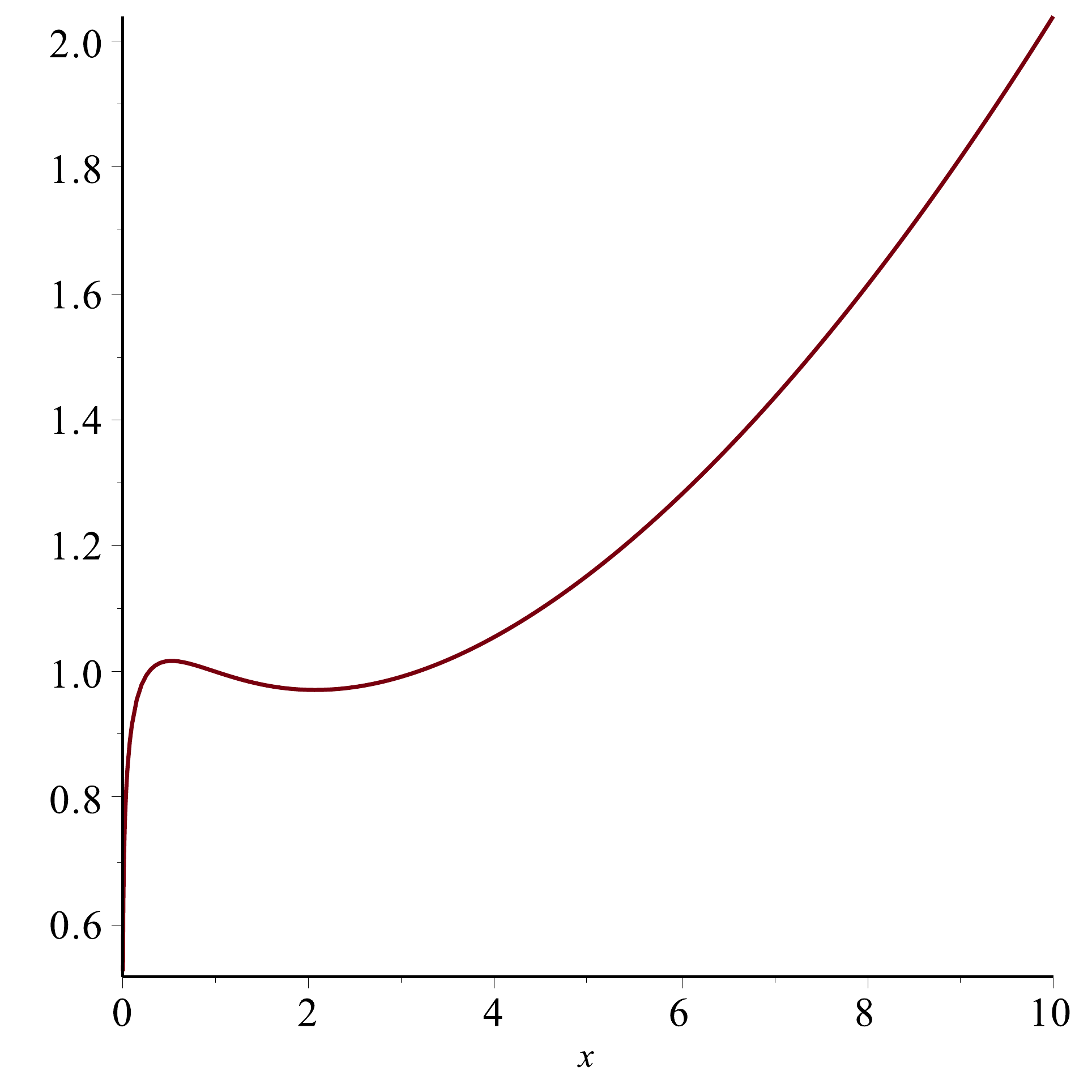}
 } 
  \hspace{0.1in} 
 \subfigure[Positive solution $u_{21}$]{ 
 \label{fig:subfig:u21} 
  \includegraphics[width=2.0in]{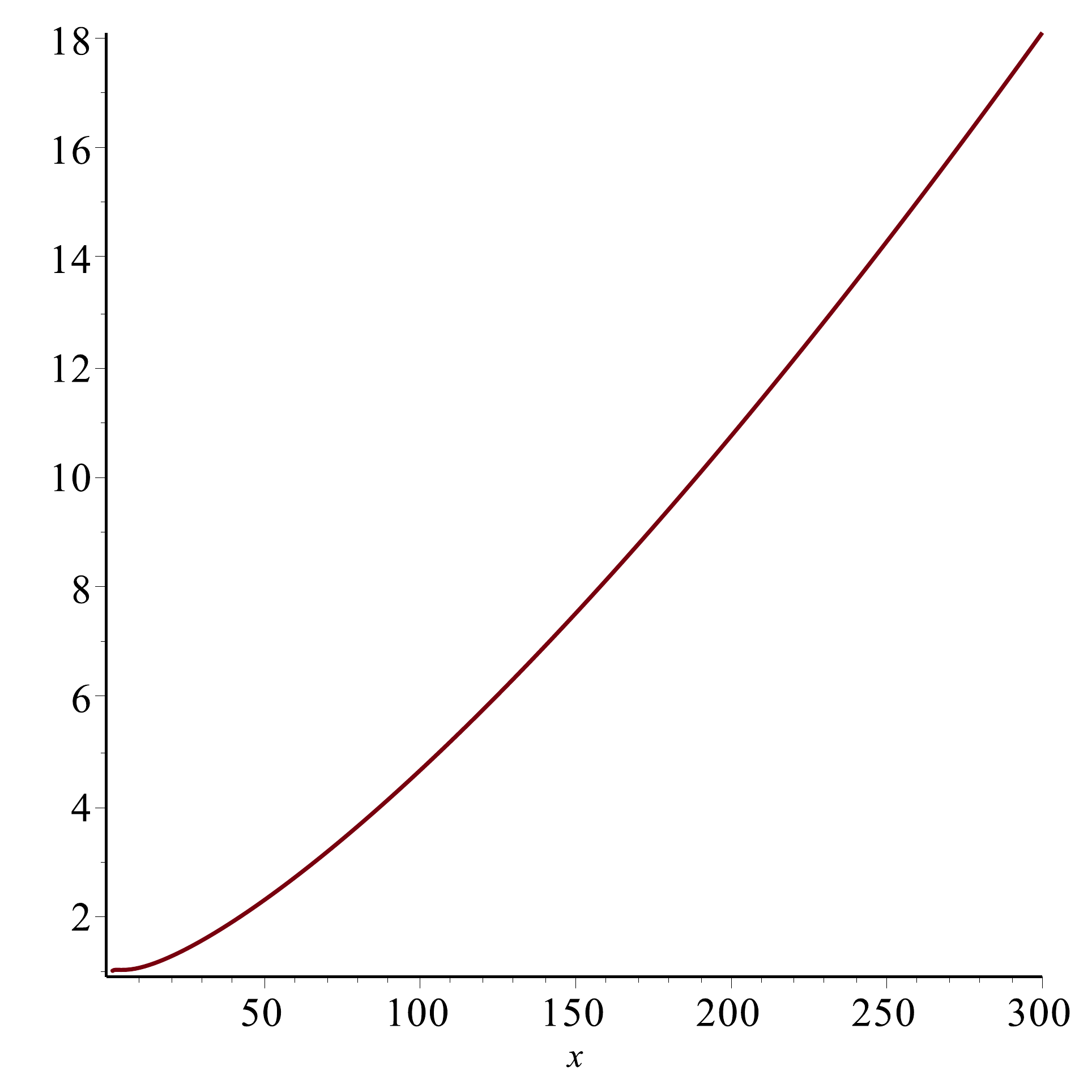}
  } 
  \caption{Positive hypergeometric functions in Zerilli case} \label{fig:subfig-u11-u21}
\end{figure}
\begin{lemma}[The hypergeometric differential equation associate to the Zerilli equations]\label{The hypergeometric differential equation associate to the Zerilli equations}
For $x \leq 1$, there are two Frobenius solutions $u_{11}, u_{12}$ (defined in \eqref{u-11-12}) to the ODE \eqref{ode-u}, \eqref{W-global} and $u_{11}$ is positive (see Figure \ref{fig:subfig:u11}).

For $x > 1$, there are two Frobenius solutions $u_{21}, u_{22}$ (defined in \eqref{u-ij} ) to the ODE \eqref{ode-u}, \eqref{W-global}  and $u_{21}$ is positive (see Figure \ref{fig:subfig:u21}).
\end{lemma}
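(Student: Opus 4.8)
The plan is to apply Lemma \ref{lemma-hypergeometric-tansformations} separately on the two intervals $x\le M$ and $x>M$, on each of which the lower‑bounding potential is a polynomial of exactly the admissible shape $\frac{M}{r^4}(V_2r^2+V_1Mr+V_0M^2)$. First I would read the three coefficients off \eqref{bound-less-3M-general} and \eqref{bound-greater-3M-general}: on $r\le 3M$ they are $(V_2,V_1,V_0)=(\tfrac52,-\tfrac{41}{3},18)$, and on $r\ge 3M$ they are $(\tfrac12,-4,7)$. Substituting these into the formula for $W$ obtained in the proof of Lemma \ref{lemma-hypergeometric-tansformations} reproduces exactly the two profiles $W_1,W_2$ in \eqref{W-global}, confirming that \eqref{ode-u} with \eqref{W-global} is the transformed equation on each piece. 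With $M=1$, the formulas of that lemma then produce the hypergeometric data: for $x\le 1$, $c=1+\sqrt{5/3}$, $\alpha=\tfrac{1+\sqrt{5/3}}{2}$, $\beta=\tfrac{1-3\sqrt3}{2}$, and $\{a,b\}=\tfrac{1+\sqrt{5/3}-3\sqrt3\mp\sqrt{11}}{2}$; for $x>1$, $c=1+\sqrt2$, $\alpha=\tfrac{1+\sqrt2}{2}$, $\beta=-\tfrac32$, and $\{a,b\}=\tfrac{-3+\sqrt2\mp\sqrt3}{2}$ (with $a$ taking the minus sign).

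Next I would record the two Frobenius solutions adapted to $x=0$ on each interval, namely $(r-2M)^\alpha r^\beta F(a,b,c,z)$ and its companion carrying the factor $z^{1-c}$, where $z=-\frac{r-2M}{2M}$, each multiplied by $\CurlyALzZero^{1/2}$ to pass from \eqref{ode-v} to the $u$–variable of \eqref{ode-u} via \eqref{u-v-tranf}; these are the objects $u_{11},u_{12}$ and $u_{21},u_{22}$. The only hypothesis needed to make this $x=0$ pair a genuine fundamental system without logarithmic terms is $c\notin\Integers$, and since $c=1+\sqrt{5/3}$ and $c=1+\sqrt2$ are irrational this holds. Here I would flag the one arithmetic subtlety: on the outer interval $c-a-b=\sqrt{9+V_0}=4$ is an integer, so the full hypothesis of Lemma \ref{lemma-hypergeometric-tansformations} fails. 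However this integer exponent difference sits at the singularity $z=1$, i.e. $r=0$, outside the domain $[2M,\infty)$, and the exponent differences actually used, $1-c$ at $z=0$ and $a-b=-\sqrt3$ at $z=\infty$, are irrational; hence the $x=0$ (and, later, $x=\infty$) Frobenius solutions remain well defined and real on the relevant range.

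The heart of the lemma is the positivity of $u_{11}$ and $u_{21}$. Since $\CurlyALzZero^{1/2}=M^{1/2}(r-2M)/r>0$ and the prefactors $(r-2M)^\alpha$, $r^\beta$ are positive for $r>2M$, it suffices to show $F(a,b,c,z)>0$ for $z=-\frac{r-2M}{2M}<0$. For this I would invoke the integral representation \eqref{eq:Hardy-Hypergeometric-Integral-Representation}, valid when $0<\Re b<\Re c$ and $z\notin[1,\infty)$. The latter is automatic since $z<0$, and the former I would verify from the explicit values: on $x\le 1$, $b=\tfrac{1+\sqrt{5/3}-3\sqrt3+\sqrt{11}}{2}\approx 0.21>0$ with $c-b\approx 2.08>0$; on $x>1$, $b=\tfrac{-3+\sqrt2+\sqrt3}{2}\approx 0.07>0$ with $c-b\approx 2.34>0$. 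With $0<b<c$ the prefactor $\Gamma(c)/(\Gamma(b)\Gamma(c-b))$ is positive, and for $z<0$ the integrand $t^{b-1}(1-t)^{c-b-1}(1-zt)^{-a}$ is strictly positive on $(0,1)$ because $1-zt=1+|z|t>1$ irrespective of the sign of $a$. Hence $F(a,b,c,z)>0$ throughout, so $u_{11}>0$ on $(2M,3M]$ and $u_{21}>0$ on $(3M,\infty)$, matching Figure \ref{fig:subfig-u11-u21}.

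Because the computation is mechanical once the parameters are in hand, the substantive step — and the only place genuine analysis enters — is controlling the sign of the hypergeometric function, which is precisely why the construction is arranged so that $z$ stays negative and the parameters satisfy $0<\Re b<\Re c$. The hard part will therefore be bookkeeping discipline rather than a deep estimate: keeping the two meanings of the symbols $V_1,V_2$ apart, and checking that the degenerate value $c-a-b=4\in\Integers$ on the outer interval does not contaminate the $x=0$ and $x=\infty$ Frobenius pairs that are actually used. I would deliberately leave the behaviour and sign of the companions $u_{12},u_{22}$ (which dominate near $r=2M$) out of this lemma, since only $u_{11},u_{21}$ are asserted positive and the companions are needed only for the $C^1$–gluing carried out in the subsequent step.
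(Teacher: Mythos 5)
Your proposal is correct and follows essentially the same route as the paper's proof: apply Lemma \ref{lemma-hypergeometric-tansformations} on each of the two intervals (your parameters agree with \eqref{abc-1}, \eqref{alpha-beta-1}, \eqref{abc-2}, \eqref{alpha-beta-2}; in fact your $b_1\approx 0.21$ is the correct numerical value, the display \eqref{abc-1-numerical-values} carries a typo ``$2.21$''), then deduce positivity of $u_{11}$ and $u_{21}$ from the Euler integral representation \eqref{eq:Hardy-Hypergeometric-Integral-Representation} using $0<b_i<c_i$ and the negativity of the argument $z=-x/2$, exactly as the paper does. Your extra observation that $c_2-a_2-b_2=4\in\Integers$ violates the blanket hypothesis of Lemma \ref{lemma-hypergeometric-tansformations} but is harmless --- since that integer exponent difference sits at the singularity $z=1$ (i.e.\ $r=0$), outside the domain, while the differences $1-c_i$ and $a_i-b_i$ actually used are irrational --- is a point the paper passes over silently and is a worthwhile refinement rather than a divergence.
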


\begin{proof}[Proof of Lemma \ref{The hypergeometric differential equation associate to the Zerilli equations}]\label{lemma-hypergeometric-ode-Zerilli}
We find that two linearly independent solutions (adapted to $x=0$) to \eqref{ode-u} in $x \leq1$ taking the form of
\begin{equation}\label{u-11-12}
\begin{split}
u_{11} &=x^{\alpha_1}(x+2)^{\beta_1}F\left(a_1,b_1,c_1;-\frac{x}{2} \right),\\
 u_{12} &=x^{\alpha_1}(x+2)^{\beta_1}x^{1-c_1}F\left(b_1-c_1+1,a_1-c_1+1,2-c_1;-\frac{x}{2}\right),
\end{split}
\end{equation}
where we follow Lemma \ref{lemma-hypergeometric-tansformations} to calculate the parameters,
\begin{equation*}
\alpha_1=\frac{1}{2} \pm \frac{\sqrt{15}}{6}, \quad \beta_1=\frac{1}{2} \pm \frac{3\sqrt{3}}{2}.
\end{equation*}
We take the $+$ sign in $\alpha_1$ and the $-$ sign in $\beta_1$. Then the $a_1, b_1, c_1$ can be immediately read off
 in terms of $\alpha_1$ and $\beta_1$,
\begin{align*}
c_1 &=\frac{\sqrt{15}}{3}+1, \\
a_1 +b_1 &= 1 + \frac{\sqrt{15}}{3} - 3\sqrt{3}, \\
a_1 b_1 &= \frac{14}{3}+\frac{\sqrt{15}}{6} - \frac{3\sqrt{5}}{2} - \frac{3\sqrt{3}}{2}.
\end{align*}
The remaining $a_1, b_1$ could be solved
\begin{equation}\label{abc-1}
\{ a_1, b_1\} = \{ \frac{3+\sqrt{15}-9\sqrt{3} \pm 3\sqrt{11}}{6} \}.
\end{equation}
We choose $a_1 < b_1$ such that
\begin{equation}\label{abc-1-numerical-values}
a_1\approx-3.11<0<b_1\approx2.21<c_1\approx 2.29.
\end{equation}
To be clear, we also give the value of $\alpha_1$ and $\beta_1$,
\begin{equation}\label{alpha-beta-1}
\alpha_1=\frac{1}{2} + \frac{\sqrt{15}}{6}, \quad \beta_1=\frac{1}{2} - \frac{\sqrt{15}}{6}, \quad \alpha_1 +\beta_1=\frac{a_1+b_1+1}{2}.
\end{equation}
For $F(a_1,b_1,c_1;-\frac{x}{2})$, its second and third parameters satisfying $c_1>b_1>0$. We can use the integral representation \eqref{eq:Hardy-Hypergeometric-Integral-Representation} to show that
\begin{equation*}
F\left(a_1,b_1,c_1;-\frac{x}{2} \right)>0， \quad \text{for} \,\,\,\,x>0,
\end{equation*}
which says that $u_{11}$ is positive.

Similarly, let $u_{2j}, j=1,2$ denote the Frobenius solutions (adapted to $x=0$) to \eqref{ode-u} in $x >1$, 
\begin{equation}\label{u-ij}
\begin{split}
 u_{21} &=x^{\alpha_2}(x+2)^{\beta_2}F\left(a_2,b_2,c_2;-\frac{x}{2}\right), \\
 u_{22} &=x^{\alpha_2}(x+2)^{\beta_2}x^{1-c_2} F\left(a_2-c_2+1,b_2-c_2+1,2-c_2;-\frac{x}{2}\right),
\end{split}
\end{equation}
where the parameters
\begin{equation*}
\alpha_2=\frac{1 \pm \sqrt{2} }{2}, \quad \beta_2=\frac{1 \pm 4}{2}.
\end{equation*}
We will follow Lemma \ref{lemma-hypergeometric-tansformations} to chose the parameters.
We take the $+$ sign in $\alpha_2$ and the $-$ sign in $\beta_2$. Then the $a_2, b_2, c_2$ can be immediately read off
 in terms of $\alpha_2$ and $\beta_2$,
\begin{align*}
c_2 &=1+\sqrt{2}, \\
a_2 +b_2 &= \sqrt{2} - 3, \\
a_2 b_2 &=2 - \frac{3\sqrt{2}}{2}.
\end{align*}
The remaining $a_2, b_2$ could be solved
\begin{equation}\label{abc-2}
\{ a_2, b_2\} = \{\frac{\sqrt{2}-3 \pm \sqrt{3}}{2} \}.
\end{equation}
We choose $a_2 < b_2$ such that
\begin{equation}\label{abc-2-numerical-values}
a_2\approx-1.66<0<b_2\approx0.07<c_2\approx2.41.
\end{equation}
We also remark that here we have
\begin{equation}\label{alpha-beta-2}
\alpha_2=\frac{1}{2}+\frac{\sqrt{2}}{2}, \quad \beta_2=-\frac{3}{2}, \quad \alpha_2+\beta_2=\frac{a_2+b_2+1}{2}.
\end{equation}
Those value will be useful in proving Theorem \ref{thm-Positive hypergeometric function for Zerilli equations} in Step III.
We find that for $F\left(a_2,b_2,c_2;-\frac{x}{2}\right)$, its second and third parameters satisfying $c_2>b_2>0$. Again we can use the integral representation \eqref{eq:Hardy-Hypergeometric-Integral-Representation} to show that 
\begin{equation*}
F\left(a_2,b_2,c_2;-\frac{x}{2}\right)>0， \quad \text{for} \,\,\,\,x>0.
\end{equation*}
That is $u_{21}$ is positive.
\end{proof}

{\bf Step III: Constructing the positive $C^2$ solution.}
We first calculate some quantities which will be useful in constructing the positive $C^2$ solution to the ordinary differential equation \eqref{ode-u}, \eqref{W-global}.
 We normalize $u_{11}$ \eqref{u-11-12}  so that $u_{11}(1)=1$, and let
\begin{equation}\label{def-w-11}
w_{11}=\frac{du_{11}}{dx}\big|_{x=1}.
\end{equation}
We have 
\begin{equation}\label{w-11}
w_{11}\approx 0.6184539934\cdots
\end{equation}
For $j=1,2,$ we normalize $u_{2j}$ \eqref{u-ij} so that $u_{2j}(1)=1$, and let
\begin{equation}\label{def-w-21-22}
w_{2j}=\frac{du_{2j}}{dx}\big|_{x=1}.
\end{equation}
We have 
\begin{equation}\label{w-21-22}
w_{21}=0.7340312856\cdots, \quad w_{22}=-0.3321954186\cdots
\end{equation}
Additionally, we observe that
\begin{equation}\label{limit-u21-u22}
\lim_{x\rightarrow +\infty} \frac{u_{22}}{u_{21}}\doteq -\Lambda =-5.0153723738\cdots
\end{equation}

\begin{figure} 
\centering 
\subfigure[Positive solution $u$]{
 \label{fig:subfig:a} 
 \includegraphics[width=2.0in]{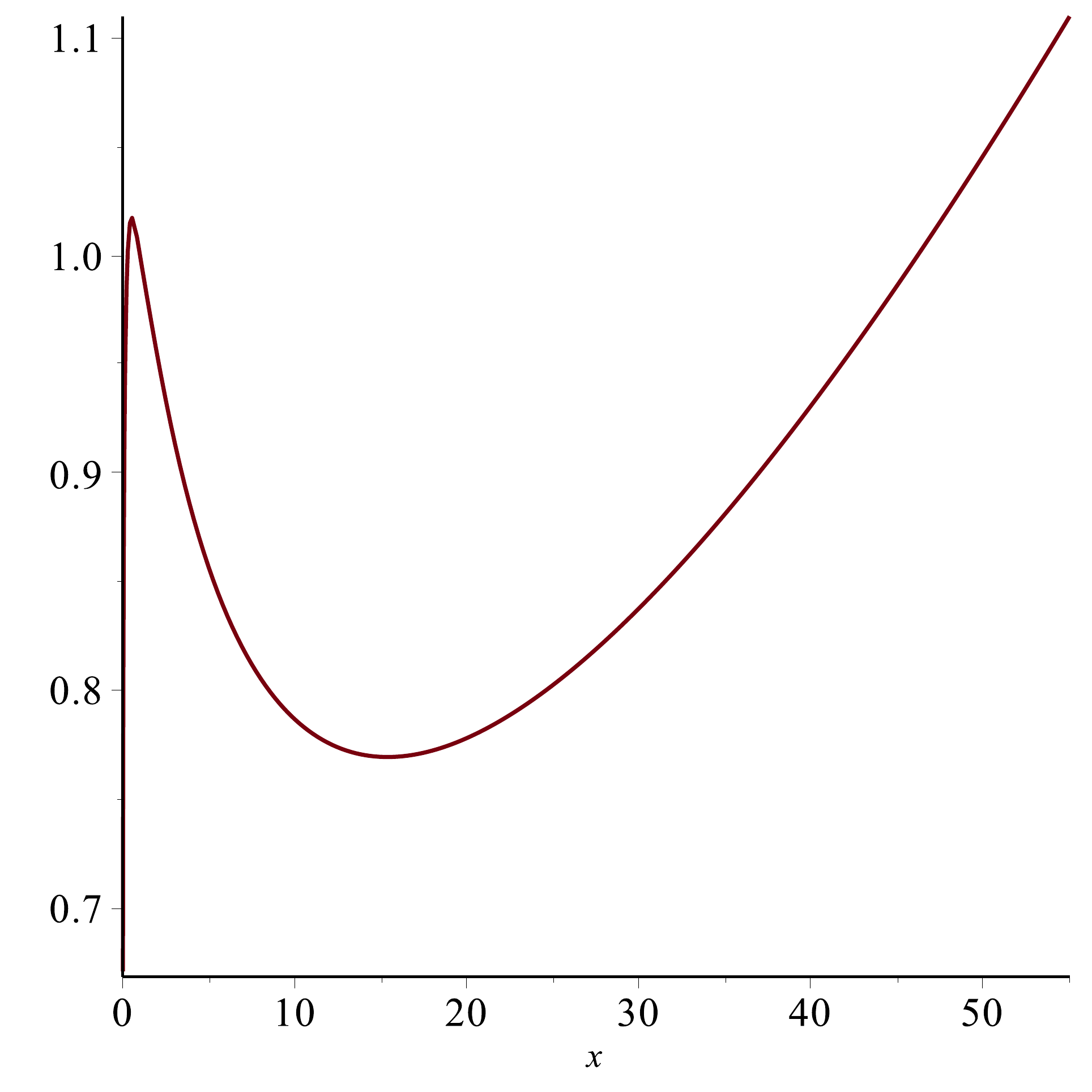}
 } 
  \hspace{0.1in} 
 \subfigure[$u$ near $x=1$]{ 
 \label{fig:subfig:b} 
  \includegraphics[width=2.0in]{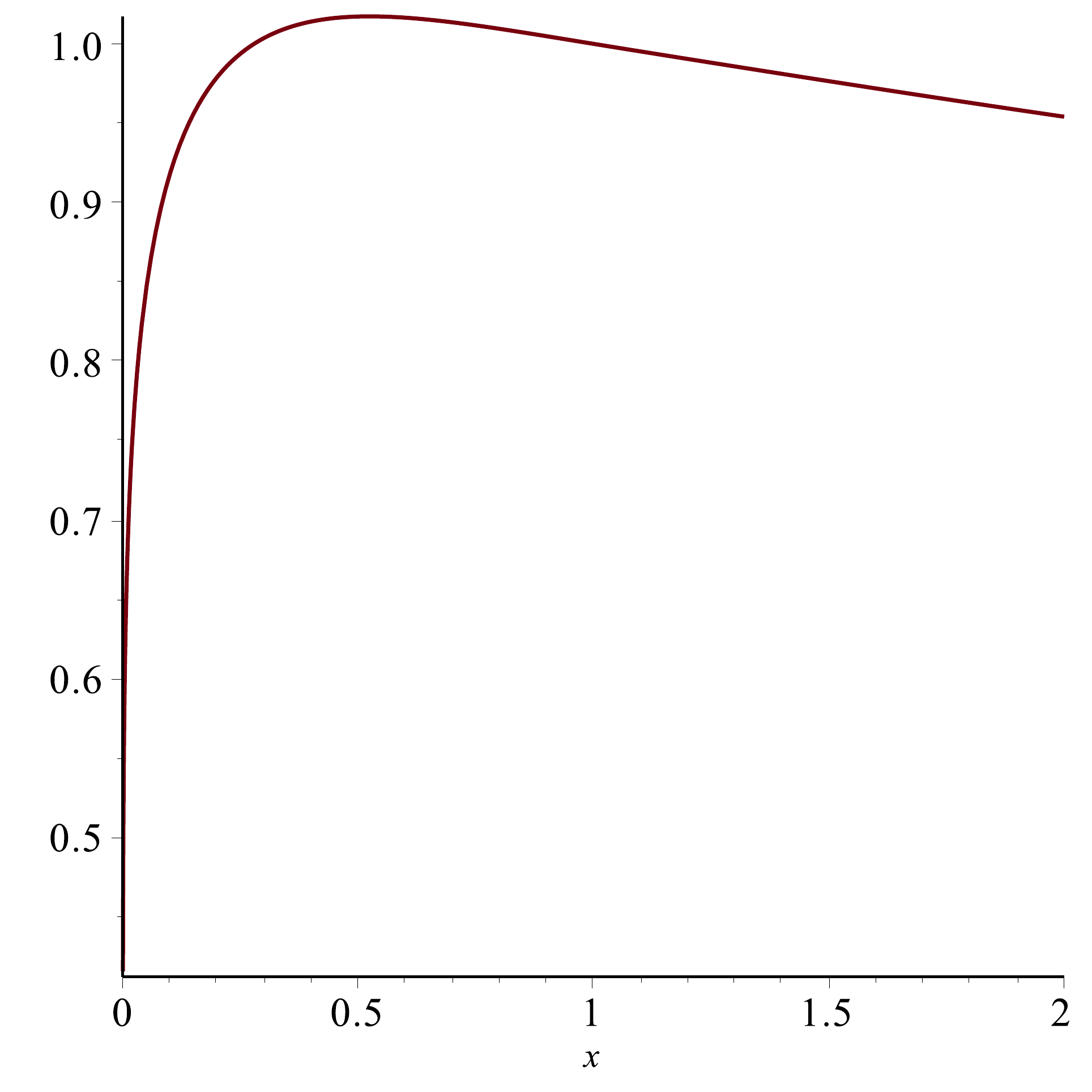}
  } 
  \caption{The positive $C^2$ solution $u$} \label{fig:subfig-u}
\end{figure}
\begin{theorem}[Positive hypergeometric function for Zerilli equations]\label{thm-Positive hypergeometric function for Zerilli equations}
We define $u$, normalized to $u(1) = 1$, by
\begin{equation}\label{u-joint}
 u =
\begin{cases}
u_{11}, \,\,\, &x \leq 1 \\
\omega u_{22}+(1-\omega)u_{21}, \,\,\, &x > 1
\end{cases}
\end{equation}
where $\omega$ is given by 
\begin{equation}\label{omega}
\omega=\frac{w_{11}-w_{21}}{w_{22}-w_{21}}=0.1083984220\cdots
\end{equation}
Then $u$ is indeed a positive $C^2$ solution (see Figure \ref{fig:subfig:a}, \ref{fig:subfig:b}) to the ordinary differential equation \eqref{ode-u}, \eqref{W-global}.
\end{theorem}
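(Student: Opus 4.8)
The plan is to establish two things about the piecewise function $u$ defined in \eqref{u-joint}: that it is a genuine $C^2$ solution of \eqref{ode-u}--\eqref{W-global} across the junction $x=1$, and that $u>0$ on $(0,\infty)$. On each open interval $(0,1)$ and $(1,\infty)$ the function $u$ is a linear combination of Frobenius solutions of a linear second-order ODE with smooth coefficients, so it is automatically smooth there and solves the equation; everything therefore reduces to the matching at $x=1$ and to the sign. For the $C^2$ matching I would first note that continuity of $u$ is built into the normalizations, since $u_{11}(1)=1$ by \eqref{u-11-12} and $\omega u_{22}(1)+(1-\omega)u_{21}(1)=\omega+(1-\omega)=1$ by \eqref{u-ij}. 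Continuity of $u'$ is exactly the condition that pins down $\omega$: by \eqref{def-w-11} and \eqref{def-w-21-22} the left derivative is $w_{11}$ and the right derivative is $w_{21}+\omega(w_{22}-w_{21})$, and demanding they agree gives $\omega=(w_{11}-w_{21})/(w_{22}-w_{21})$, which is precisely \eqref{omega}. Finally, continuity of $u''$ is not an extra assumption but a consequence of the equation $u''=Wu$ from \eqref{ode-u}: evaluating \eqref{W-global} at $x=1$ (with $M=1$) gives $W_1(1)=\tfrac16\cdot\frac{15-46+4}{9}=-\tfrac12=W_2(1)$, so $W$ is continuous at the junction, and hence $u''(1^-)=W_1(1)u(1)=W_2(1)u(1)=u''(1^+)$. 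Thus $u\in C^2$.

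For positivity, the interval $[0,1]$ is immediate: there $u=u_{11}>0$ by Lemma \ref{The hypergeometric differential equation associate to the Zerilli equations}. The real content is positivity on $(1,\infty)$, and my plan is to rewrite the right-hand branch so that it visibly combines positive functions with positive coefficients. Using the constant $\Lambda$ from \eqref{limit-u21-u22}, one writes
\begin{equation*}
\omega u_{22}+(1-\omega)u_{21}=\omega\,(u_{22}+\Lambda u_{21})+\big(1-\omega(1+\Lambda)\big)u_{21}.
\end{equation*}
Here $\omega\approx0.108$ and $1-\omega(1+\Lambda)\approx0.348$ are both positive (using $\Lambda\approx5.015$), and $u_{21}>0$ by Lemma \ref{The hypergeometric differential equation associate to the Zerilli equations}, so the positivity of $u$ reduces to the single claim $u_{22}+\Lambda u_{21}>0$ on $(1,\infty)$.

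To prove that claim I would identify $u_{22}+\Lambda u_{21}$ with a Frobenius solution adapted to $x=\infty$. Near infinity both $u_{21}$ and $u_{22}$ are combinations of the two $x=\infty$ Frobenius solutions of Lemma \ref{lemma-hypergeometric-tansformations}, whose leading behaviours are $x^{\alpha_2+\beta_2-a_2}$ (growing, since $a_2<0$ by \eqref{abc-2-numerical-values}) and $x^{\alpha_2+\beta_2-b_2}$ (decaying). The defining property $u_{22}/u_{21}\to-\Lambda$ in \eqref{limit-u21-u22} says precisely that the growing components cancel in $u_{22}+\Lambda u_{21}$, so this combination is a multiple of the decaying solution $x^{\alpha_2}(x+2)^{\beta_2}(-x/2)^{-b_2}F(b_2,b_2-c_2+1;b_2-a_2+1;-2/x)$. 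The hypergeometric factor has negative argument $-2/x$, and after using the symmetry $F(b_2,b_2-c_2+1;\cdot)=F(b_2-c_2+1,b_2;\cdot)$ together with $0<b_2<b_2-a_2+1$ (which hold by \eqref{abc-2-numerical-values}), the integral representation \eqref{eq:Hardy-Hypergeometric-Integral-Representation} shows it is strictly positive. Hence this solution is sign-definite on $(1,\infty)$, and since $u_{22}(1)+\Lambda u_{21}(1)=1+\Lambda>0$ the proportionality constant is positive, giving $u_{22}+\Lambda u_{21}>0$ and therefore $u>0$.

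The main obstacle is exactly this last sign claim, because it rests on the delicate asymptotic identification at infinity, on the exact value of $\Lambda$, and on interpreting the (constant) complex phase in $(-x/2)^{-b_2}$. A cleaner and fully rigorous alternative that I would use to bypass that phase is a Wronskian argument: on $(1,\infty)$, where $u_{21}>0$, set $\rho=u_{22}/u_{21}$, so that $\rho'=W[u_{21},u_{22}]/u_{21}^2$; since \eqref{ode-u} has no first-order term the Wronskian $W[u_{21},u_{22}]$ is constant, hence $\rho$ is monotone. As $\rho(1)=1$ and $\rho\to-\Lambda$ by \eqref{limit-u21-u22} with $-\Lambda<1$, the ratio decreases monotonically and remains in $(-\Lambda,1]$; in particular $\rho>-\Lambda$, i.e. $u_{22}+\Lambda u_{21}>0$, which is exactly the estimate needed to conclude $u>0$.
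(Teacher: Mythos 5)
Your proposal is correct, and its main line is essentially the paper's own argument: the same reduction of positivity on $(1,\infty)$ to the single combination $u_{22}+\Lambda u_{21}$ (your decomposition $\omega u_{22}+(1-\omega)u_{21}=\omega(u_{22}+\Lambda u_{21})+(1-\omega(1+\Lambda))u_{21}$ is just an additive rewriting of the paper's inequality $\tfrac{1-\omega}{\omega}>\Lambda$), the same identification of $u_{22}+\Lambda u_{21}$ with the decaying Frobenius solution $\bar u_2$ at $x=\infty$ via the vanishing of the growing coefficient (the paper's ``$0=p\cdot\infty+q\cdot 0$, hence $p=0$'' step), and the same positivity of $(-x/2)^{-b_2}F(b_2,b_2-c_2+1;b_2-a_2+1;-2/x)$ via the integral representation \eqref{eq:Hardy-Hypergeometric-Integral-Representation} with $0<b_2<b_2-a_2+1$. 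Where you genuinely diverge is in two places, both to your credit. First, your $C^2$ matching is more explicit than the paper's: you verify $W_1(1)=W_2(1)=-\tfrac12$ directly, whereas the paper argues more abstractly that $V_{joint}\in C^0$ forces $W\in C^0$ and then invokes $u''=Wu$; the content is the same, but your computation makes the junction condition checkable at a glance. Second, and more substantially, your Wronskian alternative is a genuinely different proof of the key estimate: since \eqref{ode-u} has no first-order term, $W[u_{21},u_{22}]$ is a nonzero constant (the solutions are independent because $c_2=1+\sqrt2\notin\Integers$), so $\rho=u_{22}/u_{21}$ is strictly monotone on $(1,\infty)$ where $u_{21}>0$; with $\rho(1)=1$ and $\rho\to-\Lambda<1$ from \eqref{limit-u21-u22}, $\rho$ is strictly decreasing and never attains its limit, giving $u_{22}+\Lambda u_{21}=u_{21}(\rho+\Lambda)>0$ directly. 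This bypasses the asymptotic expansion \eqref{asym-u2-infty}, the linear-algebra step \eqref{express-G}--\eqref{approx-G-1}, and the complex phase in $(-x/2)^{-b_2}$ entirely, at the cost of still taking the numerically observed limit $\Lambda$ in \eqref{limit-u21-u22} as input --- which is exactly the same numerical dependence the paper has, so nothing is lost. The elementary monotonicity argument is arguably the cleaner route to the sign claim, while the paper's Frobenius identification has the side benefit of exhibiting the precise decay rate $G\sim x^{(a_2-b_2+1)/2}$ of the distinguished solution.
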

\begin{remark}
In the proof of Theorem \ref{thm-Positive hypergeometric function for Zerilli equations}, we had used the numerical value of $w_{11}$ \eqref{w-11}, $w_{2j}, \, j=1,2$ \eqref{w-21-22} and $\Lambda$ \eqref{limit-u21-u22} to show the positivity of $u$.
\end{remark}

\begin{proof}[Proof for Theorem \ref{thm-Positive hypergeometric function for Zerilli equations}]
First, we note that, with the above choice \eqref{u-joint}, $u$ is actually $C^1$. Hence, $u$ defined in \eqref{u-joint} is now a $C^1$ solution to \eqref{ode-u}, \eqref{W-global}.

By construction, we have $u>0$ for $0<x\leq1$, since $u_{11}$ is positive (see Lemma \ref{The hypergeometric differential equation associate to the Zerilli equations}). One needs to check that for $x>1$, $u>0$. Notice that $u_{22}$ is positive (see Lemma \ref{The hypergeometric differential equation associate to the Zerilli equations}). We wish to prove that in $x>1$, $(1-\omega)u_{21}$ dominates $\omega u_{22}$, so that $\omega u_{22}+(1-\omega)u_{21} >0$. 

From now on, we will focus on the hypergeometric differential equation with $W=W_2$ in $x>1$,
\begin{equation}\label{hyper-o-d-e-2w2}
-u''(x)+ W_2u(x)=0.
\end{equation}
We set $z=-\frac{x}{2}$. Then \eqref{hyper-o-d-e-2w2} has a solution taking the following form 
\begin{equation}\label{hyper-o-d-e-2-solu}
\bar{u}=x^{\alpha_2}(x+2)^{\beta_2}F(z),
\end{equation}
where $F(z)$ is a solution to the hypergeometric differential equation 
\begin{equation}\label{hypergeometric-differential-equation-w2}
z(1-z)\frac{d^2w}{d^2z}+(c_2-(a_2+b_2+1)z)\frac{dw}{dz}-a_2b_2w=0
\end{equation}
with the parameters $a_2, b_2, c_2$ defined in \eqref{u-ij}. Note that, the hypergeometric function has possibly regular singularities at $ z= 0, 1,\infty$, namely, $ x = 0, -2, \infty$.
For $x>1,$ we only focus on the regular singularity $x=\infty$.
There are the following pairs $f_1(z), f_2(z)$ of Frobenius solutions to \eqref{hypergeometric-differential-equation-w2}, which is adapted to $z=\infty$ (see Lemma \ref{lemma-hypergeometric-tansformations}),
\begin{equation}\label{asym-infty-w2}
\begin{split}
f_1(z)&=z^{-a_2}F\left(a_2, a_2-c_2+1;a_2-b_2+1;\frac{1}{z}\right),\\
f_2(z)&=z^{-b_2}F\left(b_2, b_2-c_2+1;b_2-a_2+1;\frac{1}{z}\right).
\end{split}
\end{equation}
Submitting \eqref{asym-infty-w2} into \eqref{hyper-o-d-e-2-solu}, we know that there are a pair of Frobenius solutions $\bar{u}_{1}, \bar{u}_{2}$ to \eqref{hyper-o-d-e-2w2}, which are adapted to the singularity $x=\infty$,  
\begin{equation*}
\bar{u}_{1}=x^{\alpha_2}(x+2)^{\beta_2}f_1,\quad \bar{u}_{2}=x^{\alpha_2}(x+2)^{\beta_2}f_2.
\end{equation*}
In view of \eqref{alpha-beta-2}, we can calculate that the characteristic exponents of singularity $x=\infty$ are
\begin{equation*}
\frac{b_2-a_2+1}{2} \quad \text{and} \quad \frac{a_2-b_2+1}{2},
\end{equation*}
which could be further specified by the parameters $a_2, \, b_2$ in \eqref{u-ij}. As a result, we have the asymptotic expansion for $\bar{u}_{1}$ and $\bar{u}_{2}$
\begin{equation}\label{asym-u2-infty}
\bar{u}_{1} \sim x^{\frac{b_2-a_2+1}{2}} \rightarrow \infty, \quad \bar{u}_{2} \sim x^{\frac{a_2-b_2+1}{2}} \rightarrow 0, \quad \text{as} \,\,x \rightarrow \infty.
\end{equation}
As a remark, the parameters $a_2, b_2$ could be chosen in various ways, but the resulting characteristic exponents would be always the same.
Additionally, we could see that $(-1)^{b_2} \bar{u}_2$ is positive. For
\begin{equation}\label{bar-u-1-positive-1}
\begin{split}
(-1)^{b_2} \bar{u}_2 &= 2^{b_2} x^{\alpha_2-b_2}(x+2)^{\beta_2}F\left(b_2-c_2+1, b_2, ;b_2-a_2+1;-\frac{2}{x}\right),\\
&= 2^{b_2} x^{\alpha_2-b_2}(x+2)^{\beta_2}F\left(b_2-c_2+1, b_2, ;b_2-a_2+1;-\frac{2}{x}\right).
\end{split}
\end{equation}
where we had used the fact that the hypergeometric is symmetric in its first two arguments: $F(a,b;c;z)=F(b,a;c;z)$. And for $F\left(b_2-c_2+1, b_2, ;b_2-a_2+1;-\frac{2}{x}\right)$, the second and third two arguments satisfying $b_2-a_2+1> b_2>0,$ we can use the integral representation \eqref{eq:Hardy-Hypergeometric-Integral-Representation} to show that $$F\left(b_2-c_2+1, b_2, ;b_2-a_2+1;-\frac{2}{x}\right)>0 \quad \text{for} \quad x>0.$$

The general solution to \eqref{hyper-o-d-e-2w2}, which could be written as linear combinations of the Frobenius solutions $u_{21}, u_{22}$ (adapted to $x=0$), is either asymptotically decaying as $\bar{u}_{1}$ or as $\bar{u}_{2}$. Due to the observation \eqref{limit-u21-u22}, we will construct a combination of the Frobenius solutions $u_{21}, \, u_{22}$, which will be denoted by $G(x)$ below, such that $G(x)$ is asymptotically decaying as $\bar{u}_{2}$, and hence proportional to $\bar{u}_2$.  Additionally, $G(x)$ serves as a positive lower bound for $\omega u_{22}+(1-\omega)u_{21}$. In this way, we would prove the positivity for $\omega u_{22}+(1-\omega)u_{21}$.

Recalling \eqref{limit-u21-u22}, we have
\begin{equation}\label{Lambda}
\Lambda=5.0153723738\cdots
\end{equation}
We define a new function 
\begin{equation}\label{def-G}
G(x)=u_{22}+\Lambda u_{21}.
\end{equation}
We wish to prove that $G(x)>0$ for $x>0$.
Now $G(x)$ is a solution of the differential equation \eqref{hyper-o-d-e-2w2} with $G(1) = 1+\Lambda>0$. Moreover, by the construction, we know that 
\begin{equation}\label{def-G-infty-1}
 \lim_{x \rightarrow \infty}\frac{G(x)}{u_{21}(x)}= 0.
\end{equation}
With respect to the exponents of the two  Frobenius solutions \eqref{asym-u2-infty} adapted to singularity $x=\infty$, we have either
\begin{equation*}
G(x) \sim x^{\frac{b_2-a_2+1}{2}} \rightarrow \infty \quad \text{or} \quad G(x) \sim x^{\frac{a_2-b_2+1}{2}} \rightarrow 0 \quad \text{as} \,\, x \rightarrow \infty.
\end{equation*}
The fact of \eqref{def-G-infty-1} further shows that 
\begin{equation*}
G(x) \sim x^{\frac{a_2-b_2+1}{2}} \rightarrow 0 \quad \text{as} \quad x \rightarrow \infty.
\end{equation*}
 As a summary, $G(x)$ is a solution of the ordinary differential equation \eqref{hyper-o-d-e-2w2} with 
\begin{equation}\label{bdy-G}
G(1) = 1+\Lambda, \quad \lim_{x \rightarrow \infty}G(x)= 0.
\end{equation}
On the other hand, $G(x)$ could be expressed in terms of linear combinations of the  Frobenius solutions $\bar{u}_{1}$ and  $\bar{u}_{2}$: Suppose 
\begin{equation}\label{express-G}
G(x)=p \bar{u}_{1}(x)+ q\bar{u}_{2}(x),
\end{equation}
where $p,q$ are some constants. Taking value at $x=\infty$ yields that
\begin{equation}\label{express-G-infty}
\lim_{x \rightarrow \infty}G(x)=p \lim_{x \rightarrow \infty}\bar{u}_{1}(x)+ q \lim_{x \rightarrow \infty}\bar{u}_{2}(x),
\end{equation}
which gives $0=p\cdot \infty + q\cdot 0.$  Hence $p=0$ and
\begin{equation}\label{approx-G-1}
G(x)=q\bar{u}_{2}(x).
\end{equation}
We have known that $(-1)^{b_2} \bar{u}_2$ is positive. Hence \eqref{approx-G-1} implies that $G(x)$ does not change sign. Besides, we know that $G(1) = 1+\Lambda>0$, therefore $G(x)>0$ for $x>0.$

Next, we turn back to $\omega u_{22}+(1-\omega )u_{21}$, which could be written as 
\begin{equation}\label{Positive-Zerilli-u2}
\omega u_{22}+(1-\omega )u_{21} = \omega \left(u_{22}+\frac{1-\omega}{\omega}u_{21}\right). 
\end{equation}
Viewing the value of $\omega$ in \eqref{omega} and $\Lambda$ in \eqref{Lambda}, we know that
\begin{equation}\label{omega-Lambada}
\frac{1-\omega}{\omega} >\Lambda.
\end{equation}
Additionally, $u_{21}$ is positive (see Lemma \ref{The hypergeometric differential equation associate to the Zerilli equations}). Hence,
\begin{equation}\label{comp-Zerilli-u2-G}
\omega u_{22}+(1-\omega )u_{21} > \omega(u_{22}+\Lambda u_{21})=\omega G(x)>0.
\end{equation}
Therefore, we had proved that the $C^1$ solution $u$ defined by \eqref{u-joint} is a positive (see Figure \ref{fig:subfig:a}) solution  to \eqref{ode-u}.  Actually, \eqref{ode-u} tells that $u$ is also $C^2$ (see Figure \ref{fig:subfig:b}), since the transformation \eqref{u-v-tranf} takes  $V_{joint}$ to $W$, and $V_{joint} \in C^0$, hence $W$ is continuous too.

Finally, $\phi$ defined as \eqref{u-v-tranf} would be then a positive $C^2$ solutions to \eqref{ode-v}.
\end{proof}

Now we are ready to prove the integrated decay estimate for the Zerilli case.

\begin{proof}[Proof of Theorem \ref{Integrated decay estimate} for Zerilli equations]

We have constructed a $C^2$ function $\phi$ \eqref{u-v-tranf}, which is a positive solution to \eqref{ode-v} with $V=V_{joint}$.
Thus the Hardy inequality follows.
Hence we complete the proof for the Zerilli case.
\end{proof}

\begin{remark}\label{rk-sigma-t-l-i-d-e}
We can apply the proof of Theorem \ref{Integrated decay estimate} to the spacetime foliation $\bigcup_{\tau} \Sigma_\tau$, hence for $\tau_0 < \tau$ 
\begin{equation}\label{Morawetz-2-nontrap-sigma-t}
\begin{split}
&\int_{\tau_0}^{\tau} \di t\int_{\Sigma^\prime_t} \left( (1-\mu) |\p_r\solu|^2+ |\solu|^2+ |\p_t\solu|^2 +|\nablaslash\solu|^2 \right) \di \mu_g  \\
\lesssim &\int_{\Sigma_{\tau_0}} \sum_{i\leq 1} P^T_{\alpha}(T^i\solu)n_{\Sigma_{\tau_0}}^{\alpha},
\end{split}
\end{equation}
where $\Sigma^\prime_t= \Sigma_t \cap \{r \leq R\}$.
\end{remark}

\subsection{Non-degenerate energy}\label{non-deg-energy}

With the integrated decay estimate, we could use the vector field $N$ to prove the uniform boundedness for the non-degenerated energy.

\begin{proof}[Proof of Theorem \ref{uniform-bound}]
For solution $\solu$ of the equation \eqref{eq-V}, taking the vector field $\xi =T,$ the associated energy on $t=\tau$ slice is 
\begin{equation*}
E^T(\solu,\tau)=\frac{1}{2}
\int_{t=\tau} \left( |\p_t\solu|^2  +|\p_{r^\ast}\solu|^2+ (1-\mu)( |\nablaslash\solu|^2+V_g\solu^2) \right) r^2\di r^\ast \di \sigma_{S^2}.
\end{equation*}
In presence of the factor $(1-\mu)$, the energy $E^T(\solu, \tau)$ is degenerate on the horizon $\hori$. We shall use the globally time-like vector field $N$ and consider the associated energy. 

Away from the horizon $\Sigma^e_\tau = \{\M |t=\tau \} \cap  \{  r> r_{NH} \}$.
Note that $N=T$ for $r>r_{NH}$. For solutions of Regge-Wheeler or Zerilli equations, they both have angular frequency $\ell\geq2$, which implies that \eqref{angular-int} holds.
This indeed gives the positivity of energy on $\Sigma^e_\tau$:  In the case of Regge-Wheeler equation,
\begin{equation*}
E^N(\solu,  \Sigma^e_\tau)\geq\frac{1}{2}
\int_{\Sigma^e_\tau} \left( |\p_t\solu|^2 + |\p_{r^\ast}\solu|^2 +(1-\mu) \left( \frac{|\nablaslash\solu|^2}{4}+\frac{ \solu^2}{r^2} \right) \right)  r^2\di r^\ast \di \sigma_{S^2}>0.
\end{equation*}
In the case of Zerill equation, 
\begin{equation*}
\begin{split}
\int_{\Sigma^e_\tau} |\nablaslash\solu|^2+V^Z_g \solu^2 {}&{}=\int_{\Sigma^e_\tau}  \left(2\lambda +2 - \frac{ \mu (2\lambda+3) ( 2\lambda +\frac{3}{2}\mu )}{ (\lambda +\frac{3}{2}\mu)^2 } \right) \frac{\solu^2}{r^2} \\
{}&{} \geq \int_{\Sigma^e_\tau}  \left( 2\lambda -2 + \frac{6}{ 2\lambda +3 } \right) \frac{\solu^2}{r^2} >0.
\end{split}
\end{equation*}
Note that, $(1-\mu)= 1-\frac{2M}{r} \approx 1$ for $r>r_{NH}$.

Near the horizon, recalling that $ \Sigma^i_\tau\doteq \{\M |v=\tau +r_{NH}^\ast \} \cap  \{  r \leq r_{NH} \}$, the energy
\begin{equation}\label{nondeg-energy-horizon}
E^N(\solu,\Sigma^i_\tau)
\approx \int_{\Sigma^i_\tau}\left( \frac{|\p_u\solu|^2}{1-\mu} +(1-\mu)(|\nablaslash\solu|^2+V_g|\solu|^2)\right)\di u\di\sigma_{S^2}.
\end{equation}
The non-degeneracy of $E^N(\solu,\Sigma^i_\tau)$ is more apparent if we write this integral in $(v,r,\theta,\phi)$ coordinate, which up to some constant is $\int_{\Sigma^i_\tau}( |\p_r\solu|^2 +|\nablaslash\solu|^2+V_g\solu^2)\di r\di\sigma_{S^2}.$
Due to the fact that $\ell\geq2$, we have
\begin{equation}\label{non-deg-energy-horizon-estimate}
\int_{\Sigma^i_t} \left( |\p_r\solu|^2 +|\nablaslash\solu|^2+\frac{|\solu|^2}{r^2} \right)\di r\di\sigma_{S^2}\lesssim E^N(\solu,\Sigma^i_\tau).
\end{equation}
Taking $\xi =N$, we apply the energy identity with the momentum vector $P^N(\solu)$. Noting that $N=T$ is killing for $r>r_{NH}$, we have
\begin{equation}\label{energy-id-N}
\begin{split}
&\int_{\Sigma_\tau} P_\alpha^N(\solu)n^\alpha + \int_{\hori}P_\alpha^N(\solu)n^\alpha + \doubleint_{ \{ r<r_0 \} }K^N(\solu) \\
=  &  \int_{\Sigma_{\tau_0}} P_\alpha^N(\solu)n^\alpha - \doubleint_{\{r_0<r<r_{NH} \}}  K^N(\solu).
\end{split}
\end{equation}
For $r<r_0$,
\begin{equation*}
 K^N(\solu)\geq b P^N_{\alpha}(\solu)n_{\Sigma_\tau}^{\alpha},
\end{equation*}
for some constants $b>0$. This is the red-shift effect, which allows us to control the non-degenerate energy on the horizon. 

We should combine  the conservation of the degenerate energy associated to the multiplier $\p_t$ with the integrated decay estimate Theorem \ref{Integrated decay estimate} to derive the uniform boundedness of non-degenerate energy.

Given data on $\Sigma_t$ for $\psi$, one may impose data along the horizon to the past of $\Sigma_t$. This data can be chosen so as to be invariant under the flow along $T$, i.e. to depend only on the angular variables and to match $\psi$ where $\Sigma_t$ meets the horizon. Let $\tilde{\psi}$ denote the solution generated by evolving this data both forward and backward in time. Because $n$ is proportional to $T$ along the horizon, $P^T_\alpha (\psi) n^\alpha$ is proportional to $P^T_\alpha(\psi)T^\alpha=(1/2) (T\psi)^2$, one finds  $P^T_\alpha (\psi) n^\alpha=0$ along the horizon, and in particular the flux through any portion of the horizon is $0$. Thus, $\tilde{\psi}$ is equal to $\psi$ in the future of $\Sigma_t$ and satisfies 
\begin{equation}\label{conservation-degenerate-energy-sigma-t-t}
E^T(\psi,\Sigma_\tau)=E^T(\tilde{\psi},\tau).
\end{equation}
Thus, the last integral of \eqref{energy-id-N}
\begin{equation}\label{integrated-energy-sigma-t-t}
\begin{split}
\doubleint_{\{r_0<r<r_{NH}\}}K^N(\psi) &\lesssim \int_{t=\tau_0} \sum_{i\leq 1}P^T_\alpha(T^i \tilde{\psi})n^\alpha \\
& \lesssim \int_{\Sigma_{\tau_0}} \sum_{i\leq 1} P^T_\alpha(T^i\psi)n^\alpha,
\end{split}
\end{equation}
where the first inequality is due to the integrated decay estimate in Theorem \ref{Integrated decay estimate} and the second inequality follows from \eqref{conservation-degenerate-energy-sigma-t-t}.
\end{proof}

Theorem \ref{Integrated decay estimate} only gives the degenerate Morawetz estimate, since $\Delta$ vanishes at the horizon. In the proof of Theorem \ref{Integrated decay estimate}, we see that combining the energy inequality \eqref{energy-id-N} with the the uniform energy boundedness (Theorem \ref{uniform-bound}) and the local integrated decay estimate (Theorem \ref{Integrated decay estimate}), we get the non-degenerate local integrated decay estimate \cite{S-13} (corollary 4.3), \cite{A-Blue-wavekerr} (Appendix A).
\begin{corollary}[Nondegenerate Integrated Decay Estimate]\label{nondeg-Integrated decay estimate}
Let $\solu$ be a solution of Regge-Wheeler equation \eqref{RW} or Zerilli equations \eqref{Zerilli}, we have for all $R>3M$
\begin{equation}\label{eq-nondeg-integrate-decay-estimate}
\int_{\tau_0}^{\tau} \di t \int_{\Sigma^\prime_t} P^N_{\alpha}(\solu)n_{\Sigma_\tau}^{\alpha}  \lesssim  \int_{\Sigma_{\tau_0}} P^N_{\alpha}(\solu)n_{\Sigma_{\tau_0}}^{\alpha} + P^T_{\alpha}(T\solu)n_{\Sigma_{\tau_0}}^{\alpha},
\end{equation}
where $\Sigma_\tau^\prime=\Sigma_\tau \cap \{r<R\}$.
\end{corollary}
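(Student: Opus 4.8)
The plan is to upgrade the degenerate estimate of Theorem \ref{Integrated decay estimate} to a non-degenerate one by removing its two sources of degeneracy separately: the horizon, where the coefficient $\Delta^2/r^6$ of $|\p_r\solu|^2$ vanishes, and the photon sphere $r=3M$, where the sharp cut-off $\localiseAwayFromPhotonOrbits$ kills the $\p_t$ and angular terms. First I would split the region $\{r<R\}$ into an away-from-horizon piece $\{r_{NH}\leq r<R\}$ and a near-horizon piece $\{r<r_{NH}\}$, and treat each with a different mechanism.

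On $\{r_{NH}\leq r<R\}$ the vector field $N$ coincides with $T$, so $P^N_\alpha(\solu)n^\alpha=P^T_\alpha(\solu)n^\alpha$, and since $1-\mu$ is bounded below there the density is already non-degenerate. The only obstruction is the photon sphere, which is removed by invoking the $T$-commuted Morawetz estimate \eqref{Morawetz-2-nontrap} (equivalently its $\Sigma_\tau$-foliation version \eqref{Morawetz-2-nontrap-sigma-t} from Remark \ref{rk-sigma-t-l-i-d-e}). This bounds $\int_{\tau_0}^{\tau}\di t\int_{\{r_{NH}\leq r<R\}}P^T_\alpha(\solu)n^\alpha$ by $E^T(\solu,\tau_0)+E^T(T\solu,\tau_0)$, hence by the right-hand side of \eqref{eq-nondeg-integrate-decay-estimate}, and it is here that the $P^T_\alpha(T\solu)n^\alpha$ term enters.

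The near-horizon piece is where the red-shift is used, and this is the main step. Here I would apply the energy identity \eqref{energy-id-N} for the current $K^N$. In the immediate neighborhood $\{r<r_0\}$ the red-shift gives $K^N(\solu)\geq b\,P^N_\alpha(\solu)n^\alpha$, so $\doubleint_{\{r<r_0\}}P^N_\alpha(\solu)n^\alpha$ is bounded by $b^{-1}\doubleint_{\{r<r_0\}}K^N$, which by \eqref{energy-id-N} is in turn controlled by the initial energy $\int_{\Sigma_{\tau_0}}P^N_\alpha(\solu)n^\alpha$, by the boundary flux $\int_{\Sigma_\tau}P^N_\alpha(\solu)n^\alpha$ and the horizon flux $\int_{\hori}P^N_\alpha(\solu)n^\alpha$ (both non-negative, hence discardable), and by the error term $\doubleint_{\{r_0<r<r_{NH}\}}K^N$. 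This last bulk term sits in a region where $\Delta$ is bounded below, so there $K^N$ is a quadratic form in the first derivatives of $\solu$ with bounded coefficients and is dominated by the density of the degenerate Morawetz estimate, which is non-degenerate in this $r$-range and is absorbed via \eqref{integrated-energy-sigma-t-t}. The intermediate annulus $\{r_0\leq r\leq r_{NH}\}$ is handled the same way, directly by the degenerate estimate, since it too is bounded away from the horizon.

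Combining the three contributions, and using the uniform boundedness of the non-degenerate energy (Theorem \ref{uniform-bound}) to control every boundary energy by $E^N(\solu,\Sigma_{\tau_0})$, yields \eqref{eq-nondeg-integrate-decay-estimate}. The hard part will be the bookkeeping in the near-horizon region: the positivity $K^N\geq b\,P^N_\alpha n^\alpha$ holds only on $\{r<r_0\}$, so $r_0$ must be chosen close enough to $2M$ for the red-shift to dominate, yet strictly $r_0>2M$, so that the leftover bulk term on $\{r_0<r<r_{NH}\}$ remains in the region where $\Delta$ is bounded below and can be swallowed by the degenerate Morawetz estimate rather than reintroducing a horizon-degenerate loss.
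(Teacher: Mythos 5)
Your proposal is correct and takes essentially the same route the paper does: the paper proves this corollary exactly by combining the red-shift energy identity \eqref{energy-id-N}, the $T$-commuted degenerate Morawetz estimate of Theorem \ref{Integrated decay estimate} (in the foliated form \eqref{Morawetz-2-nontrap-sigma-t}), and the uniform boundedness of Theorem \ref{uniform-bound}, citing \cite{S-13} (Corollary 4.3) and \cite{A-Blue-wavekerr} (Appendix A) rather than writing out the details. Your region splitting at $r_{NH}$, the discarding of the non-negative fluxes, and the absorption of the $\{r_0<r<r_{NH}\}$ bulk error via \eqref{integrated-energy-sigma-t-t} is precisely the mechanism the paper's sketch relies on.
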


To proceed to higher order case,
we use the non-degenerate radial vector field 
 \begin{equation}\label{hat-Y}
\hat{Y}=\begin{cases}
(1-\mu)^{-1}\partial_u, &\text{if } r \leq r_{NH}, \\
\dr, &\text{if } r > r_{NH}.
\end{cases}
\end{equation}
Notice that, near horizon we can also write $\hat Y$ in $(v, r, \omega)$ coordinate as $\hat Y = \dr,\,\, \text{if } r \leq r_{NH}.$

\begin{corollary}[Nondegenerate High Order Integrated Decay Estimate]\label{coro-nondeg-Integrated decay estimate-high-order}
Let $\solu$ be a solution of Regge-Wheeler equation \eqref{RW} or Zerilli equations \eqref{Zerilli}, we have for all $R >3M$ and all integers $n \in \Naturals$,  
\begin{equation}\label{nondeg-Integrated decay estimate-high-order}
\begin{split}
&\int_{\tau_0}^{\tau} \di t \int_{\Sigma_\tau^\prime} \sum_{k+l+j\leq n} |N^{k}\Omega^l \hat{Y}^{j}\solu|^2\mathrm{d}\mu_g  \\
\lesssim &\int_{\Sigma_{\tau_0}} \sum_{k+l+j\leq n} P^N_{\alpha}(T^k\Omega^l \hat{Y}^j\solu)n_{\Sigma_{\tau_0}}^{\alpha} +  \sum\limits_{i\leq n+1} P^T_\alpha (T^i\solu)n_{\Sigma_{\tau_0}}^{\alpha},
\end{split}
\end{equation}
where $\Sigma_\tau^\prime=\Sigma_\tau\cap \{r<R\}$.
\end{corollary}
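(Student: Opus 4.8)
The plan is to argue by induction on $n$, taking Corollary~\ref{nondeg-Integrated decay estimate} as the base case ($n\le 1$) and promoting the estimate one derivative at a time. The essential observation is that the three families of commutators behave very differently: $T=\dt$ and the rotation fields $\Omega_i$ are Killing and commute cleanly with the operator $\Box_g-V_g$, whereas the transverse field $\hat Y$ does not, and must be treated through the red-shift mechanism near $\hori$ and through elliptic use of the equation away from it. I would therefore split the left-hand sum according to the number $j$ of $\hat Y$ factors and control the blocks $j=0$ and $j\ge 1$ separately.

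First, the Killing directions. Since $T$ and each $\Omega_i$ are Killing for $\gMetric$ and the potential $V_g$ depends only on $r$, one has $[\Box_g-V_g,\,T]=0$ and $[\Box_g-V_g,\,\Omega_i]=0$; moreover each $\Omega_i$ preserves the fixed $\ell$-eigenspace, hence the Zerilli parameter $\bar\lambda$. Consequently, for every $k,l$ the function $T^k\Omega^l\solu$ again solves the Regge-Wheeler \eqref{RW} or Zerilli \eqref{Zerilli} equation. Applying Corollary~\ref{nondeg-Integrated decay estimate} to each of these solutions controls the $j=0$ block $\sum_{k+l\le n}\int_{\tau_0}^\tau\!\int_{\Sigma^\prime_t}P^N_\alpha(T^k\Omega^l\solu)n^\alpha$ by the corresponding data on $\Sigma_{\tau_0}$ together with one extra $T$-derivative, which is the source of the term $\sum_{i\le n+1}P^T_\alpha(T^i\solu)n^\alpha$ on the right. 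Because $N=T$ for $r\ge r_{NH}$, and in $(v,r,\theta,\phi)$ coordinates $T=\pv$ while $\hat Y=\dr$, the pointwise integrand $|N^k\Omega^l\hat Y^j\solu|^2$ on the left is a combination, with smooth coefficients, of $P^N$-type densities of functions $T^{k'}\Omega^l\hat Y^{j'}\solu$, so bounding all the commuted densities closes off the left-hand side.

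Second, the transverse derivatives. For $r>r_{NH}$ one has $\hat Y=\dr$ and $N=T$, and here I would recover radial derivatives algebraically: the equation $\Box_g\solu=V_g\solu$ lets one solve for $\dr^2\solu$ in terms of $\dt^2\solu$, $\laplacianslash\solu$, $\dr\solu$ and $\solu$, so that every $\hat Y^j$ with $j\ge2$ is traded, via repeated differentiation of the equation, for at most one radial derivative plus tangential ($T,\Omega$) derivatives and lower-order terms, all already controlled by the inductive hypothesis and the $j=0$ block. The genuinely new analytic input lives near $\hori$, where $\hat Y=(1-\mu)^{-1}\pu=\dr$. There I would commute the equation $j$ times with $\hat Y$ and run the $N$-energy identity \eqref{energy-id-N} on the commuted quantity $\Phi=T^k\Omega^l\hat Y^j\solu$, exactly as in the proof of Theorem~\ref{uniform-bound}: the commutator $[\Box_g,\hat Y^j]$ produces a top-order term proportional to $j\kappa\,\hat Y^{j+1}\solu$ with the favorable sign dictated by the positive surface gravity $\kappa$, so that $K^N(\Phi)\ge b\,P^N_\alpha(\Phi)n^\alpha$ with $b>0$ persists for $r<r_0$, while the remaining commutator and potential contributions $\hat Y^m(V_g\solu)$ are strictly lower order in $\hat Y$ and are absorbed either by the red-shift bulk or by the previous induction step.

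The main obstacle is the red-shift commutation near the horizon: one must verify that, after $j$ transverse commutations, the highest-order term in $[\Box_g,\hat Y^j]\solu$ retains the good sign uniformly in $j$, so that it is dominated by the positive red-shift bulk for $r<r_0$ rather than competing with the degenerate interior estimate. This is precisely the Dafermos-Rodnianski red-shift argument; combined with the inductive hypothesis to close the lower-order errors and the elliptic trick for $r>r_{NH}$, it yields \eqref{nondeg-Integrated decay estimate-high-order}. A secondary bookkeeping point is the smooth junction at $r=r_{NH}$ between the two definitions of $\hat Y$ and of $N$, where the localized commutator terms are again lower order and are controlled on $\{r\le R\}$ by Corollary~\ref{nondeg-Integrated decay estimate}.
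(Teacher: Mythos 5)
Your proposal is correct and follows essentially the same route as the paper: Corollary \ref{nondeg-Integrated decay estimate} plus $T$-commutation as the base, the Dafermos--Rodnianski red-shift commutation with $\hat{Y}$ and the $N$-energy identity \eqref{energy-id-N} near the horizon (with the good-sign term $\kappa Y^2\solu$ absorbing the errors via Cauchy--Schwarz), and recovery of radial and angular derivatives away from the horizon before closing by induction. Your algebraic substitution of $\dr^2\solu$ from the equation and your explicit $\Omega$-commutation are just the concrete implementation of what the paper calls the ``elliptic estimate,'' so the two arguments coincide in substance.
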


\begin{proof}
First, commuting the equation with T, we still have non-degenerate integrated decay estimate for $T\solu$
\begin{equation}\label{eq-nondeg-integrate-decay-estimate-T}
 \int_{\tau_0}^{\tau} \di t \int_{\Sigma_\tau^\prime} \sum_{k=0}^{1} P^N_{\alpha}(T^k\solu)n_{\Sigma_\tau}^{\alpha}  \lesssim  \sum_{k=0}^{1} \int_{\Sigma_{\tau_0}}P^N_{\alpha}(T^k\solu)n_{\Sigma_{\tau_0}}^{\alpha} +   P^T_\alpha (T^{k+1}\solu)n_{\Sigma_{\tau_0}}^{\alpha}.
\end{equation}
The elliptic estimate yields the high order integrated decay estimate away from the horizon. 

Near horizon, say $r<r_0<r_{NH}$, we shall commute the wave operator with $Y= (1-\mu)^{-1}\partial_u$ \cite{D-R-13}. This commutator has a good sign,
\begin{equation*}
\Box_gY\varphi -Y\Box_g\varphi= \kappa Y^2\varphi + f(YT\varphi, T\varphi,Y\varphi),
\end{equation*}
where $f(YT\varphi, T\varphi,Y\varphi)$ is linearly dependent on $YT\varphi, T\varphi,Y\varphi,$  and  $\kappa >0$  is another manifestation of the red-shift effect. After commuting the equation with $Y$ and $T$, we use the energy identity for $N$ to estimate
\begin{equation}\label{i-order-ile-Y}
\begin{split}
&\int_{\Sigma_\tau} P_\alpha^N(Y\solu)n^\alpha + \int_{\hori}P_\alpha^N(Y\solu)n^\alpha + \doubleint_{ \{ r<r_0 \} }K^N(Y\solu) \\
=  &  \int_{\Sigma_{\tau_0}} P_\alpha^N(Y\solu)n^\alpha + \doubleint_{\{r<r_0  \}} \mathcal{E}^N(Y\solu) +\doubleint_{ \{r_0\leq r\leq r_{NH} \} } \mathcal{E}^N(Y\solu)- K^N(Y\solu),
\end{split}
\end{equation}
where  
\begin{equation*}
\begin{split}
 \mathcal{E}^N(Y\solu) &= -NY\solu \left(\kappa Y^2\solu + f(YT\solu , T\solu, Y\solu) -\dr V_g \solu \right) \\
=  &  -\kappa (Y^2\solu)^2-\kappa (N-Y)Y\solu Y^2\solu +\left( f(YT\solu , T\solu, Y\solu)-\dr V_g \solu \right) NY\solu,
\end{split}
\end{equation*}
where $V_g$ takes the valued of Regge-Wheeler $V_{RW}$ or Zerilli potential $V_Z$.
The first term on the right hand side has a good sign. Applying Cauchy-Schwarz inequality and using the fact that $N-Y=T$ on $\hori$, the other terms can be bounded in $r\leq r_0$ by $c K^N(Y\solu) + c^{-1}(YT\solu)^2 +c^{-1}K^N(\solu).$
In view of \eqref{eq-nondeg-integrate-decay-estimate} and \eqref{eq-nondeg-integrate-decay-estimate-T},
\begin{equation*}
\doubleint_{\{r<r_0\}} \mathcal{E}^N(Y\solu) \leq  c\doubleint_{ \{ r<r_0 \} }K^N(Y\solu) +I.
\end{equation*}
Here $I$ depends only the initial data. The terms 
$$\doubleint_{ \{r_0 \leq r \leq r_{NH} \} } \mathcal{E}^N(Y\solu)- K^N(Y\solu)$$ can be estimated by the integrated decay estimate \eqref{eq-nondeg-integrate-decay-estimate-T}.
Noting that 
$P^N_{\alpha}(\solu)n_{\Sigma_\tau}^{\alpha} \leq b K^N(\solu)$ for $r<r_0<r_{NH}$ and some constant $b>0$,  \eqref{i-order-ile-Y} gives the estimate
\begin{equation*}
 \doubleint_{ \{ r<r_0 \} } P^N_{\alpha}(N\solu)n_{\Sigma_\tau}^{\alpha}
\lesssim  \int_{\Sigma_{\tau_0}} \sum\limits_{k, j\leq 1} P^N_{\alpha}(T^kY^j\solu)n^{\alpha} +  \sum\limits_{k\leq 2} P^T_{\alpha}(T^k\solu)n^{\alpha}.
\end{equation*}

Finally, commuting repeatedly with $Y$ and $T$, the above scheme plus elliptic estimate yield the desired estimate.
\end{proof}

The scheme in the proof of Corollary \ref{coro-nondeg-Integrated decay estimate-high-order} plus the elliptic estimates yield high order uniform boundedness
\begin{corollary}[High Order Uniform Boundedness]\label{uniform-bound-high}
Let $\solu$ be a solution of the Regge-Wheeler \eqref{RW} or Zerilli equations \eqref{Zerilli}.
Then for all $n\in\Naturals$, $\tau>\tau_0$,
\begin{equation}\label{uniform-energy-high}
\begin{split}
 &\int_{\Sigma_{\tau}}  \sum_{k+l+j\leq n} P^N_{\alpha}(N^k\Omega^l \hat{Y}^j\solu)n_{\Sigma_{\tau}}^{\alpha}\\
 \lesssim  &\int_{\Sigma_{\tau_0}}  \sum_{k+l+j\leq n} P^N_\alpha (T^i\Omega^l \hat{Y}^j\solu)n_{\Sigma_{\tau_0}}^{\alpha}+  \sum\limits_{i\leq n+1} P^T_\alpha (T^i\solu)n_{\Sigma_{\tau_0}}^{\alpha}.
 \end{split}
\end{equation}

\end{corollary}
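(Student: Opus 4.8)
The plan is to run the same inductive scheme as in the proof of Corollary \ref{coro-nondeg-Integrated decay estimate-high-order}, but with the spacetime integral on the left replaced by the energy flux through the single slice $\Sigma_\tau$; the commutator errors generated along the way are then absorbed into the high order integrated decay estimate \eqref{nondeg-Integrated decay estimate-high-order}, which has already been established.

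First I would dispose of the Killing directions. Since $T=\partial_t$ is Killing and $V_g$ is independent of $t$, and since each rotation $\Omega_i$ is Killing with $V_g$ a function of $r$ alone, the commuted fields $T^k\Omega^l\solu$ again solve \eqref{eq-V}. Hence Theorem \ref{uniform-bound} applies to each of them without modification, giving
$$E^N(T^k\Omega^l\solu,\Sigma_\tau)\lesssim E^N(T^k\Omega^l\solu,\Sigma_{\tau_0}),$$
which already controls every term of \eqref{uniform-energy-high} carrying no factor of $\hat{Y}$.

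The substance lies in the radial derivatives $\hat{Y}^j$. Away from the horizon $\hat{Y}=\partial_r$ is harmless: using $\Box_g\solu=V_g\solu$ to solve for $\partial_r^2\solu$ in terms of $\partial_t^2\solu$, the angular Laplacian, and lower-order terms, the elliptic estimate trades each radial derivative for time and angular derivatives that are already under control. Near the horizon I would instead commute with $Y=(1-\mu)^{-1}\partial_u$ and use the red-shift identity
$$\Box_g Y\varphi-Y\Box_g\varphi=\kappa Y^2\varphi+f(YT\varphi,T\varphi,Y\varphi),\qquad \kappa>0,$$
exactly as in \eqref{i-order-ile-Y}. Writing the $N$-energy identity for $\varphi=Y^jT^k\Omega^l\solu$, the leading commutator contribution $-\kappa(Y\varphi)^2$ carries a favourable sign and is kept on the left alongside the nonnegative horizon flux $\int_{\hori}P^N_\alpha(\varphi)n^\alpha\ge 0$; by Cauchy-Schwarz the indefinite remainder of $\mathcal{E}^N(\varphi)$ absorbs a small multiple of $K^N(\varphi)$ and leaves only lower-order bulk terms, namely $(YT\varphi)^2$ and $K^N$ evaluated on fields of strictly lower $\hat{Y}$-order.

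Those residual bulk quantities are exactly what \eqref{nondeg-Integrated decay estimate-high-order} bounds in terms of the initial data, so after the absorption step $\int_{\Sigma_\tau}P^N_\alpha(\varphi)n^\alpha$ is dominated by the right-hand side of \eqref{uniform-energy-high}. Inducting on the total order $k+l+j$, commuting alternately with $T$, $\Omega$ and $Y$ and applying the elliptic estimate on $r_{NH}\le r\le R$ at each stage—and noting that $N-T$ is a first-order operator supported in $r\le r_{NH}$, so that the discrepancy between commuting with $N$ and with $T$ is itself controlled by the same red-shift bulk at one lower order—closes the argument. The main obstacle is this near-horizon absorption: one must verify that the strength of $\kappa>0$ in the red-shift commutator dominates the cross terms after Cauchy-Schwarz at every order, and that the leftover bulk falls under \eqref{nondeg-Integrated decay estimate-high-order} at strictly lower order, so that the induction closes with no loss of derivatives.
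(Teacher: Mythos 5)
Your proposal is correct and follows essentially the same route as the paper, whose proof of this corollary consists precisely of invoking the scheme from Corollary \ref{coro-nondeg-Integrated decay estimate-high-order} (red-shift commutation with $Y=(1-\mu)^{-1}\partial_u$ near the horizon, Cauchy--Schwarz absorption into $K^N$, and control of the residual bulk by the already-established high order integrated decay estimate \eqref{nondeg-Integrated decay estimate-high-order}) together with elliptic estimates away from the horizon. The details you supply beyond the paper's one-line sketch---handling the Killing commutations $T,\Omega$ directly via Theorem \ref{uniform-bound}, and noting that the discrepancy between commuting with $N$ and with $T$ is a lower-order term supported in $r\le r_{NH}$---are consistent with that scheme and fill it in correctly.
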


\section{Decay estimate}\label{decay estimate-RW-Zerilli}

In this Section we prove quadratic decay of the non-degenerate energy.
First of all, we improve the local integrated decay estimate in Theorem \ref{Integrated decay estimate}.  We shall use the $r^p$ hierarchy estimate for proving energy decay estimate  \cite{S-13}.

\subsection{Energy decay}\label{rp}

Let $\solu$ be a solution of Regge-Wheeler equation \eqref{RW} or Zerilli equations \eqref{Zerilli} and define
\begin{equation}\label{def-Psi-Solu}
\Solu\doteq r\solu,
\end{equation}
 then
\begin{equation}\label{RW-Solu}
\L_{RW}\Solu\doteq\pu\pv\Solu -\eta \laplacianslash\Solu+V^{RW}\Solu=0,
\end{equation}
in Regge-Wheeler case, and 
\begin{equation}\label{Zerilli-Solu}
\L_{Z}\Solu\doteq\pu\pv\Solu -\eta \laplacianslash\Solu+V^{Z}\Solu=0,
\end{equation}
in Zerilli case.
Here 
\begin{align}
V^{RW} &=\eta\hat V^{RW}, \,\, \hat V^{RW}=-\frac{6M}{r^3},   \label{RW-V-hat}\\ 
V^{Z}&=\eta\hat V^{Z}, \,\,\,\,\,\,\,\,\,\,\,\, \hat V^{Z}=-\frac{6M}{r^3} -\frac{8M\epsilon}{r^3} \label{Zerilli-V-hat}.
\end{align}
We recall that  $V_g^{Z}=V_g^{RW}(1+\zeta),$ where $\zeta$ is defined in \eqref{epsilon} and $\eta=1-\mu.$

When there is no confusion, we also refer \eqref{RW-Solu} as Regge-Wheeler equation and \eqref{Zerilli-Solu}  as Zerilli equations.
\begin{figure}
\centering
\includegraphics[width=3.6in]{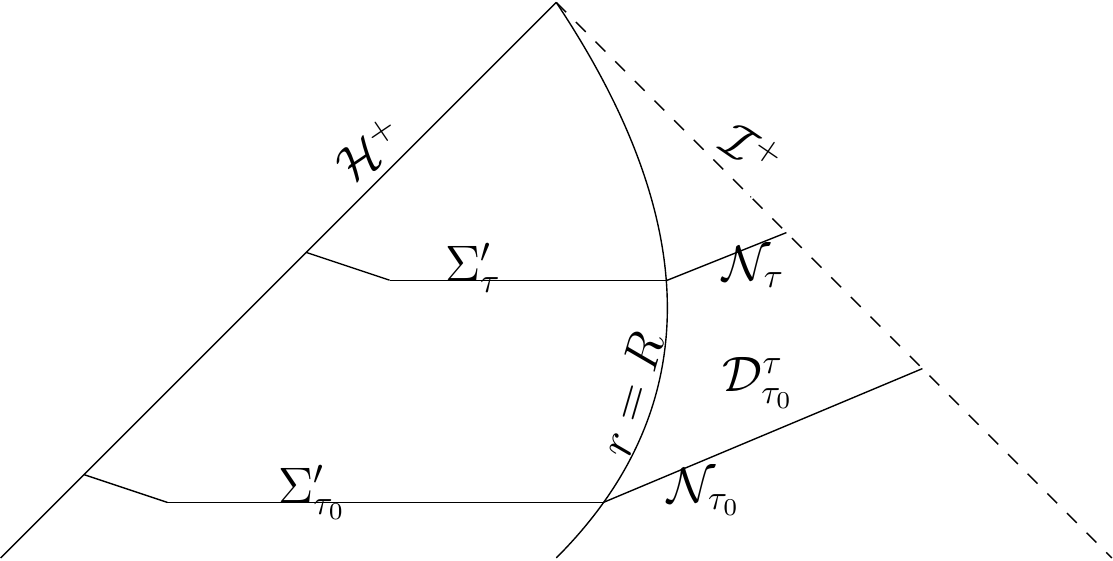}
\caption{The spacetime foliation $\bigcup_{\tau}\Sigma^\prime_{\tau}\cup\N_{\tau}$}
\label{fig:rpfoliation-u}
\end{figure}
We recall the definition of spacetime foliation $\bigcup_{\tau} \Sigma_\tau$ (Section \ref{Energy Estimate}), where $\Sigma_\tau=\Sigma^i_\tau \cup \Sigma^e_\tau$ with 
\begin{equation*}
\Sigma^i_\tau = \{\M |v=\tau +r_{NH}^\ast \} \cap  \{  r < r_{NH} \},
\end{equation*}
and
\begin{equation*}
\Sigma^e_\tau= \{\M |t=\tau \} \cap  \{  r\geq r_{NH} \}.
\end{equation*}
For our current purposes, we foliate the spacetime region by $\bigcup_{\tau}\Sigma^\prime_{\tau}\cup\N_{\tau}$ (Figure \ref{fig:rpfoliation-u}):
Fix $R>3M$ large enough, define the interior region $\cup_{\tau}\Sigma^\prime_{\tau}$, where
\begin{equation}\label{interior-foliation}
\Sigma^\prime_\tau =\Sigma_\tau \cap  \{  r \leq R \}.
\end{equation}
In the exterior region $\{r\geq R\}$, let $\mathcal{N}_\tau$ be the outgoing null hypersurface emerging from the sphere $S^2(\tau,R)$ with constant $t=\tau$ and constant $r=R$, 
\begin{equation}\label{null-hyper}
\mathcal{N}_\tau\doteq \{ u=\tau-R^\ast \} \cap \{ v\geq \tau+R^\ast \}.
\end{equation}
Let us also define a region bounded 
by the two null hypersurfaces and the time-like hypersurface (Figure \ref{fig:rpfoliation-u}):
\begin{equation}\label{outer-region}
\D_{\tau_1}^{\tau_2}=\{   (u,v) \in \M \big|  r(u,v)\geq R \quad  \text{and} \quad \tau_2-R^\ast \geq u \geq \tau_1-R^\ast \}.
\end{equation}

\begin{lemma}[Zero Order $r^p$ Integrated Decay Estimate]\label{lema-rp-hierarchy-0}
Let $\Solu$ be a solution of Regge-Wheeler \eqref{RW-Solu} or Zerilli equations \eqref{Zerilli-Solu}. Consider  the region $\D_{\tau_1}^{\tau_2}$, for $0< p\leq 2$, there is the integrated decay estimate
\begin{equation}\label{ineq-rp-0}
\begin{split}
\int_{\N_{\tau_2}}  r^p |\pv\Solu|^2\di v\di \sigma_{S^2} + \int_{\scri}  r^p \left( |\nablaslash\Solu|^2+ \frac{\Solu^2}{r^2} \right)\di u\di \sigma_{S^2}  {}&{}\\
+\doubleint_{\D_{\tau_1}^{\tau_2}}  r^{p-1} \{ \frac{p}{2}  |\pv\Solu|^2 +  \frac{2-p}{2} \left( |\nablaslash\Solu|^2 +  \frac{|\Solu|^2}{r^2} \right)+ \frac{6pM}{r} \frac{|\Solu|^2}{r^2} \} \di u\di v\di \sigma_{S^2}   {}&{}\\
\lesssim  \int_{\N_{\tau_1}}  r^p |\pv\Solu|^2\di v\di \sigma_{S^2} 
+\int_{\{r=R\}}  \{ |\pv\Solu|^2+ |\nablaslash\Solu|^2 +  |\Solu|^2  \}\di t \di \sigma_{S^2}.{}&{}
\end{split}
\end{equation}
\end{lemma}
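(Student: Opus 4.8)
The plan is to run the Dafermos--Rodnianski $r^p$ vector field method: contract the equation \eqref{RW-Solu} (resp.\ \eqref{Zerilli-Solu}) with the multiplier $r^p\pv\Solu$ and integrate over $\D_{\tau_1}^{\tau_2}$ against the measure $\di u\,\di v\,\di\sigma_{S^2}$. Writing the operator as $\pu\pv\Solu-\eta\laplacianslash\Solu+V\Solu$, with $V=V^{RW}$ or $V^{Z}$, each term is reorganised into a null divergence plus a bulk density. For the wave term one uses the pointwise identity $r^p\pv\Solu\,\pu\pv\Solu=\tfrac12\pu\!\left(r^p|\pv\Solu|^2\right)-\tfrac12\left(\pu r^p\right)|\pv\Solu|^2$, and since $\pu r<0$ for $r>2M$ the resulting bulk density $\tfrac{p}{2}r^{p-1}|\pv\Solu|^2$ is nonnegative. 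For the angular term I would first integrate by parts on $S^2(t,r)$ and use that $\pv$ commutes with the unit-sphere gradient, turning $-\eta r^p\pv\Solu\,\laplacianslash\Solu$ into $\tfrac12\pv\!\left(\eta r^p|\nablaslash\Solu|^2\right)$ modulo a bulk density whose leading part is $\tfrac{2-p}{2}r^{p-1}|\nablaslash\Solu|^2$, plus corrections lower order in $1/r$ coming from $\pv\eta$ and $\pv r$. The potential term is written as $\tfrac12\pv\!\left(r^pV\Solu^2\right)-\tfrac12\pv\!\left(r^pV\right)\Solu^2$. Both potentials share the leading $r^{-3}$ behaviour, so the Regge--Wheeler and Zerilli cases are treated identically.

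Next I would apply the divergence theorem (equivalently, the fundamental theorem of calculus in $u$ and in $v$) on $\D_{\tau_1}^{\tau_2}$, whose boundary consists of the two outgoing null cones $\N_{\tau_1},\N_{\tau_2}$ (constant $u$), the timelike wall $\{r=R\}$, and future null infinity $\scri$. The $\pu$-divergence yields the fluxes $\int_{\N_{\tau_2}}r^p|\pv\Solu|^2$ (kept on the left) and $\int_{\N_{\tau_1}}r^p|\pv\Solu|^2$ (moved to the right as data), together with a contribution on $\{r=R\}$. The $\pv$-divergences produce the $\scri$-flux $\int_{\scri}r^p|\nablaslash\Solu|^2$ and further $\{r=R\}$-contributions; because $r^pV\sim r^{p-3}\to 0$ for $p\le 2$, the potential generates no flux at $\scri$. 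All the $\{r=R\}$ terms are bounded by $\int_{\{r=R\}}\big\{|\pv\Solu|^2+|\nablaslash\Solu|^2+|\Solu|^2\big\}\,\di t\,\di\sigma_{S^2}$, which is the second term on the right of \eqref{ineq-rp-0}.

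It then remains to show the total bulk density dominates $r^{p-1}\big\{\tfrac{p}{2}|\pv\Solu|^2+\tfrac{2-p}{2}(|\nablaslash\Solu|^2+\Solu^2/r^2)+\tfrac{6pM}{r}\Solu^2/r^2\big\}$. The crucial input here is the spectral gap $\ell\ge 2$ from Remark \ref{rk:mode}, i.e.\ the Poincar\'e inequality \eqref{angular-int}: the sign-indefinite zeroth-order contributions produced by $\pv(r^pV)$ and by the lower-order angular corrections are absorbed into the positive leading angular density, using $|\nablaslash\Solu|^2\ge \tfrac{6}{r^2}\Solu^2$ on each sphere. This is exactly what both generates the $\Solu^2/r^2$ bulk term and leaves the favourable $\tfrac{6pM}{r}\tfrac{\Solu^2}{r^2}$ term, and the same inequality \eqref{angular-int} justifies adjoining the weaker $\Solu^2/r^2$ piece to the $\scri$-flux.

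The main obstacle is precisely this coercivity of the bulk, uniformly in $0<p\le 2$. The delicate case is the endpoint $p=2$: there the coefficient $\tfrac{2-p}{2}$ of the leading angular density degenerates to zero, so the spectral-gap absorption loses its principal good term, and one must check that the surviving densities --- $|\pv\Solu|^2$ together with the potential-generated $\tfrac{6pM}{r}\tfrac{\Solu^2}{r^2}$ --- still keep the bulk nonnegative. This degeneracy, flagged in the introduction, is the same mechanism that later forces the additional integrations by parts in the higher-order $r\pv$-commuted analysis; at zeroth order, however, it suffices that the retained densities remain nonnegative, so the estimate \eqref{ineq-rp-0} closes.
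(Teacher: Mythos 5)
You have correctly identified where the difficulty lies (coercivity of the bulk uniformly up to $p=2$), but your proposed resolution at the endpoint does not work, and the reason is concrete: with the plain multiplier $r^p\pv\Solu$ the bulk is \emph{not} nonnegative at $p=2$. First, the good term $\tfrac{6pM}{r}r^{p-1}\tfrac{|\Solu|^2}{r^2}$ is not ``potential-generated'' as you assert: since $V=-6M\eta/r^3$ (Regge--Wheeler case), the density $-\tfrac12\pv(r^pV)\,\Solu^2$ has leading coefficient proportional to $6M(p-3)r^{p-4}$, which is \emph{negative} for all $0<p\le 2$ (this is exactly the paper's \eqref{neg-rw}). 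Second, the subleading correction you wave at (``lower order in $1/r$ coming from $\pv\eta$ and $\pv r$'') in the angular term is also negative for the unweighted multiplier: $\pv$ falling on the weight $\eta r^{p-2}$ produces $-Mr^{p-2}\eta\,|\nablaslash\Solu|^2$ up to a positive constant. At $p=2$ the leading angular coefficient $\tfrac{2-p}{2}$ vanishes, the spectral gap \eqref{angular-int} can only exchange $|\nablaslash\Solu|^2$ against $\Solu^2/r^2$ --- both of which now carry negative coefficients --- and a Hardy inequality in $v$ is borderline at the exponent $r^{p-3}=r^{-1}$ and fails. So the step ``it suffices that the retained densities remain nonnegative, so the estimate closes'' is precisely the assertion that is false for your choice of multiplier; no bookkeeping of the surviving terms rescues it.

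The paper's proof avoids this with a genuinely different multiplier: $\eta^{-k}r^p\pv\Solu$ with $k=4$ (see the identity \eqref{id-r-p}). Differentiating the weight $\eta^{-k+1}$ generates the \emph{additional positive} angular bulk term $2M(k-1)r^{p-2}\eta^{-k+1}|\nablaslash\Solu|^2$, present uniformly in $0<p\le2$; note that formally setting $k=0$ flips its sign, which is your situation. Feeding a fraction $c_1$ of this term through $\ell(\ell+1)\ge6$ (as in \eqref{positive-add-rw}) absorbs the negative $6M(p-3)r^{p-4}|\Solu|^2$ from the potential and leaves the coefficient $6M(p-3+2c_1k-2c_1)$, which with $c_1=\tfrac12$, $k=4$ equals $6Mp$ --- this is the origin of the $\tfrac{6pM}{r}$ bulk term emphasized in Remark \ref{rk-0order-rp} and needed later at $p=2$ in the first-order hierarchy. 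In the Zerilli case the same weight is used together with the bounds \eqref{epsilon-estimate} on $\zeta$ and the monotonicity $\dr\zeta>0$, so your claim that the two cases are ``treated identically'' also needs this input rather than just matching leading $r^{-3}$ asymptotics. Finally, a small point at $\scri$: the paper does not discard the potential flux there; it keeps $r^pV\eta^{-k}\Solu^2$ and shows the total $\scri$ flux is positive via the spectral gap, which is safer than your appeal to $r^pV\to0$, since boundedness of $\Solu$ at $\scri$ is not available inside the proof. With the multiplier replaced by $\eta^{-4}r^p\pv\Solu$ and the bulk bookkeeping redone as above, the rest of your outline (boundary fluxes on $\N_{\tau_1},\N_{\tau_2}$ and $\{r=R\}$, absorption via \eqref{angular-int}) goes through.
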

\begin{remark}\label{rk-0order-rp}
Comparing with the zero order $r^p$ weighted inequality in \cite{S-13}, we get additional bound for the spacetime integral $\doubleint_{\D_{\tau_1}^{\tau_2}} r^{p-1} \frac{6pM}{r} \frac{|\Solu|^2}{r^2} \di u\di v\di \sigma_{S^2}.$ This would be important in proving first order $r^p$ weighted inequality (for $p=2$) in Lemma \ref{lema-rp-hierarchy-1}.
\end{remark}
\begin{proof}
Multiplying $\eta^{-k} r^{p} \p_v \Psi$ with $0< p\leq 2, k\geq 2$ on the Regge-Wheeler or Zerilli equations \eqref{RW-Solu}, \eqref{Zerilli-Solu}, we choose $R$ sufficiently large and integrate with respect to the measure $\di u \di v \di \sigma_{S^2}$ in $\D_{\tau_1}^{\tau_2}$, to derive the identity
\begin{equation}\label{id-r-p}
\begin{split}
 {}&{} \int_{\N_{\tau_2}}  \left( \frac{r^p}{\eta^k} |\pv\Solu|^2 \right) + \int_{\scri}  \left( \frac{r^p}{\eta^{k-1}}|\nablaslash\Solu|^2  +   \frac{r^pV}{\eta^k}|\Solu|^2  \right) \\
+{}&{} \doubleint_{\D_{\tau_1}^{\tau_2}}   \left( (2-p)r^{p-1}\eta^{-k+2} + 2M(k-1) r^{p-2} \eta^{-k+1} \right) |\nablaslash \Solu |^2  \\
- {}&{} \doubleint_{\D_{\tau_1}^{\tau_2}} \pu \left( \frac{r^p}{\eta^k} \right)|\pv\Solu|^2 +  \pv \left( \frac{r^pV}{\eta^k} \right)|\Solu|^2 \\
= {}&{}  \int_{\N_{\tau_1}}  \left( \frac{r^p}{\eta^k} |\pv\Solu|^2 \right) + \int_{\{r=R\}}\left( \frac{r^p}{\eta^k} |\pv\Solu|^2  + \frac{r^p}{\eta^{k-1}}|\nablaslash\Solu|^2 + \frac{r^pV}{\eta^k}|\Solu|^2    \right), 
\end{split}
\end{equation}
where $V$ could be $V^{RW}$ or $V^Z$ in those two different cases.
For $R$ sufficiently large,
$$ -\pu( \frac{r^p}{\eta^k} )\geq \frac{p}{2}r^{p-1} \text{ for all } 0< p\leq 2 \quad \text{and} \quad k\leq 5. $$
Next, we will prove the positivity of the other bulk terms in \eqref{id-r-p} for Regge-Wheeler and Zerilli cases separately.

{\bf Regge-Wheeler case $V=V^{RW}$:} In the third line of \eqref{id-r-p}, the term involving $|\Solu|^2$ is 
\begin{equation}\label{neg-rw}
\begin{split}
 {}&{} - \doubleint_{\D_{\tau_1}^{\tau_2}} \pv \left( \frac{r^pV^{RW}}{\eta^k} \right) |\Solu|^2= \doubleint_{\D_{\tau_1}^{\tau_2}} \pv\left(\frac{6Mr^{p-3}}{\eta^{k-1}}\right) |\Solu|^2 \\
{}&=\doubleint_{\D_{\tau_1}^{\tau_2}} \eta^{-k+1} \left( 6M(p-3) r^{p-4} +12M^2(4-k-p) r^{p-5} \right) |\Solu|^2.
\end{split}
\end{equation}
\eqref{neg-rw} does not have the good sign.  But there is additional positive terms $ \doubleint_{\D_{\tau_1}^{\tau_2}} 2M(k-1) r^{p-2} \eta^{-k+1}  |\nablaslash \Solu |^2$  in the second line of \eqref{id-r-p}. Using the fact that $\Solu$ has angular frequency $\ell \geq 2$ as stated in remark \ref{rk:mode},  we have
\begin{equation}\label{positive-add-rw}
\int_{S^2(t, r)} 2M(k-1) r^{p-2} \eta^{-k+1} |\nablaslash \Solu |^2 \geq \int_{S^2(t, r)} 12M(k-1) r^{p-4} \eta^{-k+3}|\Solu|^2.
\end{equation}
This additional positive term could be used to absorb the negative terms in \eqref{neg-rw}. 
Therefore, the bulk terms in \eqref{id-r-p} dominate
\begin{equation}\label{positive-bulkterm-1-rw}
\begin{split}
 \doubleint_{\D_{\tau_1}^{\tau_2}}\left( 6M(p-3+2c_1k-2c_1) r^{p-4} +12M^2(4-k-p) r^{p-5} \right)  |\Solu|^2 {}&{} \\
 +\doubleint_{\D_{\tau_1}^{\tau_2}} r^{p-1} |\pv\Solu|^2 +(2-p) r^{p-1}|\nablaslash\Solu|^2 +2M(1-c_1)(k-1) r^{p-2}  |\nablaslash \Solu |^2 {}&{},
\end{split}
\end{equation}
for some constant $c_1\leq 1$. We take $c_1=\frac{1}{2}, k=4$. Then for $R$ sufficiently large, the bulk terms \eqref{positive-bulkterm-1-rw} bound 
$$ \doubleint_{\D_{\tau_1}^{\tau_2}} \frac{p}{2}  r^{p-1} |\pv\Solu|^2 +  \frac{2-p}{2}r^{p-1} \left(|\nablaslash\Solu|^2 +  \frac{|\Solu|^2}{r^2}\right)  +\frac{6pM}{r}r^{p-1} \frac{|\Solu|^2}{r^2} ,$$ which is positive.
Here we have used the fact that $\ell \geq 2$.
Therefore, integrating the identity \eqref{id-r-p} for $0< p\leq 2$ and $k=4$ with respect to the measure $\di u\di v\di \sigma_{S^2}$ in the region $\D_{\tau_1}^{\tau_2}$, we have 

\begin{align}
\int_{\N_{\tau_2}} \frac{r^p}{\eta^4} |\pv\Solu|^2 \di v\di \sigma_{S^2} +  \int_{\scri}  \frac{r^p}{\eta^3} \{|\nablaslash\Solu|^2 - \frac{6M}{r} \frac{|\Solu|^2}{r^2} \}  \di u\di \sigma_{S^2}{}&{} \label{Psi-rpv-energy-rw}\\
+\doubleint_{\D_{\tau_1}^{\tau_2}}  r^{p-1}  \{ \frac{p}{2} |\pv\Solu|^2 +  \frac{2-p}{2} (|\nablaslash\Solu|^2 +  \frac{|\Solu|^2}{r^2}) + \frac{6pM}{r}\frac{|\Solu|^2}{r^2} \} \di u\di v\di \sigma_{S^2}  {}&{} \label{Psi-bulk-rp-rw}\\
\lesssim   \int_{\N_{\tau_1}} \frac{r^p}{\eta^4} |\pv\Solu|^2 \di v\di \sigma_{S^2} 
+\int_{r=R}  \{ |\pv\Solu|^2+ |\nablaslash\Solu|^2 +  |\Solu|^2  \} \di t \di \sigma_{S^2}. {}&{} \label{Psi-bdy-rp-rw}
\end{align}
Due to the fact that $\ell\geq 2$, the integral on the scri $\scri$ in \eqref{Psi-rpv-energy-rw} is actually positive. And $\eta\sim 1$ for $R$ large enough, thus we achieve the integrated decay estimate \eqref{ineq-rp-0} for Regge-Wheeler case.

{\bf Zerilli case $V=V^Z$:}  In \eqref{id-r-p}, the term involving $|\Solu|^2$ is multiplied by
\begin{equation}\label{neg-z}
\begin{split}
 -  \pv \left(\frac{r^pV^Z}{\eta^k} \right) =  \pv\left(\frac{6Mr^{p-3}}{\eta^{k-1}}\right) + \pv\left(\frac{8Mr^{p-3} \zeta}{\eta^{k-1}}\right)  {}&{} \\
>\eta^{-k+1} \left(12M(p-3) r^{p-4} +24M^2(4-k-p) r^{p-5} \right) {}&{},
\end{split}
\end{equation}
where we used the fact that (see \eqref{epsilon}. \eqref{dr-epsilon-1}),
\begin{equation}\label{epsilon-estimate}
-\frac{3}{2(2\bar\lambda+3)}\leq \zeta \leq\frac{3}{2 \bar\lambda},\quad | \zeta | \leq\frac{3}{2 \bar\lambda}, \quad \dr \zeta >0,
\end{equation}
and $0<p\leq 2, \, k \leq 4.$
As above, those terms in \eqref{neg-z} do not have the good sign.  But the additional positive terms $2M(k-1) r^{p-2} \eta^{-k+1}  |\nablaslash \Solu |^2$ in \eqref{id-r-p} could be used to absorb the negative terms in \eqref{neg-z}. Since $\ell \geq 2$, the bulk terms in \eqref{id-r-p} dominate
\begin{equation}\label{positive-bulkterm-1-z}
\begin{split}
\doubleint_{\D_{\tau_1}^{\tau_2}} \left(12M(p-3+c_2k-c_2) r^{p-4} +24M^2(4-k-p) r^{p-5} \right)  |\Solu|^2 {}&{} \\
 +\doubleint_{\D_{\tau_1}^{\tau_2}} r^{p-1} |\pv\Solu|^2 +(2-p) r^{p-1}|\nablaslash\Solu|^2 +2M(1-c_2)(k-1) r^{p-2}  |\nablaslash \Solu |^2 {}&{},
\end{split}
\end{equation}
for some constant $c_2\leq 1$. We take $c_2=1, k=4$, then for $R$ sufficiently large, the bulk terms \eqref{positive-bulkterm-1-z} bound 
$$\doubleint_{\D_{\tau_1}^{\tau_2}} \frac{p}{2}  r^{p-1} |\pv\Solu|^2 +  \frac{2-p}{2}r^{p-1} \left(|\nablaslash\Solu|^2 +  \frac{|\Solu|^2}{r^2} \right)  +  \frac{6pM}{r}r^{p-1} \frac{|\Solu|^2}{r^2} ,$$ which is positive.
Integrating the identity \eqref{id-r-p} for $0< p\leq 2$ and $k=4$, we have

\begin{align*}
\int_{\N_{\tau_2}} \frac{r^p}{\eta^4} |\pv\Solu|^2 \di v\di \sigma_{S^2} +  \int_{\scri} \frac{r^p}{\eta^3} \{|\nablaslash\Solu|^2 + \hat V^{Z} |\Solu|^2\}  \di u\di \sigma_{S^2} {}&{} \\
+\doubleint_{\D_{\tau_1}^{\tau_2}} r^{p-1}  \{ \frac{p}{2} |\pv\Solu|^2 +  \frac{2-p}{2} \left( |\nablaslash\Solu|^2 +  \frac{|\Solu|^2}{r^2} \right) + \frac{6pM}{r}\frac{|\Solu|^2}{r^2} \}  \di u\di v\di \sigma_{S^2}  {}&{} \\
\lesssim   \int_{\N_{\tau_1}} \frac{r^p}{\eta^4} |\pv\Solu|^2 \di v\di \sigma_{S^2} 
+\int_{\{r=R \}}  \{ |\pv\Solu|^2+ |\nablaslash\Solu|^2 +  |\Solu|^2  \} \di t \di \sigma_{S^2}. {}&{} 
\end{align*}
In the same way, the integral on scri $\scri: \, \frac{r^p}{\eta^3} \{|\nablaslash\Solu|^2+ \hat V^{Z} |\Solu|^2 \} $ is positive since $\ell\geq 2$. Thus we achieve the integrated decay estimate \eqref{ineq-rp-0} for Zerilli case.

\end{proof}

The $r^p$ integrated decay estimate and the non-degenerate integrated decay estimate lead to the energy decay as follows \cite{S-13}.

\begin{theorem}[Energy Decay]\label{energy decay}
Let $R>3M,$ and let $\solu$ be a solution of Regge-Wheeler \eqref{RW} or Zerilli equations \eqref{Zerilli}, with the initial data on $\Sigma^\prime_{\tau_0} \cup \N_{\tau_0}$ satisfying
\begin{equation}\label{initial data-energy-decay}
\int_{\N_{\tau_0}}\sum_{k\leq1}  |T^k\pv(r\solu)|^2r^2  \di v \di \sigma_{S^2} + \int_{\Sigma^\prime_{\tau_0} \cup \N_{\tau_0}}  \sum_{k\leq2} P^N_\mu(T^k\solu) n^\mu <\infty,
\end{equation}
then there exist a constant $I$ depending on the initial data \eqref{initial data-energy-decay}, such that
\begin{equation}\label{energy decay-1}
\int_{\Sigma_\tau^\prime\cup\N_\tau} P^N_\mu(\solu) n^\mu \lesssim  \frac{I}{\tau^2}.
\end{equation}
Here
\begin{equation*}
 \Sigma^\prime_\tau= \Sigma_\tau \cap  \{  r\leq R \}, \quad \N_{\tau}=\{\M| u=\tau-R^\ast, v \geq v_0=\tau_0+R^\ast \}.
\end{equation*}
\end{theorem}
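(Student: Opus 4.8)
The plan is to follow the $r^p$-hierarchy method of Dafermos--Rodnianski \cite{D-R-09-rp} and Schlue \cite{S-13}, coupling the weighted estimate of Lemma \ref{lema-rp-hierarchy-0} in the exterior region $\{r\geq R\}$ with the non-degenerate interior estimate of Corollary \ref{nondeg-Integrated decay estimate} and the uniform boundedness of Theorem \ref{uniform-bound}. Writing $\Solu=r\solu$, set for $0\leq p\leq 2$ the weighted null fluxes $F_p(\tau)=\int_{\N_\tau}r^p|\pv\Solu|^2\,\di v\,\di\sigma_{S^2}$, and let $E(\tau)=\int_{\Sigma^\prime_\tau\cup\N_\tau}P^N_\mu(\solu)n^\mu$ be the total energy to be bounded. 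Foliating the region $\D_{\tau_1}^{\tau_2}$ by the null slices $\N_\tau$, the spacetime bulk term in \eqref{ineq-rp-0} becomes $\int_{\tau_1}^{\tau_2}\big(\int_{\N_\tau}r^{p-1}[\tfrac p2|\pv\Solu|^2+\tfrac{2-p}2(|\nablaslash\Solu|^2+\tfrac{|\Solu|^2}{r^2})+\tfrac{6pM}{r}\tfrac{|\Solu|^2}{r^2}]\big)\,\di\tau$, so that Lemma \ref{lema-rp-hierarchy-0} reads schematically as $F_p(\tau_2)+\int_{\tau_1}^{\tau_2}(\text{level }p-1)\,\di\tau\lesssim F_p(\tau_1)+B(\tau_1,\tau_2)$, where $B(\tau_1,\tau_2)$ is the flux through the timelike tube $\{r=R\}$ over $t\in[\tau_1,\tau_2]$.

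The first technical point is to absorb $B(\tau_1,\tau_2)$. Since $B$ is a spacetime integral of $|\pv\Solu|^2+|\nablaslash\Solu|^2+|\Solu|^2$ localized near $r=R$, it is controlled by the non-degenerate local integrated decay estimate of Corollary \ref{nondeg-Integrated decay estimate} (applied on the slab, after commuting once with $T$ as the Corollary requires), hence by the energy $E(\tau_1)$ together with the data for $T\solu$. This is exactly why the hypothesis \eqref{initial data-energy-decay} carries one extra $T$-derivative ($k\leq 2$ for the interior energy, $k\leq 1$ for the weighted null flux). A matching observation is that the level-$0$ bulk produced by taking $p=1$ reconstructs the full $N$-energy flux through $\N_\tau$, which, combined with the interior piece supplied by Corollary \ref{nondeg-Integrated decay estimate}, controls $\int_{\tau_1}^{\tau_2}E(\tau)\,\di\tau$ from below.

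With these preparations, the decay is extracted by the standard two-step averaging (pigeonhole) argument. Running Lemma \ref{lema-rp-hierarchy-0} at the endpoint $p=2$ gives $\int_{\tau_0}^{\infty}F_1(\tau)\,\di\tau\lesssim F_2(\tau_0)+B(\tau_0,\infty)\lesssim I$, where $I$ is the $r^2$-weighted initial quantity; here one crucially uses that although the angular coefficient $2-p$ vanishes at $p=2$, the extra term $\tfrac{6pM}{r}\tfrac{|\Solu|^2}{r^2}$ (Remark \ref{rk-0order-rp}) survives and, together with the $\tfrac p2|\pv\Solu|^2$ term, still bounds $F_1$ from below. Combining $\int_{\tau_0}^\infty F_1<\infty$ with the near-monotonicity $F_1(\tau_2)\lesssim F_1(\tau_1)+E(\tau_1)$ obtained from the $p=1$ estimate and boundedness (Theorem \ref{uniform-bound}), an averaging over dyadic intervals $[\tau/2,\tau]$ upgrades this to the pointwise-in-$\tau$ bound $F_1(\tau)\lesssim I/\tau$. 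Feeding this into the $p=1$ estimate, whose bulk controls $\int E$, and performing the averaging argument a second time yields $E(\tau)\lesssim I/\tau^2$, which is \eqref{energy decay-1}.

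I expect the main obstacle to be the bookkeeping of the boundary terms at $r=R$ that mediate between the exterior $r^p$ region and the interior red-shift region: one must verify that $B(\tau_1,\tau_2)$ is absorbed by Corollary \ref{nondeg-Integrated decay estimate} with weights compatible with the iteration, and that the resulting \emph{almost-monotonicity} of $F_1$ and $E$ is genuine rather than merely boundedness. The subsidiary difficulty is the degeneration at $p=2$, where the loss of the angular-derivative coefficient must be compensated by the potential term isolated in Lemma \ref{lema-rp-hierarchy-0}; this is precisely the improvement over \cite{S-13} noted in Remark \ref{rk-0order-rp}, and it is what allows the endpoint $p=2$ (hence the clean $\tau^{-2}$ rather than $\tau^{-2+\delta}$) to be reached.
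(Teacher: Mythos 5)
Your proposal follows the same route as the paper's own proof (the Schlue-type scheme: Lemma \ref{lema-rp-hierarchy-0} at $p=2$ and $p=1$, boundary terms at $r=R$ absorbed through the non-degenerate estimate of Corollary \ref{nondeg-Integrated decay estimate} at the cost of one $T$-commutation, then a double dyadic pigeonhole closed by the quasi-monotonicity \eqref{uniform boundness-v}), and your data bookkeeping matches \eqref{initial data-energy-decay}. However, one intermediate step is wrong as justified: the ``near-monotonicity'' $F_1(\tau_2)\lesssim F_1(\tau_1)+E(\tau_1)$ cannot upgrade the dyadic-sequence bound to the pointwise bound $F_1(\tau)\lesssim I/\tau$ for all $\tau$, because at this stage $E(\tau_1)$ (and the $\{r=R\}$ boundary term, which is controlled through the Morawetz estimate by the degenerate energies of $\solu$ and $T\solu$ at time $\tau_1$) is only known to be \emph{bounded}, so your inequality returns $F_1(\tau)\lesssim I/\tau + I\sim I$, not $I/\tau$. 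This is harmless only because pointwise decay of $F_1$ is never needed: the paper uses the $p=2$ estimate to obtain $F_1$ decay along the dyadic sequence $\{\tau'_j\}$ only (see \eqref{non-deg-integrated decay estimate-p2}) and feeds those dyadic values directly into the $p=1$ step; you should do the same and drop the pointwise upgrade.

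A second, related soft spot is your final transfer from dyadic times to all $\tau$: tracking $E(\tau)$ alone does not suffice, since the quasi-monotonicity \eqref{uniform boundness-v} carries the inhomogeneous term $\int P^T_\mu(T\solu)n^\mu$ at the dyadic time, which does not decay unless it is included in the bootstrapped quantity. The paper therefore runs the dyadic argument for the joint quantity $\int\left(P^N_\mu(\solu)+P^T_\mu(T\solu)\right)n^\mu$ — this is precisely what \eqref{pigeon-uniform-energy} and \eqref{pigeon-uniform-energy-decay} accomplish, and it is the second reason (besides boundary absorption) why the hypothesis carries the $k\leq 1$ $T$-derivative on the null flux: the $p=2$ estimate must also be applied to $T\Solu$. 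Finally, a misattribution worth correcting: at zero order the endpoint $p=2$ is unproblematic even without the extra term of Remark \ref{rk-0order-rp}, since the bulk $\frac{p}{2}r^{p-1}|\pv\Solu|^2$ alone controls $\int F_1\,\di\tau$; the term $\frac{6pM}{r}\frac{|\Solu|^2}{r^2}$ and the $(2-p)^2$ gain are needed for the \emph{first-order} estimate (Lemma \ref{lema-rp-hierarchy-1}), which enters only in the later improved decay of $T\solu$, and Schlue's $\delta$-loss occurs at that stage rather than in the energy decay \eqref{energy decay-1} itself.
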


\begin{proof}
We will only give the sketch of proof here (refer to \cite{S-13} for more details). 
From the non-degenerate integrated decay estimate (Corollary \ref{nondeg-Integrated decay estimate}) and conservation of $\partial_t$ energy (imposing zero data on null infinity to evolve backwards where necessary)
\begin{equation}\label{non-deg-integrated decay estimate-v}
\begin{split}
\int_{\tau_0}^\tau dt\int_{\Sigma'_t} P^N_\mu(\psi)n^\mu
{}&\lesssim \int_{\Sigma'_\tau\cup\mathcal{N}_\tau} P^N_\mu(\psi)n^\mu 
+\int_{\Sigma_\tau} P^T_\mu(T\psi)n^\mu \\
{}&\lesssim \left(\int_{\Sigma'_\tau\cup\mathcal{N}_\tau} P^N_\mu(\psi)n^\mu +P^T_\mu(T\psi)n^\mu \right).
\end{split}
\end{equation}
Using the spacetime foliation $\bigcup_{\tau}\Sigma_\tau^\prime\cup\N_\tau$, we also have the uniform boundness (Theorem \ref{uniform-bound}), for any $\tau_2 > \tau_1$,
\begin{equation}\label{uniform boundness-v}
\int_{\Sigma_{\tau_2}^\prime\cup\N_{\tau_2}} P^N_\mu(\solu) n^\mu \lesssim  \int_{\Sigma_{\tau_1}^\prime\cup\N_{\tau_1}} P^N_\mu(\solu) n^\mu + P^T_\mu(T\solu) n^\mu.
\end{equation}

We begin with an inequality, for any $\tau_2 > \tau_1$
\begin{equation}\label{non-deg-integrated decay estimate-1}
 \begin{split}
&\int_{\tau_1}^{\tau_{2}} \di \tau \int_{\Sigma_{\tau}^\prime\cup\N_{\tau}} P^N_\mu(\solu) n^\mu \lesssim \doubleint_{\D_{\tau_1}^{\tau_{2}}} \left(   |\pv\Solu|^2 +  |\nablaslash\Solu|^2 +  \frac{|\Solu|^2}{r^2} \right) \di u\di v\di \sigma_{S^2} \\
&\quad \quad \quad \quad \quad \quad \quad \quad \quad \quad + \int_{\Sigma_{\tau_1}^\prime\cup\N_{\tau_1}} P^N_\mu(\solu) n^\mu + P^T_\mu(T\solu) n^\mu,
 \end{split}
\end{equation}
where we had used the non-degenerate integrated decay estimate \eqref{non-deg-integrated decay estimate-v}.
Taking $p=1$ in the $r^p$ weighted inequality of Lemma \ref{lema-rp-hierarchy-0}, we can further estimate \eqref{non-deg-integrated decay estimate-1} by
\begin{equation}\label{non-deg-integrated decay estimate-p1}
\begin{split}
&\int_{\tau_1}^{\tau_{2}} \di \tau \int_{\Sigma_{\tau}^\prime\cup\N_{\tau}} P^N_\mu(\solu) n^\mu \lesssim \int_{\N_{\tau_1}}   r |\pv\Solu|^2  \di v\di \sigma_{S^2}\\
&\quad \quad \quad \quad \quad \quad \quad \quad \quad \quad +  \int_{\Sigma_{\tau_1}^\prime\cup\N_{\tau_1}} P^N_\mu(\solu) n^\mu + P^T_\mu(T\solu) n^\mu.
\end{split}
\end{equation}
Next, we take $p=2$ in the $r^p$ weighted inequality of Lemma \ref{lema-rp-hierarchy-0}, then there exists a dyadic sequence $\{\tau'_j\}_{j\in \Naturals}$ with $\tau'_{j+1}=2\tau'_j$ and $\tau'_0 = \tau_0$
 \begin{equation}\label{non-deg-integrated decay estimate-p2}
 \begin{split}
&\int_{\N_{\tau'_j}}   r |\pv\Solu|^2  \di v\di \sigma_{S^2}\\
\lesssim &\frac{1}{\tau'_j} \left(\int_{\N_{\tau_0}}  r^2 |\pv\Solu|^2 \di v\di \sigma_{S^2} + \int_{\Sigma_{\tau_0}^\prime\cup\N_{\tau_0}} P^T_\mu(\solu) n^\mu + P^T_\mu(T\solu) n^\mu \right).
 \end{split}
\end{equation}
Note that, in proving \eqref{non-deg-integrated decay estimate-p2}, there is the boundary term on $\{r=R\}$ as the right hand side of  \eqref{ineq-rp-0},
we can use the mean value theorem for the integration in $r^\ast$, and the local integrated energy decay \eqref{non-deg-integrated decay estimate-v} to bound it by $\int_{\Sigma_{\tau_0}^\prime\cup\N_{\tau_0}} P^T_\mu(\solu) n^\mu + P^T_\mu(T\solu) n^\mu$.
Combining \eqref{non-deg-integrated decay estimate-p1} and \eqref{non-deg-integrated decay estimate-p2},
we have
 \begin{equation}\label{non-deg-integrated decay estimate-pigeon}
 \begin{split}
&\int_{\tau'_j}^{\tau'_{j+1}} \di \tau \int_{\Sigma_{\tau}^\prime\cup\N_{\tau}} P^N_\mu(\solu) n^\mu\\
\lesssim &\frac{1}{\tau'_j} \left(\int_{\N_{\tau_0}}  r^2 |\pv\Solu|^2 \di v\di \sigma_{S^2} + \int_{\Sigma_{\tau_0}^\prime\cup\N_{\tau_0}} P^T_\mu(\solu) n^\mu + P^T_\mu(T\solu) n^\mu \right)\\
&\quad \quad \quad \quad \quad \quad \quad \quad \quad \quad + \int_{\Sigma_{\tau'_j}^\prime\cup\N_{\tau'_j}} P^N_\mu(\solu) n^\mu + P^T_\mu(T\solu) n^\mu.
 \end{split}
\end{equation}
Besides, with the uniform boundness of energy \eqref{uniform boundness-v}, we may estimate the last term in \eqref{non-deg-integrated decay estimate-pigeon} by
\begin{equation}\label{pigeon-uniform-energy}
\begin{split}
& \int_{\Sigma_{\tau'_j}^\prime\cup\N_{\tau'_j}} \left( P^N_\mu(\solu) n^\mu + P^T_\mu(T\solu) n^\mu \right)\\
\lesssim& \frac{1}{\tau'_j } \left(  \int_{\tau'_{j-1}}^{\tau'_{j+1}} \di \tau \int_{\Sigma_{\tau}^\prime\cup\N_{\tau}} P^N_\mu(\solu) n^\mu + \sum_{i\leq 1} P^T_\mu(T^i\solu) n^\mu \right).
\end{split}
\end{equation}
Again we apply \eqref{non-deg-integrated decay estimate-pigeon} on the above term 
 \begin{equation*}
\int_{\tau'_{j-1}}^{\tau'_{j+1}} \di \tau \int_{\Sigma_{\tau}^\prime\cup\N_{\tau}} P^N_\mu(\solu) n^\mu + \sum_{i\leq 1} P^T_\mu(T^i\solu) n^\mu,
\end{equation*}
to derive
\begin{equation}\label{pigeon-uniform-energy-decay}
\begin{split}
& \int_{\Sigma_{\tau'_j}^\prime\cup\N_{\tau'_j}} \left( P^N_\mu(\solu) n^\mu + P^T_\mu(T\solu) n^\mu \right)\\
\lesssim & \frac{1}{\tau'_j  \tau'_{j-1}} \left( \int_{\N_{\tau_0}} \sum_{k \leq 1}  r^2 |\pv T^k \Solu|^2 \di v\di \sigma_{S^2} + \int_{\Sigma_{\tau_0}^\prime\cup\N_{\tau_0}}  \sum_{i \leq 2} P^T_\mu(T^i\solu) n^\mu \right)\\
&\quad  \quad \quad \quad \quad \quad \quad \quad \quad + \frac{1}{\tau'_j } \left(  \int_{\Sigma_{\tau'_{j-1}}^\prime\cup\N_{\tau'_{j-1}}} P^N_\mu(\solu) n^\mu + \sum_{i\leq 2}  P^T_\mu(T^i\solu) n^\mu\right),
\end{split}
\end{equation}
where by the uniform boundness of energy \eqref{uniform boundness-v}, the last term could be further bounded by
 \begin{equation*}
 \frac{1}{\tau'_j } \left(  \int_{\Sigma_{\tau_0}^\prime\cup\N_{\tau_0}} P^N_\mu(\solu) n^\mu + \sum_{i\leq 2} P^T_\mu(T^i\solu) n^\mu\right).
\end{equation*}

Thus, in view of \eqref{non-deg-integrated decay estimate-pigeon} and \eqref{pigeon-uniform-energy-decay}, we have
 \begin{equation}\label{non-deg-integrated decay estimate-pigeon-1}
 \begin{split}
&\int_{\tau'_j}^{\tau'_{j+2}} \di \tau \int_{\Sigma_{\tau}^\prime\cup\N_{\tau}} P^N_\mu(\solu) n^\mu \\
 \lesssim & \frac{1}{\tau'_j}  \left( \int_{\N_{\tau_0}} \sum_{k \leq 1}  |\pv T^k \Solu|^2r^2 \di u\di v\di \sigma_{S^2} 
 + \int_{\Sigma_{\tau_0}^\prime\cup\N_{\tau_0}} \sum_{i\leq 2} P^N_\mu(T^i\solu) n^\mu \right).
 \end{split}
\end{equation}
Finally performing the pigeon-hole principle, we have \eqref{energy decay-1}.
\end{proof}

We further commute the equations with $\Omega$ repeatedly, to have the high order energy decay.
As a result, we have the pointwise decay estimate, 

\begin{theorem}[Pointwise Decay]\label{them-uni-pointwise decay}
Let $\solu$ be a solution of Regge-Wheeler \eqref{RW} or Zerilli equations \eqref{Zerilli}, with the initial data on $\Sigma^\prime_{\tau_0} \cup \N_{\tau_0}$ satisfying
\begin{equation}\label{initial data-pointwise}
\sum_{l\leq2} \left(\int_{\N_{\tau_0}}  \sum_{k\leq1} |T^k \Omega^l\pv(r\solu)|^2r^2  \di v\di \sigma_{S^2}
+ \int_{\Sigma^\prime_{\tau_0} \cup \N_{\tau_0}} \sum_{k\leq2}  P^N_\mu(T^{k}\Omega^l\solu) n^\mu \right)<\infty,
\end{equation}
then there exist a constant $I$ depending on the initial data \eqref{initial data-pointwise}, such that in the future development of initial hypersurface $J^{+}(\Sigma^\prime_{\tau_0}\cup \N_{\tau_0})$
\begin{equation}\label{pointwise decay-uni}
r^\frac{1}{2} |\solu|(\tau, r) \lesssim  \frac{I}{\tau}.
\end{equation}
\end{theorem}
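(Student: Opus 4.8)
The plan is to deduce the pointwise bound from the (commuted) energy decay of Theorem \ref{energy decay} by pairing a Sobolev embedding on the spheres with a one-dimensional integration in the outgoing null direction. Since the rotation fields $\Omega_i$ are Killing and commute with $\Box_g$, and the potentials $V^{RW},V^{Z}$ are spherically symmetric, each $\Omega^l\solu$ with $l\leq 2$ again solves \eqref{RW} (resp.\ \eqref{Zerilli}); the hypothesis \eqref{initial data-pointwise} is exactly the data needed so that Theorem \ref{energy decay} applies to every $\Omega^l\solu$, giving
\begin{equation*}
\sum_{l\leq 2}\int_{\Sigma'_\tau\cup\N_\tau}P^N_\mu(\Omega^l\solu)\,n^\mu\lesssim\frac{I}{\tau^2}.
\end{equation*}
Writing $r^{\frac12}|\solu|=r^{-\frac12}|\Solu|$ and invoking the embedding $H^2(S^2)\hookrightarrow L^\infty(S^2)$, it then suffices to bound, for each fixed $(\tau,r)$, the spherical $L^2$ norms $\int_{S^2}|\Omega^l\Solu(\tau,r,\cdot)|^2\di\sigma_{S^2}$ with $l\leq 2$ by $r\,I/\tau^2$.

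For $r\geq R$ I would work on the outgoing null cone $\N_\tau=\{u=\tau-R^\ast\}$, on which $\pv r=\tfrac12\eta$. Fixing an angular direction and a multi-index $l\leq 2$, set $w=\Omega^l\Solu$ and consider $r^{-1}w^2$. Along $\N_\tau$,
\begin{equation*}
\pv\!\left(\frac{w^2}{r}\right)=-\frac{\eta}{2}\frac{w^2}{r^2}+\frac{2\,w\,\pv w}{r},
\end{equation*}
where the first term has a favourable sign. Integrating from $r=R$, discarding this good term, and estimating $2r^{-1}|w||\pv w|\leq r^{-2}w^2+|\pv w|^2$ gives
\begin{equation*}
\frac{w^2}{r}(\tau,r)\leq\frac{w^2}{R}(\tau,R)+\int_{\N_\tau}\left(\frac{w^2}{r^2}+|\pv w|^2\right)\di v.
\end{equation*}
Integrating over $S^2$ and summing $l\leq 2$, the bulk term is controlled by the flux of $P^N$ through $\N_\tau$ — here I use $\ell\geq 2$ (Remark \ref{rk:mode}) to absorb $r^{-2}w^2$ into the angular energy $|\nablaslash w|^2$ — and hence by $I/\tau^2$. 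The boundary term at $r=R$ is the trace treated next.

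For $r\leq R$, and for that boundary trace, I would use the interior non-degenerate energy. A one-dimensional Sobolev (trace) inequality in $r$ on $\Sigma'_\tau$ bounds $\int_{S^2}|\Omega^l\Solu(\tau,R,\cdot)|^2\di\sigma_{S^2}$, and more generally $\sup_{r\leq R}$ of the relevant quantity, by $\int_{\Sigma'_\tau}(|\partial_r(\,\cdot\,)|^2+|\,\cdot\,|^2/r^2)$, which is part of $P^N_\mu n^\mu$. Combined again with $H^2(S^2)\hookrightarrow L^\infty(S^2)$ and, where a genuine three-dimensional embedding is required, with elliptic estimates and commutation with $T$ (using the $T^k$, $k\leq 2$, in \eqref{initial data-pointwise}) to recover the missing $\partial_r^2$ derivative through the equation, this yields $\sup_\omega|\Solu|^2/R\lesssim I/\tau^2$ and $\sup_{r\leq R}|\solu|^2\lesssim I/\tau^2$. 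Since $r^{\frac12}$ is bounded on $r\leq R$, the two regimes together give $r\,\solu^2\lesssim I/\tau^2$, which is the squared form of \eqref{pointwise decay-uni} up to the usual relabelling of the constant.

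I expect the main obstacle to be obtaining the $r^{\frac12}$ weight \emph{uniformly} out to null infinity: this is exactly what the good sign of the $-\tfrac{\eta}{2}r^{-2}w^2$ term together with the $\ell\geq 2$ Poincar\'e inequality buy us, letting the entire bulk be absorbed into the $P^N$-flux on $\N_\tau$ without appealing to the higher $r^p$ fluxes. A secondary technical point is marshalling enough derivatives — angular $\Omega$, the commutator $T$, and the equation itself via elliptic estimates — to close the three-dimensional Sobolev embedding in the interior region $r\leq R$.
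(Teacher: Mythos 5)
Your proposal is correct and follows essentially the same route as the paper: commute with $\Omega^l$, $l\leq 2$ (using that the $\Omega_i$ are Killing and the potentials spherically symmetric), apply the energy decay of Theorem \ref{energy decay} to each $\Omega^l\solu$, and convert the $\tau^{-2}$ flux decay into the weighted pointwise bound by a one-dimensional integration in the null/radial direction combined with $H^2(S^2)\hookrightarrow L^\infty(S^2)$, absorbing the zeroth-order terms via the $\ell\geq 2$ Poincar\'e inequality. The only minor deviations are that you integrate $\partial_v(w^2/r)$ outward from $r=R$ (paying a boundary trace at $r=R$ that you correctly control by the interior non-degenerate energy) where the paper integrates inward from null infinity, and that near the horizon you use a radial trace inequality on $\Sigma'_\tau$ where the paper instead integrates in $u$ from an anchor $u_0(\tau)$ chosen by a pigeonhole argument; both variants are sound and rest on exactly the same energy quantities.
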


\begin{proof}
On $\N_{\tau}=\{\M|u=\tau-R^\ast, v\geq \tau+R^\ast\}$, integrating from infinity, by the Cauchy Schwarz inequality, we have for any $v\geq\tau+R^\ast$,
\begin{equation*}
(r^\frac{1}{2} \solu)^2 (u,v) = -2\int_{v}^{\infty}r^\frac{1}{2} \solu \pv(r^\frac{1}{2}\solu) \di v \lesssim \int_{v}^{\infty} |\solu|^2 \di v + \int_{v}^{\infty} |r\pv\solu|^2 \di v.
\end{equation*}
And then using the Sobolev inequality on the sphere, we have
\begin{equation}\label{Sobolev-v}
\begin{split}
(r^\frac{1}{2} \solu)^2  &\lesssim \int_{v}^{\infty} \int_{S^2}  \sum_{k\leq 2} \frac{|\Omega^k\solu|^2}{r^2} r^2\di v \di \sigma_{S^2} + \int_{v}^{\infty}  \int_{S^2} \sum_{k\leq 2}|\pv\Omega^k\solu|^2 r^2 \di v \di \sigma_{S^2}\\
&\lesssim \int_{\N_\tau} \sum_{k\leq2} P^N_\mu(\Omega^k\solu) n^\mu\lesssim  \frac{I}{\tau^2}.
\end{split}
\end{equation}

On $\Sigma_\tau\cap \{2M< r_0 \leq r \leq R \}$, the Sobolev inequality also yields the pointwise decay.

Near the horizon $\Sigma_{\tau}^i$, which is $\{\M| v=\tau+r^\ast_{NH}, u\leq u_0(\tau)\}$ with $u_0(\tau)<\tau-r_0^\ast$ to be chosen later, we proceed a similar argument. 
Integrating from $u=\tau-r^\ast_{NH}$, we have for any $u\geq u_0(\tau)$,
\begin{equation*}
\int_{S^2} \solu^2(u,v) \di \sigma_{S^2}= \int_{S^2}\solu^2(u_0(\tau), v) \di \sigma_{S^2} + 2\int^{u}_{u_0(\tau)}\int_{S^2}  \solu \pu\solu \di u \di \sigma_{S^2}.
\end{equation*}
The Cauchy Schwarz inequality gives
\begin{equation*}
 \int_{\Sigma_{\tau}^i} \solu \pu\solu \di u  \di \sigma_{S^2} \lesssim  \int_{\Sigma_{\tau}^i} (1-\mu)\solu^2 \di u  \di \sigma_{S^2}+\int_{\Sigma_{\tau}^i} \frac{(\pu\solu)^2}{1-\mu} \di u  \di \sigma_{S^2}.
\end{equation*}
Again we apply the Sobolev inequality on the sphere to get the pointwise decay estimate. Besides, applying a pigeon-hole argument in $r$ and replacing $\solu$ in \eqref{energy decay-1} by $\Omega^k\solu$, we obtain that $u_0(\tau)$ could be  chosen so that $$ \int_{S^2}\solu^2(u_0(\tau), v) \di \sigma_{S^2} \lesssim \tau^{-2}.$$
In view of the definition of the non-degenerate energy $E^N(\solu,\Sigma^i_\tau)$ in \eqref{nondeg-energy-horizon} (and with the restriction to $r\leq r_{NH}$, we have absorbed factors of $r$ into the constant $C$), we have
\begin{equation*}
\begin{split}
\solu^2(u,v)  &\leq C \int_{\Sigma^i_\tau}  \sum_{k\leq 2} \left( (1-\mu)|\Omega^k\solu|^2 + \frac{(\pu\Omega^k\solu)^2}{1-\mu}\right) r^2\di u \di \sigma_{S^2} \\
 &+ \int_{S^2}\solu^2(u_0(\tau), v) \di \sigma_{S^2} \lesssim  \frac{I}{\tau^2}.
\end{split}
\end{equation*}

\end{proof}

We next proceed to high order $r^p$ integrated decay estimate. For notational convenience, we denote $K_{p-1}(\Solu)$ the spacetime integral \eqref{Psi-bulk-rp-rw} in the $r^p$ weighted inequality，
\begin{equation}\label{def-K-p}
\begin{split}
K_{p-1}(\Solu) \doteq &\doubleint_{\D_{\tau_1}^{\tau_2} }  \left( \frac{p}{2}  r^{p-1} |\pv\Solu|^2 +  \frac{6pM}{r}r^{p-1} \frac{|\Solu|^2}{r^2} \right) \di u\di v\di \sigma_{S^2}\\
+&\doubleint_{\D_{\tau_1}^{\tau_2} }  \frac{2-p}{2}r^{p-1} \left( |\nablaslash\Solu|^2 +  \frac{|\Solu|^2}{r^2} \right) \di u\di v\di \sigma_{S^2},
\end{split}
\end{equation} 
and $S_p(\Solu)$ the energy on $\scri$,
\begin{equation}\label{def-S-p}
S_{p}(\Solu) \doteq  \int_{\scri}  r^p \left(|\nablaslash \Solu|^2 +\frac{\Solu^2}{r^2} \right)\di u\di \sigma_{S^2}.
\end{equation} 
In fact, when proceeding to the first order case, we will commute the equation with $r\pv$.
Define
\begin{equation}\label{def-So1}
\Solu^{(1)} \doteq r\pv \Solu.
\end{equation}
Unavoidably, there will be the leading error term involving angular derivative $\laplacianslash\Solu$ appearing during the commuting procedure. And we need to control these error terms. As we can see from the right hand side of $r^p$ weighted inequality, the spacetime integral involving $|\nablaslash\Solu|^2$ \eqref{Psi-bulk-rp-rw} vanishes when $p=2$, which implies that we will lost the control for the angular derivative in the error terms when $p=2$. To get around this difficulty, we perform the integration by parts twice on the leading term
\begin{equation}\label{leading-laplacian-1st-order-rp}
\doubleint_{\D_{\tau_1}^{\tau_2} }  -2r^{p}\laplacianslash\Solu \pv\So.
\end{equation} 
Then instead of \eqref{leading-laplacian-1st-order-rp}, we get to deal with 
\begin{equation}\label{leading-angular-1st-order-rp}
\doubleint_{\D_{\tau_1}^{\tau_2} }  (2-p)^2 r^{p-1} |\nablaslash\Solu|^2,
\end{equation} 
as we can see in \eqref{eq-A-2}, \eqref{A-2} and \eqref{esimate-A-2-main}. Due to the presence of the additional factor $(2-p)^2$, \eqref{leading-angular-1st-order-rp} which involves the angular derivative vanishes when $p=2$. Thus our estimate go through even for $p=2$. This idea could be found in \cite{rp-ASF-Mos}. This is the main difference from the first order $r^p$ weighted inequality in Proposition 5.6 of \cite{S-13}. We will prove the following Lemma.

\begin{lemma}[First Order $r^p$ Integrated Decay Estimate]\label{lema-rp-hierarchy-1}
Let $\Solu$ be a solution to Regge-Wheeler equation \eqref{RW-Solu} or Zerilli equations \eqref{Zerilli-Solu}, and 
define 
\begin{equation}\label{def-D-derivative}
\mathbb{D}=\{r\pv, (1-\mu)^{-1}\pu, r\nablaslash   \}.
\end{equation}
Considering in the region $\D_{\tau_1}^{\tau_2}$, there is the integrated decay estimate for $0< p \leq 2$,
\begin{equation}\label{rp-1-D}
\begin{split}
  \sum_{j\leq1}  \int_{\N_{\tau_2}}r^p|\pv \mathbb{D}^j \Solu|^2 \di v\di \sigma_{S^2} +  \int_{\scri}  r^p \left(|\pv \mathbb{D}^j \Solu|^2 +\frac{  \mathbb{D}^j\Solu^2}{r^2} \right)  \di u\di \sigma_{S^2} {}&{} \\
+\doubleint_{\D_{\tau_1}^{\tau_2}} \sum_{j\leq1} p r^{p-1} \left(  |\pv\mathbb{D}^j \Solu|^2  +\frac{M}{r}  \frac{|\mathbb{D}^j \Solu|^2}{r^2} \right)  \di u\di v\di \sigma_{S^2} {}&{}  \\
+\doubleint_{\D_{\tau_1}^{\tau_2}}  \sum_{j\leq1}  (2-p)r^{p-1} \left( |\nablaslash\mathbb{D}^j \Solu|^2 +  \frac{|\mathbb{D}^j \Solu|^2}{r^2}\right)\di u\di v\di \sigma_{S^2} {}&{}  \\
\lesssim    \int_{\N_{\tau_1}}    r^p \left( \sum_{j\leq1}  | \pv  \mathbb{D}^j\Solu |^2  +\sum_{l\leq 2} | \pv \Omega^l\Solu |^2    \right) \di v\di \sigma_{S^2} {}&{}\\
+\int_{\{r=R \}} \sum_{i,j\leq1, l \leq 2} |\pv^i\nablaslash^l\mathbb{D}^j \Solu|^2\di t \di \sigma_{S^2}.{}&{}  
\end{split}
\end{equation}
\end{lemma}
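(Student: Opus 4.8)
The plan is to commute the Regge--Wheeler/Zerilli equation \eqref{RW-Solu}, \eqref{Zerilli-Solu} with each first order operator in $\mathbb{D}=\{r\pv,(1-\mu)^{-1}\pu,r\nablaslash\}$ and then rerun the $r^p$ multiplier argument of Lemma \ref{lema-rp-hierarchy-0} on the commuted variable, treating the commutator as a source term. I would concentrate on the most delicate case, the outgoing commutator $\So=r\pv\Solu$ of \eqref{def-So1}. The angular commutator $r\nablaslash\Solu$ is essentially harmless: the rotations $\Omega_i$ are Killing and commute with the radial operator $\pu\pv-\eta\laplacianslash+V$, and since $|r\nablaslash\Solu|^2\sim|\Omega\Solu|^2$, its estimate follows from Lemma \ref{lema-rp-hierarchy-0} applied to $\Omega\Solu$. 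The ingoing commutator $(1-\mu)^{-1}\pu\Solu$ is the most benign in $\{r\ge R\}$, where $\eta\sim 1$ and the equation lets one express $\pv\big((1-\mu)^{-1}\pu\Solu\big)$ through $\laplacianslash\Solu$ and $\Solu$, reducing its $r^p$ estimate to those for $\Solu$ and $\Omega^l\Solu$.

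First I would derive the equation satisfied by $\So$. Applying $r\pv$ to $\pu\pv\Solu=\eta\laplacianslash\Solu-V\Solu$ and using $\pv r=\tfrac{\eta}{2}$, $\pv\eta=\tfrac{M\eta}{r^{2}}$, one finds that $\So$ solves an equation of the same principal form,
\begin{equation*}
\pu\pv\So-\eta\laplacianslash\So+V\So=\mathcal{E},
\end{equation*}
where the source $\mathcal{E}$ collects the commutator terms. The leading term of $\mathcal{E}$ is the angular one, proportional to $\laplacianslash\Solu$ with an $O(1)$ coefficient that does \emph{not} decay in $r$; the remaining contributions are lower order in $r$ and involve only $\pv\So$, $\So$, $\pv\Solu$ and $\Solu$ (the $\pv^2\Solu$ piece being rewritten through $\pv\So$ and $\So$). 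I would then multiply by $\eta^{-k}r^{p}\pv\So$ with $k=4$, as in Lemma \ref{lema-rp-hierarchy-0}, and integrate over $\D_{\tau_1}^{\tau_2}$, producing on the left exactly the $\So$-versions of the boundary and bulk quantities of \eqref{rp-1-D} and on the right the spacetime integral of $\mathcal{E}\cdot\eta^{-k}r^{p}\pv\So$.

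The main obstacle is the leading angular error $\doubleint_{\D_{\tau_1}^{\tau_2}}-2r^{p}\laplacianslash\Solu\,\pv\So$ of \eqref{leading-laplacian-1st-order-rp}. A direct Cauchy--Schwarz would force one to absorb a bulk term $\doubleint r^{p-1}|\nablaslash\Solu|^2$ into the zeroth order output $K_{p-1}(\Solu)$ of \eqref{def-K-p}; but the coefficient of the angular term there is $\tfrac{2-p}{2}$, which degenerates precisely at the endpoint $p=2$. Following the idea of \cite{rp-ASF-Mos}, the remedy is to integrate by parts twice --- once in $v$ to move $\pv$ off $\So=r\pv\Solu$, and once on the sphere (using self-adjointness of $\mathring{\laplacianslash}_{S^2}$ and its commutation with $\pv$) to convert $\laplacianslash\Solu$ into $|\nablaslash\Solu|^2$. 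The weight derivatives $\pv r^{p}=\tfrac{p}{2}\eta\, r^{p-1}$ generated along the way combine so that the resulting angular bulk term \eqref{leading-angular-1st-order-rp} carries the factor $(2-p)^2$; it therefore vanishes at $p=2$ and, for $0<p<2$, is dominated by $(2-p)\doubleint r^{p-1}|\nablaslash\Solu|^2\lesssim K_{p-1}(\Solu)$ from Lemma \ref{lema-rp-hierarchy-0}. The boundary contributions produced by these integrations by parts on $\N_{\tau_i}$, $\scri$ and $\{r=R\}$ fall into the corresponding terms of \eqref{rp-1-D}, the $\{r=R\}$ flux being handled as in the zeroth order case by the mean value theorem together with the non-degenerate estimate (Corollary \ref{nondeg-Integrated decay estimate}).

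Finally I would dispose of the genuinely lower order pieces of $\mathcal{E}$ by Cauchy--Schwarz with a small parameter, absorbing the $\pv\So$ and $r^{-1}\So$ contributions into the positive bulk $\doubleint p\,r^{p-1}\big(|\pv\So|^2+\tfrac{M}{r}\tfrac{|\So|^2}{r^2}\big)$ on the left, and estimating the $\pv\Solu$ and $\Solu$ contributions by $K_{p-1}(\Solu)$ and $S_p(\Solu)$ from the $j=0$ estimate. Because the double integration by parts turns the angular error into terms carrying up to two angular derivatives of $\Solu$, closing the argument requires Lemma \ref{lema-rp-hierarchy-0} commuted with the rotations $\Omega^l$, $l\le 2$; this is the origin of the $\sum_{l\le2}|\pv\Omega^l\Solu|^2$ flux on the right-hand side of \eqref{rp-1-D}. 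Summing the $j=0$ estimate with the three $j=1$ estimates for $r\pv\Solu$, $(1-\mu)^{-1}\pu\Solu$ and $r\nablaslash\Solu$ then yields \eqref{rp-1-D}.
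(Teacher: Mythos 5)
Your proposal is correct and follows essentially the same route as the paper's proof: you commute with $r\pv$, apply the multiplier $\eta^{-k}r^p\pv\So$ with $k=4$, and recover the endpoint $p=2$ by integrating the leading angular error \eqref{leading-laplacian-1st-order-rp} by parts twice to gain the factor $(2-p)^2$, exactly as in \eqref{eq-A-2}--\eqref{esimate-A-2-main}, closing with the zeroth order estimate commuted with $\Omega^l$, $l\le 2$. The only cosmetic difference is that for the ingoing derivative you read $\pv\pu\Solu$ off the equation directly, whereas the paper commutes with $\pu$ via \eqref{commutator-u-omega} and reruns the energy inequality; both reduce to the same $K_{p-1}(\Omega^j\Solu)$, $j\le2$, contributions.
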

\begin{remark}
In the first order $r^p$ weighted energy inequality of \cite{S-13} (in the proof of Proposition 5.6), the $p$ has only range $(0,2)$. We improve the rang of $p$ to be $(0,2]$ in the above Lemma. Based on this, we can further improve the decay estimate for time derivative as $t^{-2}$, see subsection \ref{Improved interior decay}. This could be compared with the decay of time derivative $t^{-2+\delta}$ in \cite{S-13}.
\end{remark}

\begin{proof}
As our proof is the same for both Regge-Wheeler and Zerilli case, 
we take the Zerilli case for example. We begin with commuting the equation with $r\pv$.
For any smooth function $\varphi\in C^{\infty}(\M)$, we have the commuting identity,
\begin{equation}\label{commutator}
\begin{split}
[\L_Z, r\pv]\varphi &=  \eta\pu\pv\varphi -\eta\pv^2\varphi -2\eta \left(1-\frac{3M}{r} \right) \laplacianslash\varphi  \\
 &\quad - \eta\frac{2M}{r^2}\pv\varphi -r\pv V^Z\varphi,
\end{split}
\end{equation}
where $\L_Z$ is defined as in \eqref{Zerilli-Solu}. 
In view of  the Zerilli equations \eqref{Zerilli-Solu} and commuting identity\eqref{commutator}, we have
\begin{equation}\label{commutator-S1}
\begin{split}
\L_Z\So &=  -\eta\pv^2\Solu+\{\eta^2-2\eta(1-\frac{3M}{r})\}\laplacianslash\Solu  \\
&\quad - \eta\frac{2M}{r^2}\pv\Solu - (V^Z + r\pv V^Z)\Solu.
\end{split}
\end{equation}
It turns out that the first term on the right hand side of \eqref{commutator-S1} has a good sign. Namely, 
\begin{equation}\label{good sing term-Solu1}
-\eta\pv^2\Solu= - \frac{\eta}{r}\pv\So + \frac{\eta^2}{r}\pv\Solu,
\end{equation}
and $- \frac{\eta}{r}\pv\So$ has a good sign. Namely, we
multiply $2r^p(1-\mu)^{-k}\pv\So$ on both sides of  \eqref{commutator-S1}, and integrate on the spacetime region $\D_{\tau_1}^{\tau_2}$,  to yield that,
\begin{equation}\label{rp-ineq-first-order}
\begin{split}
&\int_{\N_{\tau_2}}   r^p|\pv\So|^2 \di v\di \sigma_{S^2}  +K_{p-1}(\So) + S_p(\So)\\
\lesssim & \int_{\N_{\tau_1}}   r^p|\pv\So|^2 \di v\di \sigma_{S^2} +\doubleint_{\D_{\tau_1}^{\tau_2}} -2 \eta^{-k+1} r^{p-1} |\pv\So|^2 \\
& \quad \quad + A_1 + A_2 +A_3 +\text{boundary term on} \{r=R\},
\end{split}
\end{equation}
where $A_i, i=1,\cdots, 3$ are defined in an obvious way,
\begin{align*}
 A_1 + A_2 +A_3 {}&{}\doteq \doubleint_{\D_{\tau_1}^{\tau_2}}  \left(-\frac{4M}{r}\eta^{-k+1} + 2 \eta^{-k+2} \right)r^{p-1} \pv\Solu\pv\So \\
 {}&{} +\doubleint_{\D_{\tau_1}^{\tau_2}} 2\left(\eta^2-2\eta(1-\frac{3M}{r})\right)\eta^{-k} r^{p}\laplacianslash\Solu \pv\So  \\
 {}&{} - \doubleint_{\D_{\tau_1}^{\tau_2}}  (V^Z + r\pv V^Z) \eta^{-k} r^{p+1} \frac{\Solu}{r} \pv\So.
\end{align*}
The bulk term in the second line of \eqref{rp-ineq-first-order} has a good sign. It could be moved to the right hand side of \eqref{rp-ineq-first-order}, and contributes to integrated decay estimate.

Next, we will estimate $A_1,\cdots A_3$ one by one.

For $A_1$, an application of Cauchy-Schwarz inequality yields
\begin{equation}\label{estimate-A-1}
| A_1 |\leq \frac{1}{c} \doubleint_{\D_{\tau_1}^{\tau_2}}   r^{p-1}|\pv\Solu|^2 + c\doubleint_{\D_{\tau_1}^{\tau_2}}   r^{p-1} |\pv\So|^2
\end{equation}
for some universal constant $c$. We choose the constant $c$ to be small enough so that the second term on the right hand side of \eqref{estimate-A-1} can be absorbed by $K_{p-1}(\So)$ which is on the left hand side of \eqref{rp-ineq-first-order}. And the first term $\doubleint_{\D_{\tau_1}^{\tau_2}}   r^{p-1}|\pv\Solu|^2 $ could be controlled by $K_{p-1}(\Solu)$, which is of lower order derivative.

For $A_2$, we rewrite it as 
\begin{equation}\label{eq-A-2}
 A_2 =\doubleint_{\D_{\tau_1}^{\tau_2}} -2r^{p}\laplacianslash\Solu \pv\So + f(r) \laplacianslash\Solu \pv\So,
\end{equation}
with $ |\partial^j_r f|\lesssim r^{p-1-j}, \, j \in \Naturals$.
Integrating by part twice, and using Cauchy-Schwarz inequality, we have
\begin{equation}\label{A-2}
\begin{split}
 |A_2| &\lesssim \doubleint_{\D_{\tau_1}^{\tau_2}} 2r^{p-1} |\pv\Omega\Solu|^2 + r^{p-3}  |\pv\Omega\Solu|^2 + (2-p)^2 r^{p-1} |\nablaslash\Solu|^2  \\
 &+\doubleint_{\D_{\tau_1}^{\tau_2}} r^{p-3}  |\nablaslash\Solu|^2 +  \int_{\scri} r^p \left(  |\pv \Omega \Solu|^2 +  |\nablaslash \Solu|^2\right).
 \end{split}
\end{equation}
Here we omit the boundary terms on $\{r=R\}$.
In \eqref{A-2}, $\doubleint_{\D_{\tau_1}^{\tau_2}} ( 2r^{p-1} + r^{p-3} )|\pv\Omega\Solu|^2 $ could be bounded by $K_{p-1}(\Omega\Solu)$, while 
\begin{equation}\label{esimate-A-2-main}
\doubleint_{\D_{\tau_1}^{\tau_2}} (2-p)^2 r^{p-1} |\nablaslash\Solu|^2 \leq C \cdot K_{p-1}(\Solu), \quad \text{for some} \quad C>2(2-p)
\end{equation}
Note that this holds for all $0<p\leq2$. In particular, \eqref{esimate-A-2-main} holds when $p=2$, while the estimate in the proof of Proposition 5.6 in \cite{S-13} does not hold for $p=2$.
Furthermore, noting that $p\leq 2$, and then $3-p >0$, we have 
\begin{equation*}
\doubleint_{\D_{\tau_1}^{\tau_2}} r^{p-3}  |\nablaslash\Solu|^2 \lesssim \doubleint_{\D_{\tau_1}^{\tau_2}} (3-p) r^{p-3}  |\nablaslash\Solu|^2 \lesssim K_{p-2}(\Solu).
\end{equation*}
Finally, the last term in \eqref{A-2} could be bounded by $S_p(\Solu)$ and $S_p(\Omega\Solu)$.

Similar for $A_3$, noticing that $|\dr^j (r^{p}V^Z)| \lesssim r^{p-3-j}, \, j \in \Naturals$, we integrate by parts, and use Cauchy-Schwarz inequality. Thus
\begin{equation}\label{A-3}
\begin{split}
 &|A_3| \lesssim  \doubleint_{\D_{\tau_1}^{\tau_2}} \left(r^{p-1} +r^{p-3} \right)  |\pv\Solu|^2  + r^{p-3}   \frac{\Solu^2}{r^2}  \\
 +& \doubleint_{\D_{\tau_1}^{\tau_2}}    (2-p)^2 r^{p-1} \frac{\Solu^2}{r^2}+ \int_{\scri}  r^p \left( |\pv \Solu|^2 + \frac{\Solu^2}{r^2} \right).
\end{split}
\end{equation}
In the same way, the bulk integral in the first line of \eqref{A-3} 
could be bounded by $K_{p-1}(\Solu)$.  
And 
\begin{equation}\label{esimate-A-3-main}
\doubleint_{\D_{\tau_1}^{\tau_2}} (2-p)^2 r^{p-1}  \frac{\Solu^2}{r^2} \leq C \cdot K_{p-1}(\Solu), \quad \text{for some} \quad C>2(2-p).
\end{equation}
 Note that this holds for all $0<p\leq2$. As in $A_2$, the last term in \eqref{A-3} could be bounded by $S_p(\Solu)$.

As a summary, we have
\begin{equation}\label{rp-ineq-first-order-sum-So}
\begin{split}
&\int_{\N_{\tau_2}}   r^p|\pv\So|^2 \di v\di \sigma_{S^2}  +K_{p-1}(\So) + S_p(\So)\\
\lesssim & \int_{\N_{\tau_1}}   r^p|\pv\So|^2 \di v\di \sigma_{S^2}  + K_{p-1}(\Solu)  + K_{p-2}(\Solu)+  K_{p-1} (\Omega\Solu) \\
& \,\,\,\,\,\,+S_p(\Solu) + S_p(\Omega\Solu)  +\text{boundary terms on} \,\, \{r=R\}.
\end{split}
\end{equation}
Applying Lemma \ref{lema-rp-hierarchy-0} to estimate $K_{p-1}(\Solu)  + K_{p-2}(\Solu)+  K_{p-1} (\Omega\Solu) + S_p(\Solu)$, we get
\begin{equation}\label{rp-1}
\begin{split}
 \sum_{j\leq1} \int_{\N_{\tau_2}} r^p | \pv \left( (r\pv)^j\Solu \right) |^2 \di v \di \sigma_{S^2} + \sum_{j,l\leq1} \int_{\scri}r^p \left( |r^{j-1}\pv^j\Omega^l\Solu|^2 +|\nablaslash\Solu|^2  \right)  {}&{} \\
+ \sum_{j\leq1}  \doubleint_{\D_{\tau_1}^{\tau_2}} pr^{p-1} \left( |\pv(r\pv)^j\Solu|^2 + \frac{M}{r}  \frac{|(r\pv)^j\Solu|^2}{r^2} \right)  \di u\di v\di \sigma_{S^2} {}&{}  \\
+\sum_{j\leq1} \doubleint_{\D_{\tau_1}^{\tau_2}}  (2-p)r^{p-1}  \left( |\nablaslash(r\pv)^j\Solu|^2 +  \frac{|(r\pv)^j \Solu|^2}{r^2} \right) \di u\di v\di \sigma_{S^2} {}&{}  \\
\lesssim    \int_{\N_{\tau_1}}  r^p \left( \sum_{j\leq1}  | \pv  (r\pv)^j\Solu |^2  +\sum_{l\leq 1} | \pv \Omega^l\Solu |^2    \right)  \di v \di \sigma_{S^2} {}&{}\\
+ \sum_{j,l\leq2}   \int_{\{r=R \}}  |\pv^j\nablaslash^l\Solu|^2 \di t \di \sigma_{S^2}.{}&{}  
\end{split}
\end{equation}

Next, we commute the equation \eqref{RW-Solu} or \eqref{Zerilli-Solu} with $\pu$ and $\Omega$,
\begin{equation}\label{commutator-u-omega}
[\L_Z, \pu] = 2\frac{\eta}{r}(1-\frac{3M}{r})\laplacianslash -\pu V^Z.
\end{equation}
We know that $ |\pu V^Z| \lesssim \frac{M}{r^4}.$ Thus making use of Cauchy Schwarz inequality, we have the energy inequality, 
\begin{equation}\label{rp-pu-energy-inequality}
\begin{split}
\int_{\N_{\tau_2}}  r^p | \pv \pu \Solu |^2 \di v \di \sigma_{S^2}  &\\
+\doubleint_{\D_{\tau_1}^{\tau_2}}  p r^{p-1} \left( |\pv \pu \Solu |^2 + \frac{M}{r}  \frac{ |\pu \Solu|^2}{r^2} \right)  \di u\di v\di \sigma_{S^2} &  \\
+\doubleint_{\D_{\tau_1}^{\tau_2}}   (2-p) r^{p-1}  \left( |\nablaslash \pu\Solu|^2 +  \frac{ |\pu \Solu|^2}{r^2} \right) \di u\di v\di \sigma_{S^2} &  \\
\lesssim    \int_{\N_{\tau_1}}   r^p | \pv \pu \Solu |^2 \di v \di \sigma_{S^2} + \text{boundary term on}\, \{r=R\}. & \\
 + \doubleint_{\D_{\tau_1}^{\tau_2}}  \epsilon r^{p-1} |\pv \pu \Solu |^2 +\frac{1}{\epsilon} r^{p-1}\left( |\laplacianslash\Solu|^2 +  \frac{ |\Solu|^2}{r^2} \frac{M^2}{r^4} \right) \di u\di v\di \sigma_{S^2}. & 
\end{split}
\end{equation}
We chose $\epsilon$ to be small enough, then $\doubleint_{\D_{\tau_1}^{\tau_2}}  \epsilon r^{p-1} |\pv \pu \Solu |^2$ could be absorbed by the second line of \eqref{rp-pu-energy-inequality}. Moreover, since $ |\laplacianslash\Solu|^2 \lesssim  \sum_{j\leq 2} r^{-2}|\Omega^j\Solu|^2,$  we have $$\doubleint_{\D_{\tau_1}^{\tau_2}}  r^{p-1}\left( |\laplacianslash\Solu|^2 +  \frac{ |\Solu|^2}{r^2} \frac{M^2}{r^4} \right) \di u\di v\di \sigma_{S^2} \lesssim \sum_{j\leq 2} K_{p-1}(\Omega^j\Solu).$$
Combining with Lemma \ref{lema-rp-hierarchy-0}, we then have
\begin{equation}\label{rp-pu-energy-inequality-sum}
\begin{split}
&\int_{\N_{\tau_2}}  r^p | \pv \pu \Solu |^2 \di v \di \sigma_{S^2}  \\
+ &\doubleint_{\D_{\tau_1}^{\tau_2}}  p r^{p-1} \left( |\pv \pu \Solu |^2 + \frac{M}{r}  \frac{ |\pu \Solu|^2}{r^2} \right) \di u\di v\di \sigma_{S^2}   \\
+ &\doubleint_{\D_{\tau_1}^{\tau_2}}   (2-p) r^{p-1}  \left( |\nablaslash \pu\Solu|^2 +  \frac{ |\pu \Solu|^2}{r^2} \right) \di u\di v\di \sigma_{S^2}   \\
\lesssim  &  \int_{\N_{\tau_1}}   r^p \left( | \pv \pu \Solu |^2 + \sum_{j\leq 2} | \pv \Omega^j\Solu |^2 \right)\di v \di \sigma_{S^2} \\
 + &\text{boundary term on}\, \{r=R\}. 
\end{split}
\end{equation}

Finally, we commute the equation with $\Omega$ which are killing vector fields, and hence $ [\L_Z, \Omega] = 0$. The statement follows straightforwardly.

\end{proof}

The general high order $r^p$ integrated decay estimate follows by a simple induction.
\begin{corollary}[$r^p$ High Order Integrated Decay Estimate]\label{rp-hierarchy-n}
Let $\Solu$ be a solution of the Regge-Wheeler \eqref{RW-Solu} or Zerilli equations \eqref{Zerilli-Solu} in the region 
$\D_{\tau_1}^{\tau_2},$ and define the weighted derivatives $\mathbb{D}=\{r\pv, (1-\mu)^{-1}\pu, r\nablaslash   \}$.
For $0< p\leq 2$,  there is the integrated decay estimate for all $n\in \Naturals,$
\begin{equation}\label{rp-psi-i-1}
\begin{split}
\int_{\N_{\tau_2}}  \sum_{j=0}^{n} r^p|\pv\mathbb{D}^j\Solu|^2 \di v\di \sigma_{S^2} + {}&{} \\
\doubleint_{\D_{\tau_1}^{\tau_2}} \sum_{j=0}^{n}  r^{p-1} \{|\pv\mathbb{D}^j\Solu|^2  +  (2-p) \left(|\nablaslash\mathbb{D}^j\Solu|^2  +  \frac{|\mathbb{D}^j\Solu|^2 }{r^2} \right) + \frac{6M}{r} \frac{|\mathbb{D}^j\Solu|^2 }{r^2}\} \di u\di v\di \sigma_{S^2}  {}&{}  \\
\lesssim   \int_{\N_{\tau_1}}   \sum_{j=0}^{n} r^p |\pv\mathbb{D}^j\Solu|^2 \di v\di \sigma_{S^2} 
+\int_{r=R} \sum_{j=0}^{n} \{|\pv\mathbb{D}^j\Solu|^2+ |\nablaslash\mathbb{D}^j\Solu|^2 +  |\mathbb{D}^j\Solu|^2 \}\di t \di \sigma_{S^2}.{}&{} 
\end{split}
\end{equation}
\end{corollary}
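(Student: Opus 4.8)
The plan is to prove \eqref{rp-psi-i-1} by induction on $n$, the base cases $n=0$ and $n=1$ being supplied verbatim by Lemma \ref{lema-rp-hierarchy-0} and Lemma \ref{lema-rp-hierarchy-1}. The structural fact that drives the induction is that the rotational Killing fields $\Omega$ commute with the Regge--Wheeler and Zerilli operators (as recorded just before \eqref{commutator-u-omega}) and also with $r\pv$ and $(1-\mu)^{-1}\pu$, whose coefficients depend only on $(t,r)$ and which act in the $(u,v)$ variables. Hence $\Omega^l\Solu$ is again a solution of \eqref{RW-Solu} or \eqref{Zerilli-Solu} for every $l$, one has $\Omega^l\mathbb{D}^j\Solu=\mathbb{D}^j(\Omega^l\Solu)$, and on each sphere $S^2(t,r)$ the weight $r\nablaslash\in\mathbb{D}$ is interchangeable with $\Omega$ in $L^2$ by \eqref{angular-int}. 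I will therefore apply the inductive hypothesis simultaneously to $\Solu$ and to all $\Omega^l\Solu$, which is what lets every angular derivative generated during a commutation be reabsorbed into a lower-order member of the hierarchy.

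For the inductive step I assume \eqref{rp-psi-i-1} at order $n$ for all solutions. Every word of length $n+1$ in $\mathbb{D}$ reads $XW$ with $X\in\{r\pv,(1-\mu)^{-1}\pu,r\nablaslash\}$ and $W$ a word of length $n$. When $X=r\nablaslash$ the quantity is controlled in $L^2$, up to bounded factors, by $W(\Omega\Solu)$, and $\Omega\Solu$ is a solution, so the order-$n$ estimate closes this case immediately. It thus remains to push one further copy of $r\pv$ and of $(1-\mu)^{-1}\pu$ through $W$, i.e. to repeat the two commutator computations \eqref{commutator} and \eqref{commutator-u-omega} with $W\Solu$ in place of $\Solu$.

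Commuting with $r\pv$ and re-running \eqref{commutator-S1}--\eqref{good sing term-Solu1} with $W\Solu$ in the role of $\Solu$ again produces, as in \eqref{rp-ineq-first-order}, a favourably-signed bulk term controlling $|\pv(r\pv W\Solu)|^2$ that comes from the principal $\pv^2$ contribution; the remaining error integrals $A_1,A_2,A_3$ are treated exactly as in \eqref{estimate-A-1}--\eqref{A-3}. Here $A_1$ is absorbed by Cauchy--Schwarz into the coercive term on the left and into $K_{p-1}(W\Solu)$ on the right, and $A_3$ is handled with $|\dr^j(r^pV)|\lesssim r^{p-3-j}$ and \eqref{esimate-A-3-main}. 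The delicate term is $A_2$, which carries the angular Laplacian $\laplacianslash(W\Solu)$: following \eqref{eq-A-2}--\eqref{esimate-A-2-main}, two angular integrations by parts trade it for the bulk term $(2-p)^2r^{p-1}|\nablaslash W\Solu|^2$ together with terms in $\pv\Omega W\Solu$ and fluxes on $\scri$. Since $(2-p)^2r^{p-1}|\nablaslash W\Solu|^2=2(2-p)\cdot\frac{2-p}{2}r^{p-1}|\nablaslash W\Solu|^2$ is exactly $2(2-p)$ times the degenerate angular part of $K_{p-1}(W\Solu)$, it is dominated by $C\,K_{p-1}(W\Solu)$ and vanishes at $p=2$; the remaining pieces reduce to $K_{p-1}(W(\Omega\Solu))$, $K_{p-2}(W\Solu)$, $S_p(W\Solu)$ and $S_p(W(\Omega\Solu))$. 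As these are all of the form (word of length $\le n$) applied to the solutions $\Solu$ or $\Omega\Solu$, the inductive hypothesis bounds each of them by the data. The commutation with $(1-\mu)^{-1}\pu$ proceeds identically through \eqref{commutator-u-omega}: the commutator contributes $\laplacianslash(W\Solu)$ and a potential term with $|\pu V|\lesssim Mr^{-4}$, and after a Cauchy--Schwarz split with a small $\epsilon$ as in \eqref{rp-pu-energy-inequality} the angular Laplacian is absorbed, via $|\laplacianslash\,\cdot\,|\lesssim r^{-2}\sum_{j\le2}|\Omega^j\,\cdot\,|$, into the non-degenerate mass term $\frac{6pM}{r}r^{p-1}|\,\cdot\,|^2/r^2$ that Lemma \ref{lema-rp-hierarchy-0} furnishes (cf. Remark \ref{rk-0order-rp}), again reducing everything to order $\le n$ quantities on $\Omega^j\Solu$. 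Summing over the three choices of $X$ and over all words of length $\le n+1$ closes the induction.

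The main obstacle, inherited from the first-order case, is the single angular term $A_2$ at the endpoint $p=2$: the angular bulk coefficient $\frac{2-p}{2}$ of the hierarchy degenerates as $p\to2$, so any error of size $r^{p-1}|\nablaslash\,\cdot\,|^2$ with a $p$-independent coefficient would be uncontrollable there, and only the double angular integration by parts, which manufactures the matching prefactor $(2-p)^2$, keeps \eqref{esimate-A-2-main} valid up to and including $p=2$. The bulk of the remaining work is the bookkeeping needed to confirm that, apart from this term, every error produced at order $n+1$ is either of order $\le n$ or reabsorbed through $\Omega$-commutation into a solution $\Omega^l\Solu$, so that the induction proceeds uniformly for all $0<p\le2$.
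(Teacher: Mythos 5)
Your proposal is correct and follows essentially the same route as the paper, which proves this corollary simply by asserting that it ``follows by a simple induction'' from Lemma \ref{lema-rp-hierarchy-0} and Lemma \ref{lema-rp-hierarchy-1}; your induction on the length of the word in $\mathbb{D}$, with the commutators \eqref{commutator} and \eqref{commutator-u-omega} re-run on lower-order quantities, the $\Omega$-commutation used to recycle angular derivatives into solutions, and the $(2-p)^2$ gain from \eqref{eq-A-2}--\eqref{esimate-A-2-main} preserving the endpoint $p=2$, is precisely the intended argument spelled out in detail. In fact your write-up is more explicit than the paper's, but it introduces no new idea beyond the machinery of Lemmas \ref{lema-rp-hierarchy-0} and \ref{lema-rp-hierarchy-1}.
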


\subsection{Improved decay estimate}\label{Improved interior decay}
Due to the $r^p$ first order integrated decay estimate (Lemma \ref{lema-rp-hierarchy-1}), we could improve the decay of first order energy.
\begin{corollary}[Improved Pointwise Decay]\label{Improved interior first order energy decay}
Let $\solu$ be a solution of Regge-Wheeler \eqref{RW} or Zerilli equations \eqref{Zerilli}, with initial data on $\Sigma^{\prime}_{\tau_0}\cup \N_{\tau_0}$ satisfying
\begin{equation}\label{initial data-t}
\begin{split}
& \int_{\N_{\tau_0}}  \left( \sum_{k\leq1,j\leq2} r^{2} |T^k  (r\pv)^j\solu|^2 + \sum_{k\leq4,l\leq 2} r^{2} |T^k\Omega^l \pv(r\solu)|^2 \right) \di v\di \sigma_{S^2} \\
 +& \int_{\Sigma^{\prime}_{\tau_0}\cup \N_{\tau_0}}   \sum\limits_{k\leq4,l\leq2} P^N_\mu(T^k\Omega^l \solu) n^\mu + \sum\limits_{k\leq5} P^N_\mu(T^k\solu) n^\mu<\infty.
\end{split}
\end{equation}
Then there is a constant $I$ depending on the initial data \eqref{initial data-t},
\begin{equation}\label{improved-nondeg-Integrated decay estimate-t}
\int_{\Sigma^{\prime}_\tau \cup \N_\tau}  P^N_\mu(T\solu) n^\mu  \lesssim  \frac{I}{\tau^{4}},
\end{equation}
where $\Sigma^{\prime}_\tau=\Sigma_{\tau} \cap \{r\leq R\}, \,\, \N_{\tau}=\{\M| u=\tau-R^\ast, v \geq v_0=\tau_0+R^\ast \} $.
\end{corollary}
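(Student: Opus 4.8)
The plan is to run the argument of Theorem~\ref{energy decay} one level higher, replacing the field $\solu$ by its time derivative $T\solu$ (with $T=\dt$) and the zeroth-order hierarchy Lemma~\ref{lema-rp-hierarchy-0} by the first-order hierarchy Lemma~\ref{lema-rp-hierarchy-1}. First I would commute the Regge-Wheeler (or Zerilli) equation with $T$. Since $T$ is Killing for $r>r_{NH}$, $T\solu$ again solves the equation in the exterior (equivalently $T\Solu$ solves \eqref{RW-Solu}/\eqref{Zerilli-Solu}), while near the horizon the commutator is absorbed by the redshift exactly as in Corollary~\ref{coro-nondeg-Integrated decay estimate-high-order}; this is precisely what forces the data norm \eqref{initial data-t} to carry the extra $T$- and $\Omega$-derivatives. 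Consequently the whole ILED and $r^p$ machinery applies to $T\solu$.

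Next I would apply the non-degenerate integrated decay estimate (Corollary~\ref{nondeg-Integrated decay estimate}) to $T\solu$ on a dyadic slab $\D_{\tau_1}^{\tau_2}$, bounding $\int_{\tau_1}^{\tau_2}\!\int_{\Sigma'_\tau\cup\N_\tau}P^N_\mu(T\solu)n^\mu$ by a spacetime bulk term plus fluxes at $\tau_1$. The crucial point is to dominate that bulk, which involves $|\pv(rT\solu)|^2$ and angular terms, by the first-order hierarchy. Using $rT\solu=T\Solu$ and the equation, $\pv(rT\solu)=T\pv\Solu=\pu\pv\Solu+\pv^2\Solu$, where $\pu\pv\Solu=\eta\laplacianslash\Solu-V\Solu$ is lower order in $r$ and $\pv^2\Solu=r^{-1}\pv\So-\tfrac12\eta r^{-2}\So$ with $\So=r\pv\Solu$. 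Thus the $T\solu$-flux on $\N_\tau$ is controlled by the first-order fluxes $\int_{\N_\tau}r^p|\pv\mathbb{D}^j\Solu|^2$ of Lemma~\ref{lema-rp-hierarchy-1}. I would then invoke that lemma with $p=1$ to bound the bulk by these fluxes at $\tau_1$, and with $p=2$ to force the fluxes to decay along a dyadic sequence $\tau'_{j+1}=2\tau'_j$.

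The improvement from $\tau^{-2}$ to $\tau^{-4}$ comes from iterating the dyadic pigeonhole twice, exactly as in the chain \eqref{non-deg-integrated decay estimate-pigeon}--\eqref{non-deg-integrated decay estimate-pigeon-1}, but now feeding back the already-established lower-order decay $\int_{\Sigma'_\tau\cup\N_\tau}P^N_\mu(\solu)n^\mu\lesssim I/\tau^2$ and $\int_{\Sigma'_\tau\cup\N_\tau}P^N_\mu(\Omega^l\solu)n^\mu\lesssim I/\tau^2$ from Theorem~\ref{energy decay} in the role previously played by the uniform bound \eqref{uniform boundness-v}. One application of the first-order hierarchy-plus-pigeonhole turns the bounded top-level first-order flux into $\tau^{-1}$ decay of the intermediate flux; a second application, combined with the $\tau^{-2}$ lower-order energy, yields the $\tau^{-4}$ rate for $\int_{\Sigma'_\tau\cup\N_\tau}P^N_\mu(T\solu)n^\mu$. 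Throughout, the $\{r=R\}$ boundary terms are reabsorbed via the mean-value theorem in $r^\ast$ together with the non-degenerate ILED, and the $\scri$ fluxes are kept positive using $\ell\geq2$, as in Theorem~\ref{energy decay}.

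The main obstacle I anticipate is the endpoint $p=2$ in the first-order hierarchy. For $p<2$ the scheme would only deliver $\tau^{-4+\delta}$, the $\delta$-loss of \cite{S-13}; the sharp $\tau^{-4}$ requires Lemma~\ref{lema-rp-hierarchy-1} precisely at $p=2$, where the angular coefficient $2-p$ in the bulk degenerates and must be recovered from the $(2-p)^2$ gain produced by the double integration by parts in \eqref{eq-A-2}--\eqref{esimate-A-2-main}. The remaining difficulties are technical bookkeeping: propagating the horizon commutator errors through the redshift estimate so they close against \eqref{initial data-t}, and tracking the $T$- and $\Omega$-commuted fluxes at each level so that the two pigeonhole iterations actually match the powers of $\tau$.
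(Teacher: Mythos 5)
Your proposal is correct and follows essentially the same route as the paper's proof: expressing $\pv T\Solu$ through the equation in terms of $\pv\So$ (with $\So=r\pv\Solu$), running the first-order hierarchy of Lemma \ref{lema-rp-hierarchy-1} up to the endpoint $p=2$ (equivalently the paper's extended hierarchy \eqref{rp-psi-1-1} for $2<p\leq 4$) with dyadic pigeonholes while feeding back the $\Omega$- and $T$-commuted $\tau^{-2}$ decay from Theorem \ref{energy decay}, and then finishing with the zero-order estimate \eqref{ineq-rp-0} applied to $T\Solu$, exactly as in \eqref{rp-pv-T-Solu-p4}--\eqref{improved-nondeg-Integrated decay estimate-T-2}. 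One immaterial slip: since $T=\partial_t$ is Killing on all of Schwarzschild, $T\solu$ solves the same equation globally and there is no horizon commutator to absorb by the redshift (that device is reserved for $Y=(1-\mu)^{-1}\pu$ as in Corollary \ref{coro-nondeg-Integrated decay estimate-high-order}); the commuted derivatives in \eqref{initial data-t} are needed for the fed-back lower-order decay, not for a $T$-commutator.
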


\begin{proof}
As our proof is the same for both Regge-Wheeler and Zerilli case, 
we take the Zerilli case for example.
Let
\begin{equation}\label{def-so1}
\psifirst\doteq \pv\Solu.
\end{equation}
Recalling that $\Solu = r\solu$, $\So=r\psifirst$ in the first order $r^p$ weighted inequality \eqref{rp-1}, we make use of the zero order $r^p$ weighted estimate in Lemma \ref{lema-rp-hierarchy-0}, thus for $2< p \leq4$
\begin{equation}\label{rp-psi-1-1}
\begin{split}
\int_{\N_{\tau_2}}    r^p|\pv\psifirst|^2\di v\di \sigma_{S^2} +\doubleint_{\D_{\tau_1}^{\tau_2}} ( p-2 ) r^{p-1} \left( |\pv\psifirst|^2  +\frac{M}{r}  \frac{|\psifirst|^2}{r^2} \right)  \di u\di v\di \sigma_{S^2} {}&{} \\
+\doubleint_{\D_{\tau_1}^{\tau_2} }   (4-p)  r^{p-1} \left(|\nablaslash\psifirst|^2  +  \frac{|\psifirst|^2 }{r^2} \right) \di u\di v\di \sigma_{S^2}  {}&{}  \\
\lesssim  \int_{\N_{\tau_1}} \{ r^p |\pv\psifirst|^2+\sum_{j\leq 1} r^{p-2} |\pv\Omega^j\Solu|^2\} \di v\di \sigma_{S^2}
+\int_{\{r=R \}}  \sum_{j,l\leq 2} |\nablaslash^j\pv^l\Solu|^2 \di t\di \sigma_{S^2}. {}&{} 
\end{split}
\end{equation}
We are interested in the quantity $\pv T\Solu = \pv^2\Solu+ \pv\partial_u\Solu$.
In view of the Zerilli equations \eqref{Zerilli-Solu}, we have
\begin{equation}\label{pv-T-Solu}
|\pv T\Solu|^2 \lesssim |\pv\psifirst|^2+|\laplacianslash \Solu|^2 +\frac{|M|^2}{r^2} \frac{|\Solu|^2}{r^4}.
\end{equation}
For the second term $ |\laplacianslash \Solu|^2 \lesssim \frac{ |\nablaslash\Omega\Solu|^2}{r^2}$, we repeat the proof of Theorem \ref{energy decay} with $\Omega\Solu$ in the place of $\Solu$, thus
\begin{equation}\label{decay-omega-Solu}
\int_{\N_{\tau}} r^2 |\laplacianslash \Solu|^2\di v \di \sigma_{S^2} \lesssim \int_{\N_{\tau}} |\nablaslash\Omega\Solu|^2\di v \di \sigma_{S^2} \lesssim \int_{ \N_\tau}  P^N_\mu(\Omega\solu) n^\mu \lesssim  \frac{I}{\tau^2}.,
\end{equation}
where $I$ depends only on the initial data \eqref{initial data-t}.
Hence, we have
\begin{equation}\label{bound-r2-pv-T-N}
 \int_{\N_{\tau}} r^{2}|\pv T\Solu|^2 \di v\di \sigma_{S^2}  \lesssim  \int_{\N_{\tau}} \left( r^2 |\pv\psifirst|^2  + \frac{|M|^2}{r^2} \frac{|\Solu|^2}{r^2} \right) \di v\di \sigma_{S^2} +  \frac{I}{\tau^2}.
\end{equation}
Notice that,
\begin{equation}\label{p-p-2-Solu}
\doubleint_{\D_{\tau_1}^{\tau_2} }  r^{p-1} \frac{M^2}{r^2} \frac{|\Solu|^2}{r^4}  \di u \di v \di \sigma_{S^2} \\
\lesssim  \doubleint_{\D_{\tau_1}^{\tau_2} }  r^{p-2-1} \frac{M}{r} \frac{|\Solu|^2}{r^2}  \di u \di v \di \sigma_{S^2}.
\end{equation}
We shall use \eqref{rp-psi-1-1} for $2<p\leq 4$ and Lemma \ref{lema-rp-hierarchy-0} for $0<p-2\leq 2$, thus
\begin{equation}\label{rp-pv-T-Solu}
\begin{split}
&\doubleint_{\D_{\tau_1}^{\tau_2} }  r^{p-1}\left( |\pv\psifirst|^2 +\frac{|M|^2}{r^2} \frac{|\Solu|^2}{r^4} \right) \di u \di v \di \sigma_{S^2} \\
 \lesssim  &\int_{\N_{\tau_1}} \{ r^p |\pv\psifirst|^2+ \sum_{j\leq 1} r^{p-2} |\pv\Omega^j\Solu|^2\} \di v\di \sigma_{S^2} \\
+ &\text{ boundary term at}\,\, \{r=R\}.
\end{split}
\end{equation}
Next, we would apply the $r^p$ hierarchy estimate to improve the energy decay.
 Taking $p=4$ in \eqref{rp-psi-1-1} and $p=2$ in \eqref{ineq-rp-0} with $\Solu$ being replaed by $\Omega \Solu$, we apply the pigeon-hole principle. There exists a sequence $\{ \tau\}_{j \in \mathbb{N}}$ such with $\tau_{j+1} = 2\tau_j$,
\begin{equation}\label{rp-pv-T-Solu-p4}
\begin{split}
&\int_{\N_{\tau_j} } \{ r^3 |\pv\psifirst|^2+ \sum_{j\leq 1} r |\pv\Omega^j\Solu|^2\} \di v\di \sigma_{S^2}  \\
 \lesssim  & \frac{1}{\tau_j}   \int_{\N_{\tau_0} } \{ r^4 |\pv\psifirst|^2+ \sum_{j\leq 1} r^2 |\pv\Omega^j\Solu|^2\} \di v\di \sigma_{S^2}  \\
+ & \frac{1}{\tau_j}   \int_{\Sigma^{\prime}_{\tau_0} \cup \N_{\tau_0} } \sum_{k\leq 2, l \leq 1}  P^N_\mu(T^k \Omega^l \solu) n^\mu + \sum_{k \leq 3} P^T(T^k\solu).
\end{split}
\end{equation}
Furthermore, taking $p=3$ in \eqref{rp-psi-1-1} and $p=1$ in \eqref{ineq-rp-0}, we have
\begin{equation}\label{rp-pv-T-Solu-p3}
\begin{split}
&\doubleint_{\D_{\tau_j}^{\tau_{j+1}} }   \left( r^2 |\pv\psifirst|^2 +\frac{M^2}{r^2} \frac{|\Solu|^2}{r^2} \right) \di u \di v \di \sigma_{S^2} \\
 \lesssim  &\int_{\N_{\tau_i} } \{ r^3 |\pv\psifirst|^2+ \sum_{j\leq 1} r |\pv\Omega^j\Solu|^2\} \di v\di \sigma_{S^2}  \\
+ & \int_{\Sigma^{\prime}_{\tau_j} \cup \N_{\tau_j} } \sum_{k\leq 2, l \leq 1}  P^N_\mu(T^k \Omega^l \solu) n^\mu + \sum_{k \leq 3} P^T(T^k\solu).
\end{split}
\end{equation}
Viewing \eqref{rp-pv-T-Solu-p4} and \eqref{rp-pv-T-Solu-p3}, we have
\begin{equation}\label{rp-pv-T-Solu-p3-p4}
\begin{split}
&\int_{\tau_j}^{\tau_{j+1}}  \int_{ \N_{\tau} } \left( r^2 |\pv\psifirst|^2 +\frac{M^2}{r^2} \frac{|\Solu|^2}{r^2} \right) \di u \di v \di \sigma_{S^2} \\
 \lesssim  & \frac{1}{\tau_j}   \int_{\N_{\tau_0} } \{ r^4 |\pv\psifirst|^2+ \sum_{j\leq 1} r^2 |\pv\Omega^j\Solu|^2\} \di v\di \sigma_{S^2}  \\
 & \,\,\,\,\,\,\,\,\,\,\,\,\,\,\,\,  + \frac{1}{\tau_j}   \int_{\Sigma^{\prime}_{\tau_0} \cup \N_{\tau_0} } \sum_{k\leq 2, l \leq 1}  P^N_\mu(T^k \Omega^l \solu) n^\mu + \sum_{k \leq 3} P^T(T^k\solu)\\
 & \,\,\,\,\,\, \,\,\,\,\,\,\,\,\,\,\,\,\,\,\,\,\,\,\,\,  +\int_{\Sigma^{\prime}_{\tau_j} \cup \N_{\tau_j} } \sum_{k\leq 2, l \leq 1}  P^N_\mu(T^k \Omega^l \solu) n^\mu + \sum_{k \leq 3} P^T(T^k\solu).
\end{split}
\end{equation}
As in the proof of Theorem \ref{energy decay}, we make use of the uniform boundness of energy \eqref{uniform boundness-v} to estimate the last term in \eqref{rp-pv-T-Solu-p3-p4}. Therefore, in the same way, we have
\begin{equation}\label{rp-pv-T-Solu-p3-p4-sum}
\begin{split}
&\int_{\tau_j}^{\tau_{j+2}}  \di \tau  \int_{\N_{\tau} } \left( r^2 |\pv\psifirst|^2 +\frac{M^2}{r^2} \frac{|\Solu|^2}{r^2} \right) \di r^\ast \di \sigma_{S^2} \\
 \lesssim  & \frac{1}{\tau_j}   \int_{\N_{\tau_0} } \{ r^4 |\pv\psifirst|^2+ \sum_{j\leq 1} r^2 |\pv\Omega^j\Solu|^2\} \di v\di \sigma_{S^2}  \\
 & \,\,\,\,\,\,\,\,\,\,\,\,\,\,\,\,  + \frac{1}{\tau_j}   \int_{\Sigma^{\prime}_{\tau_0} \cup \N_{\tau_0} } \sum_{k\leq 3, l \leq 1}  P^N_\mu(T^k \Omega^l \solu) n^\mu + \sum_{k \leq 4} P^T(T^k\solu).
 \end{split}
\end{equation}
We again use the pigeon-hole principle to obtain 
\begin{equation}\label{rp-pv-T-Solu-decay-t2}
\begin{split}
& \int_{\N_{\tau} } \left( r^2 |\pv\psifirst|^2 +\frac{M^2}{r^2} \frac{|\Solu|^2}{r^2} \right) \di r^\ast \di \sigma_{S^2} \\
 \lesssim  & \frac{1}{\tau^2}  \int_{\N_{\tau_0}} \left( r^{4}|\pv\psifirst|^2+  \sum_{k\leq3,l\leq 1} r^{2} |T^k\Omega^l \pv\Solu|^2\right)  \di v\di \sigma_{S^2} \\
  + &\frac{1}{\tau^2} \int_{\Sigma^\prime_{\tau_0} \cup \N_{\tau_0}}   \sum\limits_{k\leq3,l\leq1} P^N_\mu(T^k\Omega^l \solu) n^\mu + \sum\limits_{k\leq4} P^T_\mu(T^k\solu) n^\mu.
 \end{split}
\end{equation}
Regarding \eqref{bound-r2-pv-T-N}, we have established
\begin{equation}\label{improved-nondeg-Integrated decay estimate-T-1}
\begin{split}
 &\quad \quad \quad \quad \quad  \int_{\N_{\tau}} r^{2}|\pv T\Solu|^2 \di v\di \sigma_{S^2} \\
\lesssim & \frac{1}{\tau^2}  \int_{\N_{\tau_0}} \left( r^{4}|\pv\psifirst|^2+  \sum_{k\leq3,l\leq 1} r^{2} |T^k\Omega^l \pv\Solu|^2\right)  \di v\di \sigma_{S^2} \\
  + &\frac{1}{\tau^2} \int_{\Sigma^\prime_{\tau_0} \cup \N_{\tau_0}}   \sum\limits_{k\leq3,l\leq1} P^N_\mu(T^k\Omega^l \solu) n^\mu + \sum\limits_{k\leq4} P^T_\mu(T^k\solu) n^\mu.
\end{split}
\end{equation}

Taking $p=2$ and $p=1$ in the $r^p$ weighted energy inequality \eqref{ineq-rp-0} with $\Solu$ being replaced by $T\Solu$, we repeat the proof of Theorem \ref{energy decay}, therefore
 for a dyadic sequence $\{\tau_j\}_{j\in \Naturals}$,
\begin{equation}\label{improved-nondeg-Integrated decay estimate-T-2}
\begin{split}
&\int_{\Sigma^\prime_{\tau_{j+1}} \cup \N_{\tau_{j+1}}} P^N_\mu(T\solu) n^\mu \lesssim \\
  &\frac{1}{\tau_j^2} \left(\int_{\N_{\tau_j}}  \sum_{k\leq2} r^{2} |T^k\pv\Solu|^2  \di v \di \sigma_{S^2} + \int_{\Sigma^\prime_{\tau_j} \cup \N_{\tau_j}} \sum_{k\leq3}  P^N_\mu(T^k\solu) n^\mu \right).
  \end{split}
\end{equation}
As a result of \eqref{improved-nondeg-Integrated decay estimate-T-1} and Theorem \ref{energy decay}, we use the pigeon-hole principle for \eqref{improved-nondeg-Integrated decay estimate-T-2}, and \eqref{improved-nondeg-Integrated decay estimate-t} follows.

\end{proof}

Based on the improved first order energy decay, we can improve the pointwise decay.
\begin{theorem}[Improved Interior Pointwise Decay]\label{Improved interior first order pointwise}
Let $R>3M$ and $\solu$ be a solution of Regge-Wheeler \eqref{RW} or Zerillir equation \eqref{Zerilli}, with initial data on $\Sigma^\prime_{\tau_0}\cup \N_{\tau_0}$ satisfying 
\begin{equation}\label{initial data-improved decay-1}
\begin{split}
I=& \int_{\N_{\tau_0}}    \sum_{k+l\leq8,l\leq 4}  |T^k \Omega^l  (r\pv)^j\solu|^2r^2 \di v\di \sigma_{S^2}\\
 +&\int_{\Sigma^{\prime}_{\tau_0}\cup \N_{\tau_0}}   \sum\limits_{k+l\leq9,l\leq4} P^N_\mu(T^k\Omega^l \solu) n^\mu<\infty.
\end{split}
\end{equation}
Then we have in the future development of initial hypersurface
\begin{equation}\label{improved-pointwise decay}
 \quad  r^{\frac{1}{2}}|\partial_t\solu| (\tau,r) \lesssim \frac{I}{\tau^2},\,\,\, \text{in} \,\,\,  J^{+}(\Sigma^\prime_{\tau_0}\cup \N_{\tau_0}).
\end{equation}
and the improved interior decay estimate, 
\begin{equation}\label{improved-pointwise decay-pt}
|\solu| \lesssim \frac{I}{\tau^{\frac{3}{2}}}, \,\,\text{for} \,\, r<R.
\end{equation}

\end{theorem}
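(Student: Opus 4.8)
The plan is to establish the two estimates \eqref{improved-pointwise decay} and \eqref{improved-pointwise decay-pt} in turn, feeding the improved first-order energy decay \eqref{improved-nondeg-Integrated decay estimate-t} of Corollary~\ref{Improved interior first order energy decay} into the Sobolev machinery already used in Theorem~\ref{them-uni-pointwise decay}. For \eqref{improved-pointwise decay} I would simply rerun that proof with $\dt\solu$ in place of $\solu$. Since $\dt=T$ and the angular Killing fields $\Omega$ commute with the Regge-Wheeler and Zerilli operators, commuting with $\Omega^l$ for $l\leq2$ and applying Corollary~\ref{Improved interior first order energy decay} to $\Omega^l\dt\solu$ gives $\int_{\N_\tau}\sum_{l\leq2}P^N_\mu(\Omega^l\dt\solu)n^\mu\lesssim I/\tau^4$, where the data norm $I$ in \eqref{initial data-improved decay-1} supplies the required $l\leq4$ angular and $k\leq9$ total derivatives. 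Integrating $(r^{1/2}\dt\solu)^2$ from $\scri$ along $\N_\tau$ and using the Sobolev inequality on $S^2$ exactly as in \eqref{Sobolev-v}, together with the same Sobolev-plus-redshift treatment of $\Sigma_\tau\cap\{r_0\leq r\leq R\}$ and of the near-horizon piece $\Sigma^i_\tau$, yields $r^{1/2}|\dt\solu|\lesssim I/\tau^2$.

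The estimate \eqref{improved-pointwise decay-pt} is the substantive one, and the key observation is that the naive route fails: integrating the pointwise bound $|\dt\solu|\lesssim\tau^{-2}$ in $t$ from $\tau$ to $\infty$ only reproduces the rate $\tau^{-1}$, so the extra half power cannot come from the pointwise $T$-decay alone. Instead I would reduce \eqref{improved-pointwise decay-pt} to an \emph{enhanced local energy decay}
\[
\int_{\Sigma^\prime_\tau} P^N_\mu(\Omega^l\solu)\, n^\mu \lesssim \frac{I}{\tau^{3}}, \qquad l\leq 2,
\]
which is faster than the global rate $\tau^{-2}$ of Theorem~\ref{energy decay} precisely because energy is radiated off to $\scri$. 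Granting this, a purely spatial Sobolev embedding $\|\solu\|_{L^\infty(\Sigma^\prime_\tau)}\lesssim\|\solu\|_{H^2(\Sigma^\prime_\tau)}$ closes the argument: the zeroth- and first-order parts of $\|\solu\|_{H^2}$ are bounded by $(\int_{\Sigma^\prime_\tau}P^N_\mu(\solu)n^\mu)^{1/2}\lesssim\tau^{-3/2}$, the tangential second derivatives by the $l=2$ enhanced estimate, and the radial second derivative $\partial_r^2\solu$ is traded, via the wave equation, for $\frac{1}{1-\mu}\dt^2\solu$ plus lower order; since commuting $T$ twice and reapplying Corollary~\ref{Improved interior first order energy decay} gives $E^N(T^2\solu)\lesssim I/\tau^4$, one has $\|T^2\solu\|_{L^2(\Sigma^\prime_\tau)}\lesssim\tau^{-2}$, so $\|\partial_r^2\solu\|_{L^2(\Sigma^\prime_\tau)}\lesssim\tau^{-3/2}$ as well. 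Hence $\|\solu\|_{H^2(\Sigma^\prime_\tau)}\lesssim\tau^{-3/2}$ and \eqref{improved-pointwise decay-pt} follows.

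The enhanced local energy decay is therefore the main obstacle. To obtain it I would apply the non-degenerate integrated decay estimate (Corollary~\ref{nondeg-Integrated decay estimate}) on a dyadic slab $[\tau,2\tau]$, whose right-hand side is already controlled by $E^N(\solu)+E^N(T\solu)\lesssim\tau^{-2}$, to get $\int_\tau^{2\tau}\int_{\Sigma^\prime_t}P^N_\mu(\solu)n^\mu\,dt\lesssim\tau^{-2}$; a pigeonhole then produces a slice on which the local energy is $\lesssim\tau^{-3}$. The delicate step is propagating this good slice to \emph{every} $\tau$: one uses the energy identity on the region between two interior slices, where the flux through $\{r=R\}$ carries a favourable sign, but the bulk error from the potential and the commutators must also be shown to be $O(\tau^{-3})$. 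This is exactly where the endpoint $p=2$ of the first-order $r^p$ inequality (Lemma~\ref{lema-rp-hierarchy-1}, and its high-order form Corollary~\ref{rp-hierarchy-n}) is essential: measuring the error terms against the faster-decaying first-order quantities $\mathbb{D}\solu$, rather than against the bare $T$-energy, is what forces the clean rate $\tau^{-3}$ (hence $\tau^{-3/2}$) instead of the $\tau^{-3/2+\delta}$ that $p<2$ would yield. The coercivity of the spatial bilinear form for modes $\ell\geq2$ (Remark~\ref{rk:mode}, already used in the proof of Theorem~\ref{uniform-bound}) is what guarantees that the relevant elliptic and Poincaré inequalities used throughout have no kernel.
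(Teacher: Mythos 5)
Your treatment of \eqref{improved-pointwise decay} matches the paper's: it rereads the proof of Theorem \ref{them-uni-pointwise decay} for $\dt\solu$, feeding in Corollary \ref{Improved interior first order energy decay} (commuted with $\Omega^l$, $l\leq2$, which the data norm \eqref{initial data-improved decay-1} accommodates), and there is no issue there.

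Your route to \eqref{improved-pointwise decay-pt}, however, has a genuine gap at its central step. You reduce everything to an enhanced local energy decay $\int_{\Sigma^\prime_\tau}P^N_\mu(\Omega^l\solu)n^\mu\lesssim I\tau^{-3}$, obtain one good dyadic slice by pigeonholing the Morawetz slab estimate (that part is fine), and then propose to propagate it to every $\tau$ via the energy identity between interior slices, asserting that the flux through $\{r=R\}$ "carries a favourable sign". It does not: the interior energy is not monotone, because the $T$- (or $N$-) flux through the timelike cylinder $\{r=R\}$ is of the form $\int \dt\solu\,\p_r\solu\,\di t\,\di\sigma_{S^2}$ and is sign-indefinite — radiation from the wave zone re-enters $\{r\leq R\}$. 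Nor can you beat it quantitatively with the estimates in hand: on $\{r=R\}$ the available rates are $|\dt\solu|\lesssim\tau^{-2}$ but only $|\p_r\solu|\lesssim\tau^{-1}$ (the hierarchy improves $T$-derivatives only), so the flux over a dyadic slab $[\tau,2\tau]$ is merely $O(\tau^{-2})$, one full power short of the $O(\tau^{-3})$ you need; integrating by parts in $t$ does not help, since the endpoint terms $|\solu\,\p_r\solu|$ are also only $O(\tau^{-2})$. The paper neither proves nor needs any $\tau^{-3}$ energy statement. Instead it runs the interpolation pointwise at fixed $r$: from $\p_t(r\solu^2)=2r\solu\,\dt\solu$ and Cauchy--Schwarz with dyadic weights,
\begin{equation*}
r\solu^2(t_1,r)\lesssim r\solu^2(t_0,r)+t_0^{-1}\int_{t_0}^{t_1} r\solu^2\,\di t+t_0\int_{t_0}^{t_1} r(\dt\solu)^2\,\di t,
\end{equation*}
and the two time integrals are bounded, after a spatial integration out to $\{r=R\}$ plus spherical Sobolev as in \eqref{interp-integrate-r}--\eqref{interp-integrate-basic-ineq-1}, by local (integrated) energies decaying like $I\tau^{-2}$ and $I\tau^{-4}$ (Theorem \ref{energy decay} and Corollary \ref{Improved interior first order energy decay}), so both error terms are $I\tau^{-3}$; a pigeonholed good slice in each dyadic interval, with \eqref{interp-decay-initial-t} as base case, then yields $r\solu^2\lesssim I\tau^{-3}$ for all $\tau$, with a separate $u$-integration \eqref{interp-integrate-u} handling $2M\leq r<r_0$. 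The point you are missing is that in this scheme the only "propagation" is the scalar fundamental theorem of calculus, whose errors are exactly the two controlled time integrals, so no boundary flux through $\{r=R\}$ ever has to be signed or estimated. A secondary flaw in your closing step: trading $\p_r^2\solu$ for $(1-\mu)^{-1}\dt^2\solu$ via the equation degenerates as $r\to 2M$, so your $H^2(\Sigma^\prime_\tau)\hookrightarrow L^\infty$ argument would fail on the near-horizon part of $\Sigma^\prime_\tau$ without redshift commutation with $\hat{Y}$.
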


\begin{proof}
In Theorem \ref{them-uni-pointwise decay}, we had used the Sobolev inequalities to prove the pointwise decay estimate:
$r^{\frac{1}{2}}  |\solu| \lesssim \frac{I}{\tau}$ with $I$ being a constant depending on the initial data \eqref{initial data-pointwise}.
Similarly, based on the improved first order energy decay (Corollary \ref{Improved interior first order energy decay}), we have $r^{\frac{1}{2}}|\partial_t\solu| \lesssim \frac{I}{\tau^2}$,  where $I$ is a constant depending on the initial data \eqref{initial data-improved decay-1}.

Next we interpolate between $\solu$ and $\partial_t\solu$ to improve the pointwise decay for $|\solu|$ \cite{S-13}.
The basic observation underlying this argument is that for $t_1>t_0$
\begin{equation}\label{interp-integrate-t}
\begin{split}
&r \psi^2(t_1, r) = r \psi^2(t_0, r) + \int_{t_0}^{t_1} 2 \psi \dt\psi r \di t\\
\leq & r \psi^2(t_0, r) + t^{-1}_0  \int_{t_0}^{t_1} \psi^2(t,r) r \di t +  t_0  \int_{t_0}^{t_1}  (\dt\psi)^2(t,r) r \di t.
\end{split}
\end{equation}
For $r_0<r<R$, 
\begin{equation}\label{interp-integrate-r}
r \psi^2(t, r) \lesssim  R \psi^2(t, R)+ \int_{r^\ast}^{R^\ast} \psi^2(t, r)  \di r^\ast +   \int_{r^\ast}^{R^\ast} (\p_{r^\ast}\psi)^2(t,r) r^{2} \di r^\ast.
\end{equation}
Thus, integrating on $t$ and using the Sobolev inequality on the sphere, we have
\begin{equation}\label{interp-integrate-basic-ineq}
\begin{split}
\int_{t_0}^{t_1} r\psi^2(t, r) \di t &\lesssim \int_{t_0}^{t_1} \di t  \int_{S^2} \di \sigma_{S^2} \sum_{l\leq1} R(\Omega^l \psi)^2(t,R)  \\
& +\int_{t_0}^{t_1}\di t \int_{r^\ast}^{R^\ast} \di r^\ast \int_{S^2} \di \sigma_{S^2} \sum_{l\leq1} r^{2}  \left( \frac{(\Omega^l \psi)^2}{r^2} + (\p_{r^\ast} \Omega^l \psi)^2 \right) .
\end{split}
\end{equation}
That is,
\begin{equation}\label{interp-integrate-basic-ineq-1}
\int_{t_0}^{t_1} r\psi^2(t, r) \di t \lesssim \int_{\Sigma^\prime_{t_0}} \sum_{k, l\leq1} P^T(T^k\Omega^l \psi).
\end{equation}
Letting $\bar{t}_1 = 2\bar{t}_0$, by Theorem \ref{energy decay}, there exists  $t^\prime_0 \in (\bar{t}_0,  \bar{t}_1)$ such that
\begin{equation}\label{interp-decay-initial-t}
 r\psi^2(t^\prime_0, r) \lesssim \frac{I}{t^{\prime3}_0}.
\end{equation}
Now considering a dyadic sequence $\{t^\prime_{j}\}_{j \in \mathbb{N}}$ with $t^\prime_{j+1}=2t^\prime_j,\, j\geq 0$, as an application of \eqref{interp-integrate-basic-ineq-1}, we have
\begin{equation}\label{interp-integrate-basic-ineq-1-dt}
\begin{split}
\int_{t^\prime_j}^{t^\prime_{j+1}} r(\psi)^2(t, r) \di t &\lesssim \int_{\Sigma^\prime_{t_j}} \sum_{k, l\leq1} P^T(T^k\Omega^l \psi), \\
\int_{t^\prime_j}^{t^\prime_{j+1}} r(\dt\psi)^2(t, r) \di t &\lesssim \int_{\Sigma^\prime_{t_j}} \sum_{k, l\leq1} P^T(T^k\Omega^l \dt\psi).
\end{split}
\end{equation}
In regard of Theorem \ref{energy decay} and Corollary \ref{Improved interior first order energy decay},  \eqref{interp-integrate-t} yields that
\begin{equation}\label{interp-integrate-t-decay}
r \psi^2(t^\prime_{j+1}, r) \lesssim r \psi^2(t^\prime_{j}, r) + \frac{1}{t^\prime_j} \frac{I}{(t^\prime_j)^2}  +  t^\prime_j \frac{I}{(t^\prime_j)^4}.
\end{equation}
By induction on $j\in \mathbb{N}$ and noting that  \eqref{interp-decay-initial-t} for $j=0$, we have
\begin{equation}\label{interp-integrate-t-decay-final}
r \psi^2(t^\prime_{j}, r) \lesssim \frac{I}{(t^\prime_j)^3}, \quad j\in \mathbb{N} \cup \{0\}.
\end{equation}
The pigeon-hole principle will give the conclusion.

For $2M\leq r < r_0$, the same interpolation \eqref{interp-integrate-t} by integration along lines of constant radius $r< r_0$ can be
carried out. While \eqref{interp-integrate-r} could be replaced by integration on $v=\frac{1}{2} (t+r_0^\ast)$,
\begin{equation}\label{interp-integrate-u}
\psi^2(t, u) \lesssim  \psi^2(t, u_0)+ \int_{u_0}^{u} (1-\mu)\psi^2 \di u +   \int_{u_0}^{u} \frac{(\p_u \psi)^2}{1-\mu} \di u.
\end{equation}
where $u_0=\frac{1}{2} (t- r_0^\ast)$. 
The above proof could be extended to $2M\leq r < r_0$ by replacing  $P^T$ by $P^N$ on the right hand sides of \eqref{interp-integrate-basic-ineq-1}, \eqref{interp-integrate-basic-ineq-1-dt}.
\end{proof}
\begin{remark}
The argument in proving Theorem \ref{Improved interior first order pointwise} does not hold for $R \rightarrow \infty.$ Otherwise, adapted to $R \rightarrow \infty,$ \eqref{interp-integrate-r} would be replaced by integrating in $v$. To mimic the procedure of deriving  \eqref{interp-integrate-basic-ineq-1} from \eqref{interp-integrate-basic-ineq}, we should take $p=0$ in the zero order $r^p$ inequality \eqref{ineq-rp-0}. But this is impossible, since we require that $p>0$ in \eqref{ineq-rp-0}.
\end{remark}

Combining with the non-degenerate high order integrated decay estimate (Corollary \ref{coro-nondeg-Integrated decay estimate-high-order}),  the weighted high order uniform boundedness (Corollary \ref{uniform-bound-high}), and the $r^p$ high order integrated decay estimate (Corollary \ref{rp-hierarchy-n}), 
we have the improved high order pointwise decay estimate by a simple induction.
\begin{corollary}[High Order Decay Estimate]\label{Improved high order interior first order pointwise}
Let $R>3M$, and define the weighted derivatives $\mathbb{D}=\{r\pv, (1-\mu)^{-1}\pu, r\nablaslash\}$.
Let $\solu$ be a solution of Regge-Wheeler \eqref{RW} or Zerilli equations \eqref{Zerilli}, with initial data on  $\Sigma^\prime_{\tau_0}\cup \N_{\tau_0}$ satisfying
\begin{equation}\label{initial data-improved decay-high-order}
\begin{split}
I=& \sum_{i\leq n} \int_{\N_{\tau_0}}    \sum_{j\leq2, k+l \leq 8 }  |\mathbb{D}^i (r\pv)^j \Omega^l T^k \solu|^2r^2 \di v\di \sigma_{S^2}\\
 +& \sum_{k+l\leq n+9}  \int_{\Sigma^{\prime}_{\tau_0}\cup \N_{\tau_0}}   P^N_\mu(T^k\Omega^l \solu) n^\mu<\infty.
\end{split}
\end{equation}
for all $n\in \Naturals$. 
Then we have the energy decay estimate,
\begin{equation}\label{decay-high-RW-teu}
\sup_{m \in \Naturals }  \int_{\N_\tau}   P^N_\alpha(\mathbb{D}^m\solu)n^\alpha +   \sup_{k,l \in \Naturals }   \int_{\Sigma^{\prime}_{\tau}}  P^N_\mu(T^k\Omega^l \solu) n^\mu \lesssim \frac{I}{\tau^2},
\end{equation}
and
\begin{equation}\label{decay-high-RW-teu-Improved}
\sup_{m \in \Naturals } \int_{\N_\tau}   P^N_\alpha(\mathbb{D}^m T\solu)n^\alpha +  \sup_{k,l \in \Naturals }   \int_{\Sigma^{\prime}_{\tau}}  P^N_\mu(T^k\Omega^l T\solu) n^\mu \lesssim \frac{I}{\tau^4}.
\end{equation}
In the future development of initial hypersurface $J^{+}(\Sigma^\prime_{\tau_0}\cup \N_{\tau_0})$, there is the pointwise decay estimate
\begin{equation}\label{improved-pointwise decay-high-RW}
\sup_{m \in \Naturals} r^{\frac{1}{2}} |\mathbb{D}^m\solu|(\tau, r) \lesssim \frac{I}{\tau}, \quad \sup_{m \in \Naturals}  r^{\frac{1}{2}} |\partial_t\mathbb{D}^m \solu|(\tau, r) \lesssim \frac{I}{\tau^2},
\end{equation}
and the improved interior decay estimate,
\begin{equation}\label{improved-pointwise decay}
 \sup_{m \in \Naturals}  |\mathbb{D}^m\solu| (\tau,r)\lesssim \frac{I}{\tau^{\frac{3}{2}}},  \,\, \text{for} \,\, r<R.
\end{equation}

\end{corollary}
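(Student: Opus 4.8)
The plan is to obtain the corollary as the high-order analogue of the already-established chain Theorem \ref{energy decay} $\to$ Corollary \ref{Improved interior first order energy decay} $\to$ Theorem \ref{them-uni-pointwise decay} $\to$ Theorem \ref{Improved interior first order pointwise}, running each of those dyadic/pigeon-hole arguments verbatim but with $\solu$ replaced by the commuted quantities $\mathbb{D}^m\solu$, $T^k\Omega^l\solu$, and $\hat{Y}^j$-derivatives, and with the order-zero integrated-decay and boundedness inputs replaced by their high-order versions. The point is that the genuinely new analytic content---the commutator structure of $\L_Z$ (and $\L_{RW}$) with the weighted null derivative $r\pv$, with $\pu$, with $\Omega$, and with the red-shift field $\hat{Y}$---is already packaged into Corollaries \ref{coro-nondeg-Integrated decay estimate-high-order}, \ref{uniform-bound-high}, and \ref{rp-hierarchy-n}. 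So the remaining work is bookkeeping plus an induction on the differential order.

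First I would prove the energy-decay estimate \eqref{decay-high-RW-teu}. For the flux on $\N_\tau$, I would feed $\mathbb{D}^m\Solu$ into the high-order $r^p$ hierarchy of Corollary \ref{rp-hierarchy-n}, using the $p=2$ and $p=1$ members exactly as in the proof of Theorem \ref{energy decay}; the interior flux appearing on the right-hand side of the $p=2$ step is controlled by the nondegenerate high-order integrated decay estimate (Corollary \ref{coro-nondeg-Integrated decay estimate-high-order}), and the two-step pigeon-hole that converts the integrated-in-$\tau$ bound into a pointwise-in-$\tau$ bound closes using the high-order uniform boundedness (Corollary \ref{uniform-bound-high}). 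For the interior piece $\int_{\Sigma'_\tau}P^N_\mu(T^k\Omega^l\solu)n^\mu$ I would commute with $T^k\Omega^l$ and invoke Corollary \ref{coro-nondeg-Integrated decay estimate-high-order} directly; the elliptic estimates and red-shift commutator already built into that corollary supply the missing $\hat{Y}$-derivatives, so all of $P^N_\alpha(\mathbb{D}^m\solu)$ is recovered. Next, \eqref{decay-high-RW-teu-Improved} is obtained by repeating Corollary \ref{Improved interior first order energy decay} with $\mathbb{D}^m\solu$ in place of $\solu$: the first-order weighted inequality used there (Lemma \ref{lema-rp-hierarchy-1}) has its high-order counterpart in Corollary \ref{rp-hierarchy-n}, so the $p=4,3$ bootstrap that upgrades $\tau^{-2}$ to $\tau^{-4}$ for $T\solu$ carries over, now with the extra weighted derivatives commuted through.

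The pointwise statements then follow by the same embeddings as in the first-order case. The uniform bounds \eqref{improved-pointwise decay-high-RW} come from integrating $\pv(r^{1/2}\mathbb{D}^m\solu)$ from infinity along $\N_\tau$ and applying the Sobolev inequality on $S^2$ to \eqref{decay-high-RW-teu} and \eqref{decay-high-RW-teu-Improved}, exactly as in Theorem \ref{them-uni-pointwise decay}; this is where the $\Omega^l$ commutations with $l\leq 4$ and the total derivative count $n+9$ in \eqref{initial data-improved decay-high-order} are consumed, two derivatives being spent on the spherical Sobolev step. The interior $\tau^{-3/2}$ estimate is produced by the interpolation \eqref{interp-integrate-t} between $\mathbb{D}^m\solu$ and $\partial_t\mathbb{D}^m\solu$, as in Theorem \ref{Improved interior first order pointwise}, using the already-proven $\tau^{-1}$ and $\tau^{-2}$ pointwise rates together with the interior energy decay; the argument near the horizon proceeds by replacing integration in $r^\ast$ with integration in $u$ and $P^T$ by $P^N$, again following Theorem \ref{Improved interior first order pointwise}.

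The main obstacle I expect is purely the combinatorial bookkeeping of the commutators: one must verify that commuting $\L$ with a string $\mathbb{D}^m$---which mixes the non-Killing weighted derivative $r\pv$, the red-shift field $\hat{Y}=(1-\mu)^{-1}\pu$ near the horizon, and the Killing fields $\Omega,T$---produces only error terms that are \emph{strictly lower order} in $\mathbb{D}$ (so the induction hypothesis applies) and that these errors are absorbable by the bulk terms $K_{p-1}$ on the left-hand sides, just as the single error terms $A_1,A_2,A_3$ were in Lemma \ref{lema-rp-hierarchy-1}. One must also confirm that the derivative budget in \eqref{initial data-improved decay-high-order}, namely $j\leq 2$, $k+l\leq 8$ with $\mathbb{D}^i$ for $i\leq n$ on the null data and $k+l\leq n+9$ on the interior data, is exactly large enough to feed both the Sobolev step and the two successive pigeon-hole iterations without running out of derivatives at any stage of the induction. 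Since the order-zero and order-one cases already exhibit the full commutator algebra and the precise derivative losses, no new phenomenon arises, and the induction is routine.
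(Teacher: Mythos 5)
Your proposal is correct and takes essentially the same route as the paper, whose entire proof is the one-sentence remark that the corollary follows ``by a simple induction'' from the nondegenerate high-order integrated decay estimate (Corollary \ref{coro-nondeg-Integrated decay estimate-high-order}), the high-order uniform boundedness (Corollary \ref{uniform-bound-high}), and the high-order $r^p$ estimate (Corollary \ref{rp-hierarchy-n}), i.e., by rerunning the arguments of Theorem \ref{energy decay}, Theorem \ref{them-uni-pointwise decay}, Corollary \ref{Improved interior first order energy decay} and Theorem \ref{Improved interior first order pointwise} on the commuted quantities. Your write-up in fact supplies more of the commutator and derivative-counting bookkeeping than the paper itself records.
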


\end{document}